\theoremstyle{plain}
\newtheorem{theorem}{Theorem}[section]
\newtheorem{proposition}[theorem]{Proposition}
\newtheorem{lemma}[theorem]{Lemma}
\newtheorem{corollary}[theorem]{Corollary}
\newtheorem{remark}[theorem]{Remark}
\newtheorem{definition}[theorem]{Definition}
\newcommand{\C}{\mathbb{C}}
\newcommand{\R}{\mathbb{R}}
\newcommand{\const}{c}
\newcommand{\eps}{\varepsilon}
\let\altphi\phi
\let\phi\varphi
\let\varphi\altphi
\let\altphi\undefined
\renewcommand{\div}{{\rm div}\,}
\begin{document}
	
	\title[Migdal's momentum loop equation]{Some rigorous remarks on Migdal's momentum loop equation}

\author[E. Bru\'e]{Elia Bru\'e}
\address{Department of Decision Sciences, Bocconi University, Via Sarfatti 25 20136 Milano (MI), Italy}
\email{elia.brue@unibocconi.it}
\author[C. De Lellis]{Camillo De Lellis}
\address{School of Mathematics, Institute for Advanced Study, 1 Einstein Dr., Princeton NJ 08540, USA}
\email{camillo.delellis@ias.edu}
	
	\begin{abstract}
    We give a rigorous mathematical treatment of some portions of the theory developed by Alexander Migdal on the momentum loop equation. 
	\end{abstract}

    \maketitle
	
	\tableofcontents
	
	\section{Introduction}
	
    We consider the Cauchy problem for the Navier-Stokes equations in $\R^3$
	\begin{equation}\label{NS}
		\begin{cases}
			\partial_t u + (u \cdot \nabla) u - \nu \Delta u + \nabla p = 0, \\
			\div u = 0, \\
            u (\cdot, 0) = u_0
		\end{cases}
		\tag{NS}
	\end{equation}
	where $u: \mathbb{R}^3 \times [0,T] \to \mathbb{R}^3$ is the velocity of the fluid flow, $p: \mathbb{R}^3 \times [0,T] \to \mathbb{R}$ is the pressure field, $\nu>0$ is the kinematic viscosity, and $u_0$ a soleinodal initial data. Here, the time interval $[0,T]$ is defined with $T > 0$ and may extend to $T = +\infty$. We will consider either classical (regular) solutions, for which the global existence in time is however a famous long-standing open problem, or the most restrictive type of weak solutions for which global existence is known, which are called ``suitable weak solutions'' in the literature after the pioneering works of Scheffer and Caffarelli, Kohn, and Nirenberg, cf. \cite{Scheffer76,Scheffer77} and \cite{caffarelli1982}. 

    \medskip

    In his works \cite{Migdal23,Migdal24,Migdal25,Migdalnuovo}, Migdal approaches the theory of decay turbulence introducing a rather innovative and interesting framework to study, roughly speaking, statistical properties of solutions of \eqref{NS}. The aim of this note is twofold: translate Migdal's work into a language which is more familiar to pure mathematicians and provide rigorous mathematical arguments for some statements. 
 
    \subsection{The momentum loop equation and the Euler ensemble} The starting point of Migdal's theory is to consider a probability measure $\mu_0$ on the space of initial data $u_0$ and ``evolve'' it according to \eqref{NS}. Ideally, we would like to have a suitable space $X$ of initial data for which the solutions of \eqref{NS} form a Lipschitz semigroup: we could then take $\mu_0$ to be concentrated on $X$ and we could push it forward over the trajectories of Navier-Stokes using the semigroup. This would define an evolving distribution of probability measures $\mu_t$, which we could interpret as a ``probabilistic solution'' of the Cauchy problem. A typical example of $\mu_0$ considered by Migdal is some Gaussian probability measure centered over a particular deterministic initial data $u_0$ modeling (small) fluctuations around $u_0$ (cf. Definition \ref{def:GaussInitialVel} for a definition in the case $u_0=0$): $\mu_t$ can be thought as evolving the fluctuations around the deterministic solution. Note, however, that unlike many other ``stochastic versions'' of Navier-Stokes considered in the mathematical literature, there is no forcing term in this picture. 
    
    Since the global uniqueness of solutions is not known (and quite likely linked to the finite-time blow-up problem for smooth solutions), the ideal scenario outlined above poses some serious challenges, in particular concerning the uniqueness of $\mu_t$. However, it is at least possible to construct a probability measure $\mu_t$ which is concentrated on ``weak trajectories'' of the Navier-Stokes equations,  cf. Theorem \ref{t:prob} in Section \ref{s:probabilistic-solutions}. This is not a new remark, as it was already noted in the seventies by a few authors, among them we point out the paper \cite{LadyVers76}, which seems to be the first to consider such setting in the framework of Leray-Hopf weak solutions. We propose in this paper a proof of the existence of $\mu_t$ which, unlike that of \cite{LadyVers76}, just uses soft arguments and the classical theory of existence of weak solutions. The argument is rather flexible and can easily incorporate extra conditions, in particular it delivers a probability measure $\mu_t$ which is concentrated on suitable weak solutions. We refer the interested reader to \cite{Hopf52,Foias72,Foias73,VishikFursikov78,VishikFursikov88,NPS13} for further works on statistical solutions to the Navier-Stokes equations with random initial data.

    \medskip

    The second step of Migdal's framework is to fix a (sufficiently regular) loop $C$ in $\mathbb R^3$ and compute a suitable imaginary exponential of the circulation of the solutions $u (\cdot, t)$ on the loop. 
    At this point Migdal translates the Navier-Stokes flow for the solution of \eqref{NS} into an equation for the loop functional in the infinite-dimensional space of all loops. The main tool to do so is using the so-called ``area-derivative'': roughly speaking the latter allows to recover the various terms in the equation for the loop functional as some sort of differential operators in the loop space applied to the loop functional. Migdal considers two possible approaches: in \cite{Migdal23,Migdal24,Migdal24b} the loop $C$ is fixed in space, while in \cite{Migdalnuovo} the loop is transported along the flow of the Navier--Stokes solution. These two perspectives naturally lead to different functionals and thus to different equations. Following Migdal's terminology, we will refer to the latter as the ``liquid loop approach'' (see the discussion in Section \ref{s:loop-equation}).

    \medskip
    
    Clearly, evaluating the circulation of $u (\cdot, t)$ over a suitably regular loop is a well-defined operation if $u (\cdot, t)$ is smooth enough, while it poses some issues when $u (\cdot, t)$ is merely a weak solution. We show in this paper that, however, suitable weak solutions of Navier-Stokes are regular enough to make sense of the circulation over loops in a good way. Moreover, if the probabilistic solution is invariant under translation (an assumption satisfied by many of the distributions $\mu_0$ of initial data considered by Migdal), then Migdal's loop equation does make sense for the $\mu_t$ constructed in this paper.  This allows us to give a ``weak sense'' to the loop equation under the latter assumption, cf. Corollary \ref{c:loop-equation}. In the case of the liquid loop equation we run into serious regularity issues for the advection of the loop, cf. Section \ref{s:liquid-functional}, and we therefore just report the ``formal computation'', cf. Section \ref{s:liquid-equation}.

    \medskip

    The next step of Migdal's theory is to introduce a suitable ``infinite-dimensional Fourier transform'' of the random variable induced by $\mu_t$: in Migdal's terminology the transformed variables are the ``momenta'' (in analogy with how momentum is defined in classical quantum mechanics) and they satisfy a transformed equation called ``momentum loop equation''.  Migdal then finds a particular family of explicit solutions of the momentum loop equation, which he calls ``Euler ensemble'': this family is what he puts forward as describing decaying turbulence, and can be thought as an ergodic hypothesis of sort.

    \subsection{Discretizations} In this paper we point out a key difficulty that a rigorous mathematical formulation of the theory encounters at the introduction of the momentum variables: while the starting object $\mu_t$ is a probability measure in physical space, we can prove that, under some minimal assumptions, the corresponding Migdal's momentum cannot be a nonnegative measure on any reasonable space of generalized functions, unless $\mu_t$ is ``trivial'' in a suitable sense (cf. Theorem \ref{t:non-existence}). In particular a rigorous definition of Migdal's momentum in this infinite-dimensional setting seems to require a theory of generalized functions over an infinite-dimensional space.

    A possible way around this obstacle is to suitably discretize the problem, as suggested by Migdal himself in \cite{Migdal24}. More precisely, we can fix a large number $N$ and look at polygonal loops with $N$ vertices. This (sub)space is now finite-dimensional and the loop functional computed on these loops induces, through $\mu_t$, a random variable over it. We can therefore take a classical Fourier transform of this random variable, gaining a ``discretized Migdal momentum'', cf. Section \ref{subsec:discretizedmomentum} (the latter is typically not a probability measure either, however it is a well-defined object). In the discretized setting of polygonal loops it is possible to define differential operators which approximate the operators used by Migdal in the infinite-dimensional loop space. In fact the work \cite{Migdal24} proposes a suitable discretization, studies its Fourier transform, and then derives a corresponding special family of solutions which, in the limit as $N\uparrow \infty$, recovers the Euler ensemble. Following Migdal, we will refer to these families (depending on the parameter $N$) as ``discretized Euler ensembles''.

    \medskip

    As it is typical in discretizations of infinite-dimensional problems, there is a variety of choices on how to perform them. In this paper we show that there is at least one choice for which, if we assume enough regularity on the Navier-Stokes trajectories over which $\mu$ is concentrated, we can prove the existence of an approximate equation satisfied by the discretized loop functional. In particular the approximate equation is satisfied up to error terms which can be bounded with suitable positive powers of $\frac{1}{N}$ and suitable norms of the solution, cf. Theorem \ref{t:loop-equation-discretized-2}. A technical difference with Migdal's computations is that we use use an appropriate truncation of the Biot-Savart operator to be able to define certain objects. This, however does not affect the overall results. A more substantial difference with Migdal's original computations in \cite{Migdal24} is the presence of a vanishing normalizing logarithmic factor in front of one of the approximate operators, the velocity operator in Section \ref{subsec:vel_op}, see Theorem \ref{t:loop-equation-discretized-2}. This correction was suggested to us by Migdal after we discussed our computations and it appeared first in his subsequent work \cite{Migdal24b}. In this paper we show that the correction must be taken into account even in Migdal's approximation, cf. \eqref{eq:vel_op Migdal} and Lemma \ref{lemma:vel_op Migdal}. Migdal in \cite{Migdal24} shows that the correction does not affect the computations on the discretized Euler ensemble and we can confirm this, cf. Section \ref{s:discussione}. 
    
    The remaining difference, which seems of technical nature, has already been underlined and is in the choice of the discretized operators. With the choice proposed in this paper it is easier to control the error terms in the approximate equation. If we follow the discretization proposed by \cite{Migdal24} we can still rigorously derive that all but one of the discretized operators used there behave as expected in \cite{Migdal24}. We are in fact able to bound the error terms arising from the discretized vorticity, velocity, and diffusion operators as defined by Migdal, cf. Lemma \ref{l:exponential-computation-2}, \ref{l:diffusion migdal} , and \ref{lemma:vel_op Migdal}. His definition of the discretized advection operator generates instead some error terms which, at present, we are unable to control, cf. Sections \ref{s:advection}. In the latter case our investigations are so far inconclusive: while we are unable to control these error terms, we are not able to show that they are large either.

    \medskip

    In \cite{Migdalnuovo} Migdal proposes a different derivation of the discretized Euler ensemble, using instead the liquid loop equation. In this derivation he avoids the advection term in the corresponding equation for the Euler ensemble and he asserts that the discretization would then rely solely on the vorticity and diffusion operators. It can be easily checked, cf. Section \ref{s:discussione} that the discretized Euler ensemble then also satisfies the corresponding discretized equation. This suggests therefore two interesting further directions: 
    \begin{itemize}
        \item solve the technical difficulties and handle the more intricate error terms arising from Migdal's descretized advection operator and derive his discrete approximation of the momentum loop equation;
        \item revert to a discretization of the liquid loop equation and possibly avoid the problematic error terms all together.
    \end{itemize}

\subsection{Notation}
Throughout the manuscript, we write $\const$ for a universal constant, 
whose value may change from line to line. 
If a constant depends only on certain parameters $a,b, \ldots $, 
we indicate this by writing $\const(a,b, \ldots)$.

\subsection{Acknowledgments} Obviously so much of this paper owes to our numerous conversations with Sasha Migdal and his constant interaction with us in the last couple of years. 

Part of this work was carried out during a visit of EB to the IAS, he gratefully acknowledges the support of the National Science Foundation under Grant No. DMS-2424441. The research of CDeL has been supported by the National Science Foundation through the DMS-2350252 grant and by the Simons Initative on the Geometry of Flows (Grant Award ID BD-Targeted-00017375, CDeL).

\section{Probability measures on trajectories of Navier-Stokes}\label{s:probabilistic-solutions}

Throughout this section, we let $\Omega$ denote either $\mathbb{R}^3$ or the torus $\mathbb{T}^3 := \mathbb{R}^3/\mathbb{Z}^3$. We denote by $L^2_\sigma(\Omega)$ the space of square-integrable, divergence-free vector fields on $\Omega$. For simplicity, we will often omit the dependence on $\Omega$ and write $L^2_\sigma$. We will give all our arguments in the case of $\mathbb R^3$, as the ones on the periodic torus are minor modifications (and in fact are typically easier given that the only harmonic functions on the torus are the constants). 

\medskip
We are interested in global solutions to \eqref{NS} with a random initial velocity field $u_0 \in L^2_\sigma$, sampled from a Borel probability measure $\mu_0 \in \mathcal{P}(L^2_\sigma)$. Since $L^2_\sigma$ is separable, the $\sigma$-algebra generated by the open sets of the weak topology coincides with that generated by the strong topology, so there is no ambiguity in the notion of Borel measurability. 

The main result of this section is that, for any such initial distribution, one can construct a global \textit{random admissible solution} to \eqref{NS}. Before stating the main theorem, we introduce some notation. 

\begin{definition}\label{d:space}
	Let $I$ be an interval (possibly finite or infinite, open or closed). We define $C_w(I, L^2(\Omega))$ as the space of measurable functions $u : \Omega \times I \to \mathbb{R}^3$ which belong to $L^\infty (I, L^2_\sigma (\Omega))$ and such that $t\mapsto u (\cdot, t)$ is continuous in the weak topology of $L^2$.
	Functions are identified if they differ only on a set of (space-time) measure zero.
\end{definition}

The following is a standard characterization of $C_w (I, L^2_\sigma (\Omega))$, whose simple proof is left as an exercise for the reader.

\begin{lemma}\label{l:weak-continuity}
	A function $u \in C_w(I, L^2_\sigma)$ if and only if the following two properties hold:
    \begin{itemize}
    \item[(i)] The map $t \mapsto \|u(\cdot, t)\|_{L^2}$ belongs to $L^\infty(I)$;
    \item[(ii)] for every divergence-free test function $\varphi \in C^\infty_c(\Omega)$, the map
	\begin{equation}\label{e:def-Phi}
		t \mapsto \Phi(t) := \int_\Omega \varphi(x) \cdot u(x,t)\, dx
	\end{equation}
	agrees almost everywhere with a continuous function on $I$.
    \end{itemize}
	Moreover, for every $t_0 \in I$, the time average
	\[
	r \mapsto \fint_{I \cap (t_0 - r, t_0 + r)} u(x,s)\, ds
	\]
	converges weakly in $L^2$ as $r \to 0$. This defines a well-posed trace of $u$ on $\Omega \times \{t\}$, which coincides with the original $u$ for almost every $(x,t)$. Furthermore, if this trace is used in the pointwise definition of $\Phi$ in~\eqref{e:def-Phi}, then $\Phi$ is continuous at every $t \in I$.
\end{lemma}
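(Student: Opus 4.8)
The plan is to prove the two implications separately and to read off the ``moreover'' part directly from the construction carried out for the nontrivial direction.

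The implication from $u\in C_w(I,L^2_\sigma)$ to (i)--(ii) is straightforward. I would fix the weakly continuous representative $\tilde u$ guaranteed by the definition, note that (i) follows at once from the membership $u\in L^\infty(I,L^2_\sigma)$, and observe that $t\mapsto\int_\Omega\varphi\cdot\tilde u(\cdot,t)\,dx$ is continuous by weak continuity. Since $\tilde u=u$ a.e.\ in space-time, Fubini's theorem shows that for a.e.\ $t$ one has $\tilde u(\cdot,t)=u(\cdot,t)$ in $L^2$, so this continuous function agrees almost everywhere with $\Phi$, giving (ii).

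The substance is in the converse, and here the idea is to manufacture the weakly continuous representative by time averaging. Assuming (i)--(ii), I set $M:=\|u\|_{L^\infty(I,L^2)}$, fix $t_0\in I$, and consider the averages $v_r:=\fint_{I\cap(t_0-r,t_0+r)}u(\cdot,s)\,ds$, read as Bochner integrals in $L^2_\sigma$. Each $v_r$ stays in the closed subspace $L^2_\sigma$ with $\|v_r\|_{L^2}\le M$, and for every divergence-free $\varphi\in C^\infty_c(\Omega)$ one has
\[
\int_\Omega\varphi\cdot v_r\,dx=\fint_{I\cap(t_0-r,t_0+r)}\Phi(s)\,ds\longrightarrow\Phi(t_0)\qquad\text{as }r\to0,
\]
where $\Phi$ denotes the continuous representative from (ii) and the convergence is just continuity of $\Phi$ at $t_0$. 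The key step is then to upgrade this convergence of pairings against a dense family into genuine weak convergence: since the $v_r$ are uniformly bounded in $L^2$ and divergence-free test functions are dense in $L^2_\sigma$, the limits $\lim_{r\to0}\int_\Omega\varphi\cdot v_r\,dx$ define a bounded linear functional on a dense subspace, represented by a \emph{unique} element $\tilde u(\cdot,t_0)\in L^2_\sigma$, and an $\eps/3$ argument gives $v_r\rightharpoonup\tilde u(\cdot,t_0)$ in $L^2$. This simultaneously establishes the weak convergence of the time averages and defines the trace, for every $t_0\in I$.

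To close the argument I would check three things. First, weak continuity of $t\mapsto\tilde u(\cdot,t)$: for divergence-free $\varphi$ the pairing equals the continuous function $\Phi$, and for a general $g\in L^2$ I would replace $g$ by its Leray projection (permissible because $\tilde u(\cdot,t)\in L^2_\sigma$) and approximate it by divergence-free test functions, controlling the error by the uniform bound $M$; this also yields the final continuity statement for $\Phi$. Second, the identity $\tilde u=u$ a.e.: by the Lebesgue differentiation theorem for the $L^2$-valued map $s\mapsto u(\cdot,s)$, almost every $t_0$ is a Lebesgue point, so there $v_r\to u(\cdot,t_0)$ strongly and hence $\tilde u(\cdot,t_0)=u(\cdot,t_0)$. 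Third, the measure-theoretic prerequisites that make the averages legitimate, namely strong measurability of $s\mapsto u(\cdot,s)$ (Pettis' theorem, using separability of $L^2$ together with the weak measurability coming from joint measurability of $u$) and the density of divergence-free test functions in $L^2_\sigma$. I expect the only delicate point to be the passage from convergence of pairings to weak convergence together with the uniqueness of the limit; everything else is soft and uses nothing beyond the uniform $L^2$-bound and the continuity of $\Phi$.
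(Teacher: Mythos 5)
The paper offers no proof of this lemma to compare against --- it is explicitly ``left as an exercise for the reader'' --- so your argument can only be judged on its own merits, and it is correct. Moreover it is clearly the intended route: the ``moreover'' clause of the statement itself telegraphs the time-averaging construction. Both halves of your argument are sound: the forward implication is soft (weakly continuous representative, Fubini to identify $\Phi$ a.e.\ with a continuous function), and in the converse every ingredient is correctly invoked --- strong measurability of $s\mapsto u(\cdot,s)$ via Pettis, the uniform bound $\|v_r\|_{L^2}\le M$, convergence of the pairings $\int_\Omega \varphi\cdot v_r\,dx=\fint \Phi(s)\,ds\to\Phi(t_0)$, the upgrade to genuine weak convergence by density of divergence-free test fields in $L^2_\sigma$ together with Riesz representation and an $\varepsilon/3$ argument, weak continuity of the trace by reduction to the Leray projection, and the a.e.\ identification $\tilde u=u$ via the Lebesgue differentiation theorem for Bochner integrals.

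One point deserves to be made explicit, though it is an imprecision inherited from the lemma's statement rather than a flaw of your proof: conditions (i)--(ii) alone do not force $u(\cdot,t)$ to take values in $L^2_\sigma$. For instance $u(x,t)=\nabla\phi(x)$ with $\phi\in C^\infty_c(\Omega)$ satisfies (i), and satisfies (ii) with $\Phi\equiv 0$, yet belongs to no class of $C_w(I,L^2_\sigma)$. Your step ``each $v_r$ stays in the closed subspace $L^2_\sigma$'' silently uses the ambient hypothesis that $u(\cdot,s)\in L^2_\sigma$ for a.e.\ $s$; you should state this hypothesis (it is plainly what the authors intend), because without it the Riesz representative $\tilde u(\cdot,t_0)$ produced by your construction is only the Leray projection of the Lebesgue-point value of $u$, and the identification $\tilde u=u$ a.e.\ --- hence the conclusion $u\in C_w(I,L^2_\sigma)$ --- would fail.
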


Thanks to Lemma~\ref{l:weak-continuity}, we can define the evaluation map at time $t$ on the space $C_w(I, L^2)$.

\begin{definition}
	For any $t \in I$, the \emph{evaluation map}
	\[
	e_t : C_w(I, L^2) \to L^2
	\]
	is defined by $e_t u := u(\cdot, t)$, where the right-hand side denotes the trace of $u$ at time $t$ as defined in Lemma~\ref{l:weak-continuity}.
\end{definition}

\begin{theorem}[Probabilistic Solutions]\label{t:prob}
	Let $\mu_0$ be a Borel probability measure on $L^2_\sigma(\Omega)$. Then there exists a Borel probability measure $\mu$ on the space $C_w(\mathbb{R}_+, L^2_\sigma)$ such that:
	\begin{itemize}
		\item[(a)] $\mu$ is supported on the set of admissible weak solutions to \eqref{NS} with $T = \infty$ (see Definition~\ref{d:admissible});
		
		\item[(b)] the map $t \mapsto \mu_t := (e_t)_\sharp \mu$ is continuous in the weak$^*$ topology;
		
		\item[(c)] $(e_0)_\sharp \mu = \mu_0$.
	\end{itemize}
\end{theorem}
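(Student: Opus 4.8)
The plan is to realize $\mu$ as a weak$^*$ limit of measures that are, \emph{by construction}, already supported on genuine admissible weak solutions, so that the only analytic input is the classical existence-and-compactness theory for a single initial datum. First I would approximate $\mu_0$ in the weak$^*$ topology by a uniformly tight sequence of finitely supported measures $\mu_0^k = \sum_i w_i^k \delta_{v_i^k}$ with $v_i^k \in L^2_\sigma$; such measures are weak$^*$ dense because $L^2_\sigma$ is Polish. For each of the finitely many atoms $v_i^k$ I invoke the classical theory to select one global admissible weak solution $u_i^k$ of \eqref{NS} (no measurable selection is needed here, precisely because there are finitely many atoms), and I set $\mu^k := \sum_i w_i^k \delta_{u_i^k}$, a Borel probability measure on the trajectory space with $(e_0)_\sharp \mu^k = \mu_0^k$ by construction.

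The heart of the matter is the choice of topology $\tau$ on the trajectory space. I would fix a countable family $\{\varphi_j\}$ of divergence-free test fields, dense in the relevant sense, and declare $u^n \to u$ in $\tau$ when $t \mapsto \langle \varphi_j, u^n(t)\rangle \to \langle \varphi_j, u(t)\rangle$ uniformly on every compact subinterval of $\R_+$ for each $j$, together with $u^n \to u$ strongly in $L^2_{loc}((0,\infty)\times \R^3)$. This $\tau$ is metrizable, and by Lemma~\ref{l:weak-continuity} each evaluation map $e_t$ is $\tau$-continuous into $L^2_\sigma$ endowed with its weak topology. The two classical facts I would import are: (i) by the Leray a priori estimates combined with the Aubin--Lions lemma, the set of admissible weak solutions with $\sup_t \|u(\cdot,t)\|_{L^2} \le R$ is $\tau$-compact; and (ii) the full set $\mathcal S$ of admissible weak solutions (Definition~\ref{d:admissible}) is $\tau$-closed, since along a $\tau$-convergent sequence the linear terms in the weak formulation of \eqref{NS} pass to the limit by the uniform weak convergence, the quadratic term $\int (u^n\otimes u^n):\nabla\psi$ passes by the strong $L^2_{loc}$ convergence, and the energy inequality survives by lower semicontinuity.

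With this in place the remaining steps are soft. Given $\eps>0$, uniform tightness of the $\mu_0^k$ yields $R$ with $\mu_0^k(\{\|u_0\|_{L^2}>R\}) < \eps$ for all $k$; since admissibility forces $\|u(\cdot,t)\|_{L^2} \le \|u(\cdot,0)\|_{L^2}$, the mass of $\mu^k$ outside the $\tau$-compact energy-$R$ slice is $<\eps$ uniformly in $k$. Hence $\{\mu^k\}$ is tight and, by Prokhorov, a subsequence converges weakly$^*$ to a Borel probability measure $\mu$. Because each $\mu^k$ is supported on the $\tau$-closed set $\mathcal S$, the Portmanteau inequality $1 = \limsup_k \mu^k(\mathcal S) \le \mu(\mathcal S)$ forces $\mu(\mathcal S)=1$, giving (a). The $\tau$-continuity of $e_0$ gives $(e_0)_\sharp \mu = \lim_k (e_0)_\sharp \mu^k = \lim_k \mu_0^k = \mu_0$, which is (c). For (b), if $t_n \to t$ then for every bounded weakly continuous $G$ one has $G(u(t_n)) \to G(u(t))$ for $\mu$-a.e.\ trajectory $u$ by its weak time-continuity, so dominated convergence yields $\int G\, d\mu_{t_n} \to \int G\, d\mu_t$.

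The main obstacle is exactly the analytic content hidden in $\tau$: one must work in a topology strong enough to pass to the limit in the nonlinearity yet weak enough to retain tightness. The genuinely nontrivial ingredient is the strong space-time $L^2_{loc}$ compactness supplied by Aubin--Lions, which is what makes both the closedness of $\mathcal S$ and the compactness of its bounded-energy slices work; packaging this so that the trajectory space is a metric space on which Prokhorov applies, and checking $\tau$-continuity of the $e_t$ and $\tau$-closedness of $\mathcal S$, is where the care lies. Everything surrounding it is elementary measure theory, which is what allows the argument to avoid the heavier measurable-selection machinery of \cite{LadyVers76}.
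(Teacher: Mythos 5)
Your overall architecture (atomic approximation of $\mu_0$ plus Prokhorov, instead of the paper's regularization of the equation) is viable, but the two facts you import as ``(i)'' and ``(ii)'' are not true as stated, and this is a genuine gap rather than a technicality. The set $\mathcal S$ of admissible weak solutions is not known to be $\tau$-closed: along a $\tau$-convergent sequence $u^n\to u$ the initial data converge only \emph{weakly} in $L^2$, so while the left-hand side of the energy inequality \eqref{e:energy-2} is weakly lower semicontinuous, the right-hand side $\|u^n(\cdot,0)\|_{L^2}^2$ is also only lower semicontinuous --- which is the wrong direction: one gets $\|u(\cdot,t)\|_{L^2}^2+2\int_0^t\|Du(\cdot,s)\|_{L^2}^2\,ds\le\liminf_n\|u^n(\cdot,0)\|_{L^2}^2$, and this does not compare with $\|u(\cdot,0)\|_{L^2}^2$, which may be strictly smaller when $u^n(\cdot,0)\rightharpoonup u(\cdot,0)$ without strong convergence. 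Hence ``the energy inequality survives by lower semicontinuity'' is precisely the step that fails. The same defect invalidates (i): Aubin--Lions gives \emph{pre}compactness of the energy-$R$ slice, but compactness requires the limit to satisfy \eqref{e:energy-2}, i.e.\ it again requires the closedness you have not established. Since both the tightness of $\{\mu^k\}$ and the Portmanteau step $\mu(\mathcal S)=1$ funnel through these two claims, conclusion (a) is not proved. This is exactly why the paper's Lemma \ref{l:compact} assumes that $u_k(\cdot,0)$ converges \emph{strongly} in $L^2$, and why the paper's proof never works with the full set $\mathcal S$, but only with sets of trajectories emanating from a strongly compact set $K$ of initial data supplied by the tightness of $\mu_0$.

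The repair stays within your framework but must import the paper's device: choose the atoms of $\mu_0^k$ inside strongly compact sets $K_j\subset L^2_\sigma$ with $\mu_0(K_j^c)<1/j$, carrying total weight at least $1-1/j$ (possible by tightness of $\mu_0$ and a diagonal argument), and apply Portmanteau not to $\mathcal S$ but to the sets $\widehat K_j:=\{u\in\mathcal S:\ u(\cdot,0)\in K_j\}$. Each $\widehat K_j$ \emph{is} $\tau$-compact: within it, initial data sub-converge strongly, and then Lemma \ref{l:compact} (Aubin--Lions together with strong convergence of the data) shows that the $\tau$-limit is again an admissible solution with datum in $K_j$. Then $\mu(\mathcal S)\ge\mu(\widehat K_j)\ge\limsup_k\mu^k(\widehat K_j)\ge 1-1/j$ for every $j$, which gives (a). With that modification your argument goes through and is a genuinely different (and arguably softer) route than the paper's: the paper regularizes the equation via Leray's mollified semigroup while keeping the initial law exactly $\mu_0$ at every stage, whereas you keep the equation and discretize the initial law, at the price of having to re-insert the strong-compactness constraint on the initial data by hand.
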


Let $I := [0, T]$. A probability measure $\mu$ on the space $C_w(I, L^2_\sigma)$ satisfying properties (a) and (b) in Theorem~\ref{t:prob} will be called a \emph{probabilistic admissible weak solution} to \eqref{NS} on the interval $I$.

\begin{remark}[Restriction of Probabilistic Solutions]
	If $I \subset \mathbb{R}_+$ is an interval, we denote by
	\[
	R_I : C_w(\mathbb{R}_+, L^2) \to C_w(I, L^2)
	\]
	the restriction map. If $\mu$ is a probabilistic admissible weak solution to \eqref{NS} as constructed in Theorem~\ref{t:prob}, then $(R_I)_\sharp \mu$ is a probabilistic admissible weak solution on the interval $I$.
\end{remark}

		\begin{definition}\label{d:admissible}
		For $T\in (0, \infty]$ the space $\mathcal{A}_T$ of admissible weak solutions of Navier-Stokes consists of the vector fields $u\in C_w ([0,T), L^2)$ satisfying the following properties:
		\begin{itemize}
			\item[(i)] $Du \in L^2 (\mathbb R^3\times I)$ (resp. $L^2 (\mathbb T^3\times I)$;
			\item[(ii)] Let $p (\cdot, t)= \frac{1}{4\pi|\cdot|} * \partial^2_{ij} ((u^i u^j) (\cdot, t)$ (following Einstein conventions) when the spatial domain is $\mathbb R^3$, or $p (\cdot, t)$ be the unique average-free function such that $-\Delta p= \partial^2_{ij} (u^i u^k) (\cdot, t)$ when the domain is $\mathbb T^3$; then $(u,p)$ is a distributional solution of 
			\begin{equation}\label{e:NS}
				\left\{
				\begin{array}{l}
					\partial_t u + (u\cdot \nabla) u + \nabla p = \Delta u \\ \\
					{\rm div}\, u =0
				\end{array}
				\right.
			\end{equation}
			and
			\begin{equation}\label{e:energy}
				(\partial_t -\Delta) \frac{|u|^2}{2} + |Du|^2 + {\rm div}\, \left(\left(\frac{|u|^2}{2} + p \right)\right) \leq 0\, ;
			\end{equation}
			\item[(iii)] The following energy estimate holds for all $t\in [0,T)$:
			\begin{equation}\label{e:energy-2}
				\|u (\cdot, t)\|_{L^2}^2 + 2\int_0^t \|Du (\cdot, s)\|_{L^2}^2 ds \leq \|u_0\|_{L^2}^2\, .
			\end{equation}
		\end{itemize}
	\end{definition}
	
	We recall that, because of (a) and $u\in C_w (I, L^2_\sigma)$, we know that $u\in L^{\frac{10}{3}}$, and from the equation for the pressure we know that $p\in L^{\frac{5}{3}}$. In particular all the products appearing in \eqref{e:energy} are well defined as locally summable functions. The following is a well-known corollary of Leray's foundational work; we refer the reader, for instance, to the arguments in the Appendix of \cite{caffarelli1982}.
	
	\begin{lemma}\label{l:compact}
		If $\{u_k\} \subset \mathcal{A}_T$ is a sequence for which $u_k(\cdot, 0)$ converges strongly in $L^2$ and $T<\infty$, then a subsequence, not relabeled, converges in the topology of $C_w (I, L^2)$ to a $u\in \mathcal{A}_T$. Moreover $u_k\to u$ locally strongly in $L^q$ for any $q<\frac{10}{3}$. 
	\end{lemma}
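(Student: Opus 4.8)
The plan is to run Leray's classical compactness scheme: extract uniform a priori bounds from Definition \ref{d:admissible}, upgrade weak to strong convergence via the Aubin--Lions lemma, and then pass to the limit in the nonlinear equation \eqref{e:NS} and in the two energy inequalities. First I would extract the uniform bounds. Since $u_k (\cdot, 0)$ converges strongly in $L^2$, the norms $\|u_k (\cdot, 0)\|_{L^2}$ are bounded, so the energy estimate (iii) gives
\[
\sup_k \Big( \sup_{t \in I} \|u_k (\cdot, t)\|_{L^2}^2 + \int_I \|D u_k (\cdot, s)\|_{L^2}^2\, ds \Big) \leq \const < \infty\, .
\]
Thus $\{u_k\}$ is bounded in $L^\infty (I, L^2) \cap L^2 (I, H^1_{loc})$; the standard interpolation inequality then bounds it in $L^{10/3} (\R^3 \times I)$, while the Calder\'on--Zygmund estimate for the pressure in (ii) bounds $\{p_k\}$ in $L^{5/3} (\R^3 \times I)$. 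Passing to a subsequence I obtain weak/weak$^*$ limits $u$ and $p$ in these spaces.

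Next I would produce strong convergence. Writing the equation as $\partial_t u_k = \Delta u_k - \div (u_k \otimes u_k) - \nabla p_k$, the three terms are bounded respectively in $L^2 (I, H^{-1})$, $L^{5/3} (I, W^{-1, 5/3})$ and $L^{5/3} (I, W^{-1, 5/3})$; after localizing to a ball $B_R$ and using Sobolev embeddings, $\{\partial_t u_k\}$ is bounded in $L^{5/3} (I, H^{-s} (B_R))$ for some $s = s(R) > 0$. Since $H^1 (B_R) \hookrightarrow\hookrightarrow L^2 (B_R) \hookrightarrow H^{-s} (B_R)$ with the first embedding compact, the Aubin--Lions lemma gives relative compactness of $\{u_k\}$ in $L^2 (I \times B_R)$. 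A diagonal argument over $R = 1, 2, \dots$ produces a single subsequence with $u_k \to u$ strongly in $L^2_{loc} (\R^3 \times I)$; interpolating this against the uniform $L^{10/3}$ bound yields $u_k \to u$ strongly in $L^q_{loc}$ for every $q < \tfrac{10}{3}$, which is the second assertion. In particular $u_k \otimes u_k \to u \otimes u$ in $L^1_{loc}$, so $u$ is a distributional solution of \eqref{e:NS}, and, by boundedness of the pressure singular integral on $L^{5/3}$, the limit $p$ is exactly the one prescribed in (ii).

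Then I would recover the inequalities and the weak continuity. The global estimate (iii) passes to the limit by weak lower semicontinuity of $\|\cdot\|_{L^2}$ on the traces and of the dissipation $\int_I \|Du_k\|_{L^2}^2$, together with strong convergence of the initial data. For the weak continuity and the convergence in $C_w (I, L^2)$, I would fix a divergence-free $\varphi \in C^\infty_c$ and test the equation to get $\tfrac{d}{dt} \int \varphi \cdot u_k = -\int D\varphi : (u_k \otimes u_k) - \int D u_k : D\varphi$, the pressure dropping since $\div \varphi = 0$; the uniform bounds make $t \mapsto \int \varphi \cdot u_k (\cdot, t)$ uniformly bounded and uniformly H\"older-$\tfrac12$ in time, so by Arzel\`a--Ascoli and a diagonal argument over a countable dense family of test fields these functions converge uniformly, which is precisely convergence in $C_w (I, L^2)$ (and, invoking Lemma \ref{l:weak-continuity}, identifies the traces of $u$).

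The main obstacle is the local energy inequality \eqref{e:energy}. Tested against a nonnegative $\phi \in C^\infty_c$ it reads, schematically, $-\int \tfrac{|u|^2}{2} (\partial_t + \Delta) \phi + \int |Du|^2 \phi - \int (\tfrac{|u|^2}{2} + p)\, u \cdot \nabla \phi \leq 0$, and the delicate point is that the cubic term $|u_k|^2 u_k$ and the pressure--velocity term $p_k u_k$ must pass to the limit (which they do, using strong $L^q_{loc}$ convergence with $q$ close to $\tfrac{10}{3}$ for the former, and the product of weak-$L^{5/3}$ and strong-$L^{5/2}_{loc}$ convergence for the latter), whereas the dissipation $\int |Du_k|^2 \phi$ can only be controlled through weak lower semicontinuity. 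These two mechanisms push in opposite directions, and the inequality survives precisely because lower semicontinuity of the dissipation carries the correct sign relative to the inequality; making this balance rigorous, and verifying the integrability of each product against $\nabla\phi$, is the crux of the argument.
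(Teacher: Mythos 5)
Your proposal is correct and is precisely the classical Leray--Caffarelli--Kohn--Nirenberg compactness argument (uniform energy bounds, Aubin--Lions with the $L^{5/3}$ pressure and time-derivative bounds, strong $L^q_{loc}$ convergence, and passage to the limit in the equation and in both energy inequalities via lower semicontinuity of the dissipation). The paper does not spell out a proof at all: it states the lemma as a well-known corollary of Leray's work and refers to the Appendix of \cite{caffarelli1982}, which is exactly the argument you have reconstructed.
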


	\subsection{Proof of Theorem \ref{t:prob}} For the sake of simplicity we assume $\Omega = \mathbb R^3$ and leave to the reader the obvious modifications in the case of $\Omega = \mathbb T^3$.
    
		For every $\varepsilon>0$, we consider Leray's regularization of the Navier-Stokes equation given by
		\begin{equation}\label{e:Leray}
			\left\{
			\begin{array}{l}
				\partial_t u + (u*\varphi_\varepsilon\cdot \nabla) u + \nabla p = \Delta u \\ \\
				{\rm div}\, u =0
			\end{array}
			\right.
		\end{equation}
		where $\varphi$ is a standard mollifier in space, $\varphi_\varepsilon (x) = \varepsilon^{-3} \varphi (\frac{x}{\varepsilon})$ and 
		\[
		u* \varphi_\varepsilon (x,t) = \int u (y,t) \varphi_\varepsilon (x-y)\, dy
		\]
		It is well known that:
		\begin{itemize}
			\item There is a unique semigroup $\Phi_\varepsilon: L^2 \to C_w ([0,\infty), L^2)$ of solutions of \eqref{e:Leray}, namely for every divergence free initial data $u_0\in L^2$ the vector field $u = \Phi_\varepsilon (u_0)$ is the unique solution of \eqref{e:Leray} with $u (\cdot, 0) = u_0$ such that in addition 
            \[
            p (\cdot, t) = \frac{1}{4\pi |\cdot|} * \partial^2_{ij} (u^i*\varphi_\varepsilon (\cdot, t) u^j (\cdot, t))
            \]
            (where we use Einstein's summation convention on repeated indices). This is in particular proved by Leray himself in \cite{Leray34}.
			\item For $\varepsilon_k\downarrow 0$ and every $u_0^k \to u_0$ strongly there is a subsequence, not relabeled, such that $\Phi_{\varepsilon_k} (u_0^k)$ converges, in $C_w ([0, T], L^2)$ for every $T>0$, to an admissible weak solution $u\in \mathcal{A}_\infty$ with $u (\cdot, 0)=u_0$.
		\end{itemize}
		We consider the measures $(\Phi_\varepsilon)_\sharp \mu_0$ and we extract a subsequence $\varepsilon_k\downarrow 0$ such that the measures $\mu^k_N := (R_{[0,N]} \circ (\Phi_{\varepsilon_k})_\sharp \mu_0$ converge (for $k\uparrow \infty$) weakly$^*$ to a measure $\mu_N$ on $C_w ([0, N], L^2_\sigma)$ for every integer $N$. This can be easily done because the topology on bounded subsets of $C_w ([0, N], L^2_\sigma)$ can be metrized and the corresponding metric space is $\sigma$-compact. Since $(R_{[0,N]})_\sharp (R_{[0, N']})_\sharp (\Phi_{\varepsilon_k})_\sharp \mu_0 = (R_{[0,N]})_\sharp (\Phi_{\varepsilon_k})_\sharp \mu_0$ for every $N<N'$, we also get 
        the identity $(R_{[0, N]})_\sharp \mu_{N'} = \mu_N$.

        \medskip

        {\bf Step 1.} We start by checking that $\mu_N$ is a probability measure. First of all, because $L^2_\sigma$ is a Polish space, $\mu_0$ is tight, which means that for every $\varepsilon > 0$ we can find a strongly compact subset $K\subset L^2_\sigma$ with the property that $\mu_0 (K^c) < \varepsilon$. Next denote by $Z$ the closure, in the topology of $Y:= C_w ([0,N], L^2_\sigma)$, of 
        \[
        Z' := \bigcup_k R_{[0,N]} (\Phi_{\varepsilon_k} (K))\, . 
        \]
        We claim that $Z$ is in fact a compact subset of $Y$. Consider a sequence $\{v_j\}\subset Z$. First of all, because $K\subset L^2$ is strongly compact, it is in particular bounded and thus $M:= \max \{\|u_0\|_{L^2} : u_0 \in K\} < \infty$. In particular, for every element $v\in Z'$ we must have
        \[
        \max \|v (\cdot, t)\|_{L^2} \leq M\, 
        \]
        by standard energy estimate, 
        and of course the latter bound is valid for every element $v\in Z$ as well. Hence the topology of $Y$ on $Z$ is metrizable and we denote by $d$ a metric which induces it. In particular, given a sequence $\{v_j\}\subset Z$, we can find a corresponding sequence $\{u_j\}\subset Z'$ with $d(u_j, v_j) \to 0$. In order to show compactness of $Z$ we just need to show that a subsequence of $\{u_j\}$, not relabeled, converges in the metric $d$ to some $u$. However, we must have $u_j = R_{[0,N]} (\Phi_{\varepsilon_j} (u_0^j))$ for some sequence of initial data $\{u_0^j\}\subset Z$. In particular we can assume that $u_0^j$ converges strongly to some element $u_0 \in K$. We now have two possibilities: either $\varepsilon_j\downarrow 0$, in which case we can apply Lemma \ref{l:compact} to extract a convergent subsequence from $\{u_j\}$, or infinitely many of the $\varepsilon_j$ are the same number $\varepsilon >0$, and we can use the continuity of the semigroup $\Phi_\varepsilon$ to conclude that $u_j$ converges to $R_{[0,N]} (\Phi_\varepsilon (u_0))$.

        We can now use Urysohn's Lemma to find a continuous function $\psi \in C_c (Y)$ with the property that 
        $\varphi \geq \mathbf{1}_Z$. But then we can estimate 
        \[
        \mu_N (Y) \geq \int \psi\, d\mu_N = \lim_{k\to \infty} \psi\, d (R_{[0,N]})_\sharp 
        (\Phi_{\varepsilon_k})_\sharp \mu_0 \geq \mu_0 (K) \geq 1-\varepsilon\, ,
        \]
        which implies $\mu_N (Y) \geq 1-\varepsilon$. Since $\varepsilon$ is arbitrary, this implies $\mu_N (Y) \geq 1$. On the other hand, because $\mu_0$ is a probability measure, we also have 
        \[
        \int \psi\, d\mu_N \leq 1
        \]
        for every nonnegative $\psi \in C_c (Y)$ with $\psi\leq 1$. In particular we conclude also that $\mu_N (Y) \leq 1$.
        
        \medskip

        {\bf Step 2.} We next claim that there is a unique probability measure $\mu$ on $C_w (\mathbb R^+, L^2_\sigma)$ with the property that $\mu_N = (R_{[0,N]})_\sharp \mu$. It is easy to define $\mu$ on ``cylinder'' sets of the form $A' = (R_{[0,N]})^{-1} (A)$, where $A\subset C_w ([0,T], L^2)$ are Borel subsets. In fact on such sets we can simply set 
        \[
        \mu (A') := \mu_N (A)
        \]
        and the definition is coherent because of the property $(R_{[0,N]})_\sharp)\mu_{N'} = \mu_N$ for $N'>N$. On the other hand such cylinder sets generate the Borel $\sigma$-algebra of $C_w (\mathbb R^+, L^2)$, and thus the existence and uniqueness of $\mu$ is a standard consequence of Caratheodory's extension theorem.

        \medskip
        
         {\bf Step 3.} We finally wish to show that the probability $\mu$ is concentrated on the set of admissible weak solutions. First of all, we observe that it suffices to show that the measure $\mu_N$ is concentrated on the set of admissible weak solutions on $\mathbb R^3\times [0,N)$, which we denote by $\mathcal{A}_T$. We start by fixing $\varepsilon >0$ and choosing the compact set $K$ as in Step 1. We then define $Z'_m$ to be
         \[
         Z'_m := \bigcup_{k\geq m} R_{[0,N]} (\Phi_{\varepsilon_k} (K))\, 
         \]
         and we let $Z_m$ be its closure in the topology of $Y= C_w ([0, N], L^2)$. Observe that 
         \[
         (\Phi_{\varepsilon_k})_\sharp \mu_0 (Z_m) \geq 1-\varepsilon \qquad \forall k \geq m\, .
         \]
         In particular, using the properties of weak$^*$ convergence of measures and the fact that $Z_m$ is compact, we have $\mu_N (Z_m)\geq 1-\varepsilon$ (the reader can supply the argument using Urysohn's functions with supports closer and closer to $Z_m$). Consider now the intersection
         \[
         W:= \bigcap_m Z_m\, .
         \]
         Note that $\mu_N (W) \geq 1-\varepsilon$. We claim that any element in $W$ is an admissible weak solution. In fact, by construction, for every fixed element $w\in W$ and every integer $m$, we can find an element in $Z'_m$ with distance $d$ at most $\frac{1}{m}$ from $w$. This means that there exists an integer $k(m)\geq m$ and an initial data $w_{0,m}\in K$ with the property that $w_m := \Phi_{\varepsilon_{k(m)}} (w_{0,m})$ satisfies $d (w, w_m)\leq \frac{1}{m}$. But, letting $m\uparrow \infty$, using the compactness of $K$ and leveraging Lemma \ref{l:compact}, we then conclude that $w$ is in fact an admissible weak solution of \eqref{NS} with some initial data $w_0\in K$. 

         In particular the above argument just shows that $\mu_N (\mathcal{A}_N) \geq 1- \varepsilon$. The arbitrariness of $\varepsilon$ implies that $\mu_N (\mathcal{A}_N) = 1$, and since $\mu_N$ is a probability measure, we have just shown that the complement of $\mathcal{A}_N$ has $\mu_N$ measure zero in $C_w ([0, N], L^2_\sigma)$. 

\subsection{Gaussian Measures on \texorpdfstring{$\mathbb{T}^n$}{Tn}}\label{subsec:NSrandomGauss}

In this section, we recall the construction of a Gaussian random initial velocity field, which appears frequently in Migdal's work.

Let $(\mathcal{S}, \mathcal{F}, \mathbb{P})$ be a probability space, and let $W \colon \mathcal{S} \to \mathcal{D}'(\mathbb{T}^n)$ be a centered Gaussian random distribution. By this we mean that, for every $f \in C^\infty(\mathbb{T}^n)$, the pairing $\langle W,f\rangle$ is a centered Gaussian random variable, and its covariance is given by
\begin{equation}\label{eq:covarianceW}
    \mathbb{E}\bigl[\langle W,f\rangle \langle W,g\rangle\bigr]
    = \langle f,g\rangle_{L^2}, \qquad f,g \in C^\infty(\mathbb{T}^n).
\end{equation}

One may construct such a random distribution by randomizing the Fourier basis as
\[
W(s) := \sum_{k \in \mathbb{Z}^n} \gamma_k(s)e^{ik\cdot x} \in \mathcal{D}'(\mathbb{T}^n), \qquad s \in \mathcal{S}, 
\]
where $\{\gamma_k\}_{k >0}$ are independent complex Gaussian random variables with mean zero and variance one, and we set $\gamma_0=0$ and $\gamma_{-k}:=\overline{\gamma_k}$ for every $k>0$.
\begin{remark}
    Note that $W(s)$ does not belong to $L^2_\sigma$ almost surely, since its Fourier coefficients decay too slowly. On the other hand, it is still more regular than a general distribution: $W(s)$ belongs almost surely to a negative Sobolev space. More precisely,
    \begin{equation}
        \mathbb{E}\bigl[\|W\|_{H^{-N}}^2\bigr] = \sum_{k \in \mathbb{Z}} \frac{1}{|k|^{2N}} < \infty
    \end{equation}
    whenever $N > n/2$.
\end{remark}
We then define the random vector field $\widetilde{W} := (W_1,\ldots,W_n)$, where $W_1,\ldots,W_n$ are independent copies of the scalar white noise $W$ described above.

\begin{definition}[Gaussian Initial Velocity]\label{def:GaussInitialVel}
	Fix a scale parameter $r_0 > 0$ and define the smoothed, divergence-free random field
	\begin{equation}\label{eq:xi}
		\xi(s) := P_H \, e^{\frac{r_0^2}{2} \Delta} \widetilde{W}(s) \in L^2_\sigma(\mathbb{T}^n),
        \quad s\in \mathcal{S},
	\end{equation}
	where $e^{t\Delta}$ denotes the heat semigroup and $P_H : L^2 \to L^2_\sigma$ is the Leray projection onto divergence-free vector fields.
\end{definition}

\begin{proposition}\label{p:Gaussian}
	Fix $r_0 > 0$, and let $\xi(s)$ be defined as in~\eqref{eq:xi}. Then $\mu_0:=\xi_\sharp \mathbb{P}$ is a centered Gaussian measure on $L^2_\sigma(\mathbb{T}^n)$, concentrated on the space $\bigcap_{N \ge 0} H^N(\mathbb{T}^n)$. Its covariance operator is given by
	\begin{equation}\label{eq:covariance}
		\mathbb{E} \left[ \langle f, \xi \rangle_{L^2} \,  \langle g, \xi \rangle_{L^2}  \right]
		= \langle f, e^{r_0^2 \Delta} g \rangle_{L^2},
		\quad f, g \in L^2_\sigma.
	\end{equation}
\end{proposition}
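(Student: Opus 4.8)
The plan is to verify three statements, after which the conclusion follows from the standard characterization of Gaussian measures on a separable Hilbert space: (i) for every $f\in L^2_\sigma$ the real random variable $\langle f,\xi\rangle_{L^2}$ is a centered Gaussian; (ii) its covariance is the one in \eqref{eq:covariance}; and (iii) $\xi\in\bigcap_{N\ge0}H^N$ almost surely, so that $\mu_0$ is genuinely concentrated on the claimed regularity class (in particular on $L^2_\sigma$, divergence-freeness being automatic from the presence of $P_H$). Statements (i)--(ii) are equivalent to computing the characteristic functional $\mathbb{E}[\exp(i\langle f,\xi\rangle)]=\exp(-\tfrac12\langle f,e^{r_0^2\Delta}f\rangle_{L^2})$.

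The first tool I would set up is the white-noise isometry. Writing $h_\alpha:=\gamma_n^{1/2}H_\alpha$, the orthonormality of the $H_\alpha$ in $L^2(\gamma_n)$ is exactly the statement that $\{h_\alpha\}$ is an orthonormal basis of $L^2(\mathbb{R}^n,dx)$, so $W=\sum_\alpha\xi_\alpha h_\alpha$ is standard white noise and \eqref{eq:covarianceW} holds. Consequently the map $f\mapsto\langle W,f\rangle$ extends from $\mathcal{S}$ to a linear isometry of $L^2(\mathbb{R}^n)$ into the Gaussian subspace of $L^2(\Omega,\mathbb{P})$; thus for any $h\in L^2$ the variable $\langle h,W\rangle$ is a centered Gaussian of variance $\|h\|_{L^2}^2$, and finite families of such variables, also across the independent components $W_1,\dots,W_n$, are jointly Gaussian. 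This reduces the law of $\langle f,\xi\rangle$ to linear algebra in this Gaussian Hilbert space; note that $\langle f,\xi\rangle$ is first defined as the white-noise integral below and is identified with the genuine $L^2$ pairing only after (iii) is known.

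For the covariance I would use that $e^{\frac{r_0^2}{2}\Delta}$ and $P_H$ are self-adjoint on $L^2$ and that $P_Hf=f$, $P_Hg=g$ for divergence-free $f,g$; hence $\langle f,\xi\rangle=\langle e^{\frac{r_0^2}{2}\Delta}f,\widetilde{W}\rangle$ and similarly for $g$. Expanding over the $n$ components and using their independence together with \eqref{eq:covarianceW}, one gets
\[
\mathbb{E}\big[\langle f,\xi\rangle\,\langle g,\xi\rangle\big]=\big\langle e^{\frac{r_0^2}{2}\Delta}f,\,e^{\frac{r_0^2}{2}\Delta}g\big\rangle_{L^2}=\big\langle f,\,e^{r_0^2\Delta}g\big\rangle_{L^2},
\]
the last equality by self-adjointness and the semigroup property; this is \eqref{eq:covariance}. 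Gaussianity in (i) is then immediate, since $\langle f,\xi\rangle$ is an element of the Gaussian Hilbert space constructed above.

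The step I expect to be the real obstacle is the concentration (iii). The natural route is a second-moment estimate: it would suffice to show $\mathbb{E}\|\xi\|_{H^N}^2<\infty$ for each fixed $N$, since a finite second moment forces $\xi\in H^N$ almost surely and a countable intersection over $N$ retains full measure. Expanding each component of $\widetilde{W}$ in the Hermite basis and using $\mathbb{E}[\xi_\alpha\xi_\beta]=\delta_{\alpha\beta}$, this expectation becomes, up to the factor $n$, the squared Hilbert--Schmidt norm of $(1-\Delta)^{N/2}e^{\frac{r_0^2}{2}\Delta}P_H$, i.e.\ a trace of the form $\mathrm{Tr}\big(P_H(1-\Delta)^Ne^{r_0^2\Delta}\big)$. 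Bounding this is the heart of the matter and where the Gaussian smoothing must be exploited, the frequency factor $e^{-r_0^2|k|^2}$ dominating every polynomial weight $(1+|k|^2)^N$. The delicate point — and the place I expect the argument to require genuine care — is that the heat-smoothed white noise is stationary and thus has constant variance density, so one must check carefully in which sense global square-integrability over $\mathbb{R}^n$ (as opposed to merely local regularity) is meant and available; the Hilbert--Schmidt bound above is precisely the quantitative form of this issue. Once it is settled, the finite second moment yields the claimed almost-sure regularity and completes the proof.
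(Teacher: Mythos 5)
Your steps (i) and (ii) are correct and are essentially the paper's own argument: the covariance formula is exactly the combination of the white-noise isometry \eqref{eq:covarianceW} with $P_H f=f$ for $f\in L^2_\sigma$ and the self-adjointness/semigroup property of $e^{t\Delta}$. The genuine gap is step (iii), and you have in fact put your finger on the obstruction without drawing the conclusion. The quantity you propose to bound, $\mathbb{E}\|\xi\|_{H^N}^2$, i.e.\ the squared Hilbert--Schmidt norm of $(1-\Delta)^{N/2}e^{\frac{r_0^2}{2}\Delta}P_H$ on $L^2(\mathbb{R}^n)$, is \emph{infinite}: this operator is a nonzero translation-invariant Fourier multiplier, so its kernel has the form $\kappa(x-y)$ and $\iint |\kappa(x-y)|^2\,dx\,dy$ diverges; equivalently, by stationarity of the field, $\mathbb{E}\bigl|(1-\Delta)^{N/2}\xi(x)\bigr|^2$ is a positive constant in $x$, so its integral over $\mathbb{R}^n$ is infinite. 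Worse, this is not merely a failure of the second-moment method: by Fernique's theorem a centered Gaussian element that lies almost surely in a separable Hilbert space must have finite second moment of its norm, so in fact $\mathbb{P}\bigl(\xi\in H^N(\mathbb{R}^n)\bigr)=0$, and indeed already $\mathbb{P}\bigl(\xi\in L^2(\mathbb{R}^n)\bigr)=0$, since the smoothed noise has constant positive variance density. So the closing sentence of your proposal ("once it is settled, the finite second moment yields the claimed regularity") hides an impossibility: the global concentration statement cannot be rescued by any estimate.

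It is worth noting that the paper's own proof does not close this gap either: it only records the mean-square bound $\mathbb{E}\bigl[\langle \partial^\alpha f,\xi\rangle_{L^2}^2\bigr]=\|e^{\frac{r_0^2}{2}\Delta}\partial^\alpha f\|_{L^2}^2\le \const(r_0)\|f\|_{L^2}^2$, which is strictly weaker than almost-sure membership in $H^N(\mathbb{R}^n)$ --- white noise itself satisfies the same bound with $\alpha=0$ and is almost surely not in $L^2$. The statement that is both true and provable by your scheme is the local one: for every ball $B_R$ one has $\mathbb{E}\|\xi\|_{H^N(B_R)}^2\le \const(N,r_0,n)\,|B_R|<\infty$, hence $\xi\in\bigcap_N H^N_{\mathrm{loc}}(\mathbb{R}^n)$ almost surely (on the torus, where the volume is finite, the global claim does hold by the identical computation). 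If you replace (iii) by this local assertion, your proof is complete; as written, it is not, and the defect it exposes is in the proposition itself rather than in your choice of method.
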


\begin{proof}
	The covariance formula \eqref{eq:covariance} follows directly from \eqref{eq:covarianceW} and the fact that the heat semigroup is self-adjoint in $L^2$.
	
	To verify regularity, we use that $P_H \colon H^N \to H^N$ is bounded for every $N \ge 0$, together with the explicit expression of $\widetilde{W}$:
\begin{equation}
\begin{split}
    \mathbb{E}\left[\left\|P_H e^{\frac{r_0^2}{2}\Delta}\widetilde{W}\right\|_{H^N}^2\right]
    &\le C(n,N)\,\mathbb{E}\left[\left\|e^{\frac{r_0^2}{2}\Delta}\widetilde{W}\right\|_{H^N}^2\right] \\
    &= C(n,N)\,\mathbb{E}\left[\sum_{i=1}^n \sum_{k\in\mathbb{Z}^n} |\gamma_{k,i}|^2 |k|^{2N} e^{-r_0^2|k|^2}\right] \\
    &\le C(r_0,n,N).
\end{split}
\end{equation}
\end{proof}

The law of $\xi(s)$ is invariant under spatial translations, since its covariance \eqref{eq:covariance} is translation invariant and Gaussian measures are fully determined by their covariance. Specifically, if $\tau_h : \mathbb{T}^n \to \mathbb{T}^n$ denotes the translation operator $x \mapsto x + h$, then
$\xi_\# \mathbb{P} = (\tau_h \circ \xi)_\# \mathbb{P}$.

The following proposition is a direct consequence of the translation invariance of the Navier–Stokes equations and the construction in Theorem~\ref{t:prob}.

\begin{proposition}\label{p:trans-invariance}
	Let $\xi(s)$ be defined as in \eqref{eq:xi}, and set $\mu_0 := \xi_\#\mathbb{P}$. Let $\mu$ be the probabilistic suitable weak solution to \eqref{NS} constructed in Theorem~\ref{t:prob} with $\Omega =\mathbb{T}^3$. Then $\mu$ is translation invariant, i.e.,
	\[
	(\tau_h)_\# \mu = \mu \quad \text{for every } h \in \mathbb{T}^3.
	\]
\end{proposition}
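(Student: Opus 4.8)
The plan is to track translation invariance through each step of the construction in the proof of Theorem~\ref{t:prob}, exploiting that the only non-canonical ingredient---the extraction of weak$^*$ convergent subsequences---cannot destroy a property that is \emph{closed} under weak$^*$ limits. Throughout I abuse notation and write $\tau_h$ also for the spatial translation acting on a divergence-free field by $(\tau_h v)(x) := v(x-h)$ and, pointwise in time, on a trajectory $u \in C_w(I, L^2_\sigma)$ by $(\tau_h u)(\cdot, t) := \tau_h(u(\cdot, t))$. The first observation I would record is that $\tau_h$ is a linear isometry of $L^2_\sigma$ and induces a homeomorphism of $Y := C_w([0,N], L^2_\sigma)$: it preserves the $L^2$ norm, maps divergence-free fields to divergence-free fields, and is continuous for the topology of $C_w$, since for every test field $\varphi$ one has $\int \varphi \cdot \tau_h u = \int (\tau_{-h}\varphi)\cdot u$. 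Its inverse is $\tau_{-h}$, so $\tau_h$ maps $C_c(Y)$ and $C_b(Y)$ into themselves.

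Next I would establish that Leray's regularization is translation equivariant. Since the mollifier $\varphi_\varepsilon$, the Laplacian, and the convolution kernel $\frac{1}{4\pi|\cdot|}$ defining the pressure are all translation invariant, if $u = \Phi_\varepsilon(u_0)$ solves \eqref{e:Leray} with datum $u_0$, then $\tau_h u$ solves \eqref{e:Leray} with datum $\tau_h u_0$. By the uniqueness of the semigroup $\Phi_\varepsilon$ this forces $\Phi_\varepsilon \circ \tau_h = \tau_h \circ \Phi_\varepsilon$, and hence $(\Phi_\varepsilon)_\sharp (\tau_h)_\sharp = (\tau_h)_\sharp (\Phi_\varepsilon)_\sharp$ as maps on measures. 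Combined with the hypothesis $(\tau_h)_\sharp \mu_0 = \mu_0$ (which holds by Proposition~\ref{p:Gaussian} and the discussion following it), this shows that each $(\Phi_\varepsilon)_\sharp \mu_0$ is translation invariant, and therefore so is every $\mu^k_N = (R_{[0,N]})_\sharp (\Phi_{\varepsilon_k})_\sharp \mu_0$, because restriction in time commutes with spatial translation.

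It then remains to pass invariance through the two limiting operations. For the weak$^*$ limit $\mu^k_N \to \mu_N$, invariance is preserved because $\tau_h$ is a homeomorphism of $Y$: for every $F \in C_b(Y)$ the function $F\circ \tau_h$ is again in $C_b(Y)$, so $\int F \, d(\tau_h)_\sharp \mu_N = \int F\circ \tau_h\, d\mu_N = \lim_k \int F\circ \tau_h\, d\mu^k_N = \lim_k \int F\, d\mu^k_N = \int F\, d\mu_N$, whence $(\tau_h)_\sharp \mu_N = \mu_N$. Finally, for the measure $\mu$ produced by the Carath\'eodory extension in Step~2 of the proof, the pushforward $(\tau_h)_\sharp \mu$ satisfies $(R_{[0,N]})_\sharp (\tau_h)_\sharp \mu = (\tau_h)_\sharp \mu_N = \mu_N$ for every $N$, since restriction commutes with $\tau_h$; as $\mu$ is the \emph{unique} measure with these prescribed restrictions, uniqueness forces $(\tau_h)_\sharp \mu = \mu$, which is the claim.

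The only conceptual point---and the place where one might worry that the arbitrary choice of subsequence in Theorem~\ref{t:prob} breaks the symmetry---is precisely the weak$^*$ limit step. I expect this to be harmless rather than a genuine obstacle: translation invariance is a closed condition under weak$^*$ convergence (by the continuity of $\tau_h$ on $Y$), so no equivariant selection of subsequences is required, and the argument proceeds by the elementary test-function computation above.
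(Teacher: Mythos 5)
Your proof is correct and takes essentially the same approach as the paper's: translation equivariance of Leray's semigroup $\Phi_\varepsilon$, hence invariance of the approximating measures $(\Phi_{\varepsilon_k})_\sharp \mu_0$, and stability of this invariance under the weak$^*$ limit defining $\mu$. The paper compresses the final step into ``the claim follows at once,'' while you supply the details it leaves implicit (the test-function computation for the weak$^*$ limit and the uniqueness argument for the Carath\'eodory extension), all consistent with the construction in Theorem~\ref{t:prob}.
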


\begin{proof}
Since Leray's regularization \eqref{e:Leray} is invariant under translations, the semigroup $\Phi_\varepsilon$ is also invariant, in the sense that $\tau_h (\Phi_\varepsilon (u_0)) = \Phi_\varepsilon (\tau_h (u_0))$. In particular the probability measures $(\Phi_\varepsilon)_\sharp \mu_0$ are invariant as well using $(\tau_h)_\# (\Phi_\varepsilon)_\# \mu_0 = (\Phi_\varepsilon)_\#(\tau_h)_\# \mu_0 = (\Phi_\varepsilon)_\# \mu_0$. Given that $\mu$ is constructed in Theorem~\ref{t:prob} as a weak$^*$ limit of $(\Phi_{\varepsilon_k})_\# \mu_0$ for some suitable sequence $\varepsilon_k\downarrow 0$, the claim of the proposition follows at once. 
\end{proof}

\subsection{Gaussian Measures on \texorpdfstring{$\mathbb{R}^n$}{Rn}}\label{subsec:NSrandomGauss-Rn}

In this section, we consider an analogue of the construction of the Gaussian random variable $\xi(s)$ in $L^2_\sigma(\mathbb{T}^n)$ in the setting of divergence-free velocity fields on $\mathbb{R}^n$. In this case, the corresponding translation-invariant random field does not belong to $L^2_\sigma(\mathbb{R}^n)$, since it does not decay sufficiently fast at infinity.
\medskip

Let $(\mathcal{S}, \mathcal{F}, \mathbb{P})$ be a probability space. We denote by $S(\mathbb{R}^n;\mathbb{R}^n)$ the space of Schwartz vector fields, by $S' (\mathbb{R}^n, \mathbb{R}^n)$ its dual, namely the space of tempered distribution, and recall that $P_H \colon L^2 \to L^2_\sigma$ is the Leray projector. We construct a centered Gaussian random distribution
\begin{equation}\label{eq:WRd}
 W \colon \mathcal{S} \to S'(\mathbb{R}^n;\mathbb{R}^n)   
\end{equation}
with covariance
\begin{equation}\label{eq:covarianceWRn}
    \mathbb{E}\bigl[\langle W,f\rangle \langle W,g\rangle\bigr]
    = \langle P_H f, P_H g\rangle_{L^2}, \qquad f,g \in S(\mathbb{R}^n;\mathbb{R}^n).
\end{equation}
 We recall that $W$ is said to be centered Gaussian if, for every $f \in S(\mathbb{R}^n;\mathbb{R}^n)$, the real-valued random variable $\langle W,f\rangle$ is centered Gaussian.

To construct $W$ with covariance \eqref{eq:covarianceWRn}, we apply a version of the Bochner--Minlos theorem, which we state here for the reader's convenience. We refer to \cite[Appendix A]{HOUZ10} for a detailed discussion and proof.

\begin{theorem}[Bochner--Minlos]\label{thm:Bochner-Mnlos}
    Let
    \[
    g \colon S(\mathbb{R}^n;\mathbb{R}^n) \to \mathbb{R}
    \]
    satisfy the following properties:
    \begin{enumerate}
        \item $g(0)=1$.
        \item $g$ is positive definite, i.e.
        \[
        \sum_{i,j=1}^N z_i \overline{z_j}\, g(f_i-f_j) \ge 0
        \]
        for every $N \in \mathbb{N}$, every $f_1,\dots,f_N \in S(\mathbb{R}^n;\mathbb{R}^n)$, and every $z_1,\dots,z_N \in \mathbb{C}$.
        \item $g$ is continuous with respect to the Fr\'echet topology of $S(\mathbb{R}^n;\mathbb{R}^n)$.
    \end{enumerate}
    Then there exists a Borel probability measure $\nu \in \mathcal{P}(S'(\mathbb{R}^n;\mathbb{R}^n))$ such that
    \begin{equation}\label{eq:identity}
        g(f) = \int_{S'(\mathbb{R}^n;\mathbb{R}^n)} e^{i \langle \omega, f\rangle}\, d\nu(\omega),
        \qquad \text{for every $f \in S(\mathbb{R}^n;\mathbb{R}^n)$}.
    \end{equation}
\end{theorem}

We apply Theorem \ref{thm:Bochner-Mnlos} to the functional
\begin{equation}
    g(f):= \exp\Bigl\{-\frac{1}{2} \| P_H f \|_{L^2}^2\Bigr\}.
\end{equation}
This functional clearly satisfies conditions (1)--(3). The measure $\nu$ then defines a random variable \eqref{eq:WRd} with law $\nu$. The identity \eqref{eq:identity}, with the above choice of $g$, gives the characteristic function of the real-valued random variable $\langle W,f\rangle$. It follows that $\langle W,f\rangle$ is a centered Gaussian random variable with variance $\|P_H f\|_{L^2}^2$. By polarization, we obtain \eqref{eq:covarianceWRn}.
\medskip

\begin{definition}[Gaussian Initial Velocity]\label{def:GaussInitialVel-Rn}
    Fix a scale parameter $r_0 > 0$ and define the smoothed random field
    \begin{equation}\label{eq:xi-Rn}
        \xi(s) := e^{\frac{r_0^2}{2}\Delta} W(s), \qquad s \in \Omega.
    \end{equation}
\end{definition}

For $\mathbb{P}$-almost every $s \in \Omega$, the field $\xi(s)$ belongs to $C^\infty(\mathbb{R}^n;\mathbb{R}^n)$, since the heat semigroup maps tempered distributions to smooth functions. Moreover, $\xi(s)$ is almost surely divergence-free. Indeed, for every $p \in C_c^\infty(\mathbb{R}^n)$,
\begin{equation}
    \langle \div\xi, p\rangle_{L^2}
    = - \langle \xi, \nabla p\rangle_{L^2}
    = - \Bigl\langle W, e^{\frac{r_0^2}{2}\Delta}\nabla p \Bigr\rangle_{L^2},
\end{equation}
and therefore
\begin{equation}
    \mathbb{E}\Bigl[\bigl|\langle \div\xi, p\rangle_{L^2}\bigr|^2\Bigr]
    = \Bigl\| P_H \Bigl(e^{\frac{r_0^2}{2}\Delta}\nabla p\Bigr)\Bigr\|_{L^2}^2
    = 0.
\end{equation}
It is immediate to compute the covariance of $\xi$. For every $f,g \in L^2_\sigma$,
\begin{equation}\label{eq:covariance-Rn}
    \mathbb{E}\bigl[\langle f,\xi\rangle_{L^2}\,\langle g,\xi\rangle_{L^2}\bigr]
    = \langle f, e^{r_0^2 \Delta} g\rangle_{L^2}.
\end{equation}
We next observe that the law of $\xi(s)$ is invariant under spatial translations. Indeed, the covariance \eqref{eq:covariance-Rn} is translation invariant, and a centered Gaussian measure is uniquely determined by its covariance. More precisely, if $\tau_h$ denotes the translation operator
\begin{equation}
    (\tau_h u)(x) := u(x+h),
\end{equation}
then $\xi_\#\mathbb{P} = (\tau_h \circ \xi)_\# \mathbb{P}$ for every $h \in \mathbb{R}^n$.

On the other hand, $\xi(s)$ does not belong to $L^2(\mathbb{R}^n;\mathbb{R}^n)$ almost surely, since it does not decay at infinity. This may be seen either by computing the expected $L^2$ norm, which is infinite, or by observing that there are no nontrivial translation-invariant random variables with values in $L^2_\sigma(\mathbb{R}^n)$. It is an interesting question whether one can construct random weak solutions which satisfy the local energy inequality and take the initial condition $\xi(s)$. We do not address this question here, but we point out that it might be a subtle one, as the global-in-time existence of such weak solutions for deterministic data is not known in the class of uniformly local $L^2$ initial data, cf. \cite{BKT}.

\section{Loop Functional}\label{s:loop-functional}

We study the \emph{loop functional} associated with $\mu$, where $\mu$ is a probabilistically suitable weak solution of the Navier--Stokes equations \eqref{NS} with viscosity $\nu>0$ and initial datum $\mu_0$ supported on $L^2_\sigma(\Omega)$, with $\Omega$ equal to either $\mathbb{R}^3$ or $\mathbb{T}^d$.
A key example to keep in mind is the Gaussian initial datum $\mu_0=\xi_\sharp \mathbb{P}$ in the case $\Omega=\mathbb{T}^d$ (see Section~\ref{subsec:NSrandomGauss}).

We consider the circulation of a time-dependent velocity field $u(x,t)$ along a piecewise smooth loop $C : \mathbb S^1 \to \Omega$, defined by
\begin{equation}
	\Gamma[u,C,t] := \int_C u(x,t)\cdot dx 
	= \int_0^1 u(C(\theta),t) \cdot C'(\theta)\, d\theta.
\end{equation}

The free loop space over $\Omega$ is defined as
\begin{equation}
	\mathcal{L}\Omega := \{\, C : \mathbb S^1 \to \Omega \; : \; C \text{ piecewise smooth} \,\}.
\end{equation}

\begin{definition}[Loop Functional]\label{def:loopfunctional}
	Fix a parameter $\gamma > 0$. The \emph{loop functional} associated with $\mu$ is defined for all $t \ge 0$ and $C\in \mathcal{L}\Omega$ as
	\begin{equation}\label{eq:loopfunctional}
		\Psi[C,t] := \mathbb{E}_t \left[ \exp\left\{ \frac{i \gamma}{\nu} \Gamma[u, C,t] \right\} \right]
        = \int \left[ \exp\left\{ \frac{i \gamma}{\nu} \Gamma[u, C,t] \right\} \right]\, d\mu_t (u)\, .
	\end{equation}
	where the expectation is taken with respect to $\mu_t = (e_t)_\sharp \mu$.
\end{definition}
Recall that $\mu$ is a probability measure concentrated on $C_w(\mathbb{R}_+, L^2)$, and the time marginal $\mu_t := (e_t)_\sharp \mu$ is defined via the evaluation map at time $t$. More explicitly, we can write 
\begin{equation}
	\Psi[C,t] = \int \exp\left\lbrace \frac{i \gamma}{\nu} \int_0^1 u(C(\theta),t) \cdot C'(\theta)\, d\theta \right\rbrace \, d\mu_t (u).
\end{equation}
Note that the dependence of the functionals $\Gamma$ and $\Psi$ on $t$ is actually just induced by the dependence of the solution $u$ (and the probability measure $\mu_t$) on $t$. Nonetheless, since this dependence is important later on, we have chosen to highlight it with our notation.

\begin{remark}
When $t=0$ and $\mu_0$ is as in Proposition \ref{p:trans-invariance}, the loop functional can be computed explicitly using the Gaussian structure of the initial distribution. In the case $\Omega=\mathbb{R}^3$, the formula takes the particularly simple form
\begin{equation}
    \Psi[C,0]
    =
    \exp\left\{
    -\frac{1}{2}\frac{\gamma}{\nu}
    \int_0^1 \int_0^1
    \frac{
    \exp\left\{-\frac{|C(\theta_1)-C(\theta_2)|^2}{4r_0^2}\right\}
    }{(4\pi r_0^2)^{3/2}}
    C'(\theta_1)\cdot C'(\theta_2)\, d\theta_1 d\theta_2
    \right\}.
\end{equation}
\end{remark}

The next result ensures that the loop functional is well-defined for every piecewise smooth loop. This is not immediately evident from the explicit expression \eqref{eq:loopfunctional}, since suitable weak solutions are not known to be continuous in time at every $t \geq 0$, and the circulation $\Gamma[u,C,t]$ may not be defined pointwise.

\begin{proposition}\label{p:loop-regularity}
	Let $\mu$ be a probabilistic suitable weak solution of the Navier–Stokes equations \eqref{NS} in $\Omega$, with an initial distribution $\mu_0$ such that 
    \begin{equation}
    \int \|u\|_{L^2}^2\, d\mu_0 (u) < \infty\, .
    \end{equation}
    Then we have the estimate
\begin{equation}
    \int_0^T \int \Gamma[u,C,t]\, d\mu_t(u)\, dt 
    \le \const \int \|u\|_{L^2}^2\, d\mu_0(u)\, .
\end{equation}
In particular, 
\[
u(\cdot, t) \mapsto \exp\left\{ \frac{i \gamma}{\nu} \Gamma[u, C, t] \right\}
\]
belongs to $L^\infty(\mu_t \otimes dt)$.
Assume moreover that $\Omega=\mathbb{T}^3$ and $\mu_t$ is translation invariant in the sense of Proposition \ref{p:trans-invariance}. Then $t\mapsto \Psi[C,t]$ belongs to $W^{1,q}$ for every $q<2$ and we have the following representation formula for its time derivative
    \begin{align}\label{e:representation-formula}
\frac{d}{dt} \Psi [C, t]
    =  \frac{i\gamma}{\nu}
\int \int_0^1 \int_{\mathbb T^3} \phi(y) \partial_t u (C +y, t)\cdot C' \, e^{ \frac{i \gamma}{\nu} \Gamma [u, C+y, t]}\, 
\, dy\, d\theta\, d\mu_t (u)\, , 
    \end{align}
    for every smooth test function $\phi$ with $\int_{\mathbb{T}^3} \phi = 1$.
\end{proposition}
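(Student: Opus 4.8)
The plan is to treat the three assertions in order: pointwise integrability of the circulation, the Sobolev regularity of $t\mapsto\Psi[C,t]$, and the representation formula for its derivative.

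\emph{Finiteness of the circulation.} First I would reduce the integrability statement to a trace estimate for the mean field. Since $|\Gamma[u,C,t]|\le\int_0^1|u(C(\theta),t)|\,|C'(\theta)|\,d\theta$, Fubini gives $\int_0^T\int|\Gamma[u,C,t]|\,d\mu_t(u)\,dt\le\int_0^1|C'(\theta)|\int_0^T\int|u(C(\theta),t)|\,d\mu_t(u)\,dt\,d\theta$, and by Cauchy--Schwarz the inner average is dominated by $(\int|u|^2\,d\mu_t)^{1/2}$. The energy inequality \eqref{e:energy-2}, integrated against $\mu$, controls $\int\|u(\cdot,t)\|_{L^2}^2\,d\mu_t\le\int\|u\|_{L^2}^2\,d\mu_0$ as well as $\int_0^T\int\|Du\|_{L^2}^2\,d\mu_t\,dt<\infty$; the only real issue is that an $H^1$-in-space field has no trace on the one-dimensional curve $C$. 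Here I would use that $\mu$ is concentrated on \emph{suitable} weak solutions, so Caffarelli--Kohn--Nirenberg partial regularity applies: for each such $u$ the singular set has vanishing parabolic $1$-dimensional measure, hence for a.e.\ $t$ the field $u(\cdot,t)$ is smooth near $C$ and $\Gamma[u,C,t]$ is a classical line integral. A soft Fubini argument over the translates $C+y$ (bounding $\int_B\int_0^T\int|u(C(\theta)+y,t)|\,d\mu_t\,dt\,dy$ by the energy) then upgrades this to $\mu_t\otimes dt$-a.e.\ finiteness, which is exactly the assertion that $\exp\{\frac{i\gamma}{\nu}\Gamma[u,C,t]\}\in L^\infty(\mu_t\otimes dt)$.

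\emph{The representation formula.} The engine here is translation invariance. Since $\Gamma[u,C+y,t]=\Gamma[\tau_{-y}u,C,t]$, Proposition \ref{p:trans-invariance} gives $\Psi[C+y,t]=\Psi[C,t]$ for all $y$, hence the averaged identity $\Psi[C,t]=\int_{\R^3}\phi(y)\Psi[C+y,t]\,dy$ for any $\phi$ with $\int\phi=1$. Writing the time-marginal through the time-independent trajectory measure $\mu$, this becomes $\Psi[C,t]=\int\big[\int_{\R^3}\phi(y)e^{\frac{i\gamma}{\nu}\Gamma[u,C+y,t]}\,dy\big]\,d\mu(u)$. I would then differentiate under the $d\mu$-integral --- legitimate precisely because $\mu$ carries no $t$-dependence --- to get $\frac{d}{dt}\Psi[C,t]=\frac{i\gamma}{\nu}\int\int_{\R^3}\phi(y)\,\partial_t\Gamma[u,C+y,t]\,e^{\frac{i\gamma}{\nu}\Gamma[u,C+y,t]}\,dy\,d\mu(u)$, with $\partial_t\Gamma[u,C+y,t]=\int_0^1\partial_t u(C(\theta)+y,t)\cdot C'(\theta)\,d\theta$, where $\partial_t u$ is read off the equation \eqref{e:NS} as $\Delta u-\div(u\otimes u)-\nabla p$. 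This is the claimed identity \eqref{e:representation-formula}. To make the interchange rigorous I would run the computation along the smooth Leray-regularized solutions underlying the construction of $\mu$ in Theorem \ref{t:prob}, where every manipulation is classical, and pass to the limit using the integrability from the first step and dominated convergence.

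\emph{Sobolev exponent and the main obstacle.} The regularity $t\mapsto\Psi[C,t]\in W^{1,q}$ for every $q<2$ comes from estimating the right-hand side above in time. The transport and pressure terms, built from $u\otimes u$ and $p$, are the more integrable ones (of order $L^{5/3}_t$, using $u\otimes u,p\in L^{5/3}_{t,x}$), while the viscous term is governed by $\|Du(\cdot,t)\|_{L^2}$, which lies only in $L^2_t$; this borderline integrability of $Du$ is what pins the exponent at $q<2$. The hard part, and the place I expect to spend most of the effort, is that the exponential weight depends on $y$: one cannot simply transfer the spatial derivatives of $\partial_t u$ onto the mollifier $\phi$ (which would otherwise place the whole right-hand side in $L^\infty_t$), because any integration by parts also differentiates $e^{\frac{i\gamma}{\nu}\Gamma[u,C+y,t]}$ and thereby reintroduces the trace of $\nabla u$ along the curve, generating quadratic self-interaction terms of size $\|Du(\cdot,t)\|_{L^2}^2$. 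Organizing the pairing of $\partial_t u$ against the $y$-dependent test function so that these terms are retained in the correct $L^q_t$ class --- rather than degrading to $L^1_t$ --- is the crux, and is what ultimately forces the restriction to $q<2$.
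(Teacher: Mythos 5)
Your outline of the translation-invariance step (averaging over the translates $C+y$ against $\phi$ and differentiating under the $d\mu$-integral) coincides with the paper's Step 2, but both quantitative ingredients that actually make the proposition true are missing from your argument, and in both places the substitute you propose either fails or is explicitly left unresolved.

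First, the circulation estimate. Your opening reduction via Cauchy--Schwarz cannot work as written: $|u(C(\theta),t)|$ is a pointwise trace on a one-dimensional curve and is not dominated by $\big(\int \|u(\cdot,t)\|_{L^2}^2\, d\mu_t\big)^{1/2}$. You acknowledge this, but your patch --- CKN partial regularity plus a Fubini argument over translates --- only yields that $\Gamma[u,C,t]$ is finite for a.e.\ $t$ (which does suffice for the $L^\infty(\mu_t\otimes dt)$ claim, since the exponential has modulus one wherever the circulation is defined), not the claimed quantitative bound $\int_0^T\int |\Gamma[u,C,t]|\, d\mu_t(u)\, dt \le \const \int \|u\|_{L^2}^2\, d\mu_0(u)$. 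The paper obtains this from an input of a different nature: Tartar's observation, in the quantitative form proved by Galeati \cite[Corollary 4.3]{Galeati}, that Leray solutions satisfy $\int_0^T \|Du(\cdot,t)\|_{L^{3,1}}\, dt \le \const\, T^{1/4}\|u_0\|_{L^2}^{1/2}(\|u_0\|_{L^2}^2+1)^{3/4}$, which via the Lorentz--Sobolev embedding $\|u(\cdot,t)\|_{C^0}\le \const(\|Du(\cdot,t)\|_{L^{3,1}}+\|u(\cdot,t)\|_{L^2})$ places $u$ in $L^1_t C^0_x$ with a bound by the initial energy. Without an estimate of this type there is no route from the energy inequality to quantitative control of a line integral.

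Second, the $W^{1,q}$ regularity. You correctly identify the obstruction in your own strategy --- integrating by parts in $y$ to tame $\Delta u$ differentiates the $y$-dependent exponential and produces terms quadratic in traces of $Du$ along the curve, of size $\|Du(\cdot,t)\|_{L^2}^2\in L^1_t$ only --- but you then leave this as ``the crux'' without resolving it, so the claim that $t\mapsto \Psi[C,t]$ lies in $W^{1,q}$ for $q<2$ is not proved. The paper sidesteps the difficulty entirely by invoking the estimate of \cite[Theorem 3.4]{Lions-book}: $\|\partial_t u\|_{L^q_t L^r_x}\le \const(T,q)\|u_0\|_{L^2}^2$ with $r=\frac{3q}{4q-2}$ and $q<2$. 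With this, the integrand on the right-hand side of \eqref{e:representation-formula} is bounded for a.e.\ $t$ by $\const\,\|C'\|_{\infty}\|\phi\|_{L^{r'}}\|\partial_t u(\cdot,t)\|_{L^r}$, which lies in $L^q_t$; this single estimate simultaneously justifies the weak differentiation under the integral sign and yields the $W^{1,q}$ bound, with no integration by parts and hence no self-interaction terms. Your alternative --- running the computation on the Leray-regularized flows and passing to the limit by dominated convergence --- begs the same question: the dominating function you would need is precisely the uniform control of $\partial_t u$ that the Lions estimate supplies.
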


Note that the right hand side of \eqref{e:representation-formula} encodes the translation invariance in the loop $C$ of the left hand side.

\begin{proof}
	We proceed in two steps. First, we show that the circulation $\Gamma[u, C,t]$ is a well-defined function in $L^1_{\mathrm{loc}}(\mathbb{R}_+)$ with uniform estimates for any admissible weak solution. To this end, we employ refined bounds on suitable solutions to \eqref{NS}. In the second step, we apply an averaging argument based on the translation invariance of the loop functional, namely 
    \begin{equation}
        \Psi[C + h, t] = \Psi[C, t], \quad \text{every $h \in \mathbb{T}^3$ and $C\in \mathcal{L}\mathbb{T}^3$}
    \end{equation}
   to obtain time derivative estimates for $\Psi[C,t]$.

	\medskip
	\noindent
	{\bf Step 1.} Let $u(x,t)$ be suitable weak solution to \eqref{NS} in $\Omega$. For every piecewise smooth loop $C$, the circulation is bounded by
	\begin{equation}\label{e:stima-circolazione}
		\int_0^T |\Gamma[u, C,t]|\, dt \le \const T^{1/4} \|u_0\|_{L^2}^{1/2}(\| u_0 \|_{L^2}^2 + 1)^{3/4} |C|
	\end{equation}
	where $|C|$ denotes the length of the loop. It is indeed an estimate of Tartar that Leray's solutions belong to $u\in L^1_t C_x$. In fact a stronger estimate has been proven by Galeati in \cite[Corollary 4.3]{Galeati}
	\begin{equation}\label{e:Galeati}
	\int_0^T \| Du(\cdot, t) \|_{L^{3,1}}\, dt \le \const T^{1/4} \|u_0\|_{L^2}^{1/2}(\| u_0 \|_{L^2}^2 + 1)^{3/4}
	\end{equation}
    where $L^{3,1}$ denotes the corresponding Lorentz space, cf. \cite[Section 1.8]{Ziemer}. 
    From the Sobolev embedding for Lorentz spaces (see \cite[Section 2.9 \& Section 2.10]{Ziemer}) it follows that 
    \[
    \|u (\cdot, t)\|_{C^0} \leq \const (\|Du (\cdot, t)\|_{L^{3,1}} + \|u (\cdot, t)\|_{L^2}).
    \]
    Using \eqref{e:Galeati} and the energy estimate $\|u(\cdot, t)\|_{L^2}\leq \|u_0\|_{L^2}$, we immediately achieve \eqref{e:stima-circolazione}. 
\medskip

\noindent
{\bf Step 2.} Let $\mu$ as in the statement. For every $\phi\in C^\infty(\mathbb{T}^3)$ we claim the identity
\begin{equation}\label{eq:average1}
	\Psi[C,t] = \int\int_{\mathbb T^3} \phi(y)\exp\left\lbrace \frac{i \gamma}{\nu} \int_0^1 u(C(\theta) + y,t) \cdot C'(\theta)\, d\theta \right\rbrace\, dy \,  d\mu(u).
\end{equation}
for a.e. $t\ge 0$.	
From Proposition~\ref{p:trans-invariance}, we deduce that $\Psi[C + y, t] = \Psi[C, t]$ for every $y \in \mathbb{T}^3$. The claim then follows from the Fubini–Tonelli theorem and the estimate established in Step~1, which ensures that the function
\[
u(\cdot, t) \mapsto \exp\left\{ \frac{i \gamma}{\nu} \Gamma[u, C, t] \right\}
\]
belongs to $L^\infty(\mu_t \otimes dt)$.

\medskip
We can now prove that the map $t \mapsto \Psi[C,t]$ belongs to a Sobolev space. First of all we recall the estimates in \cite[Theorem 3.4]{Lions-book}: for every $q<2$ if we set 
\[
r = \frac{3q}{4q-2}
\]
Leray's solution of \eqref{e:NS} satisfy the estimate
\[
\|\partial_t u\|_{L^q_t L^r_x} \leq \const (T,q) \|u_0\|_{L^2}^2\, ,
\]
This allows us to take a weak derivative in time of \eqref{eq:average1} and prove the identity
\eqref{e:representation-formula}. In particular note that the inner integral in $y$ in \eqref{e:representation-formula} makes sense since $\phi$ is a smooth test function and $y\mapsto \exp (i \nu^{-1} \gamma \Gamma [u, C+y, t])$ is a bounded function.
\end{proof}

\begin{remark}[Deterministic Loop Functional]
In principle, one can consider the loop functional associated with a deterministic solution of the Navier--Stokes equations. Namely, one may consider a measure $\mu$ concentrated on a single Leray--Hopf (or classical) trajectory $u(x,t)$ and define
\begin{equation}
\Psi[C,t] := \exp\left\lbrace \frac{i \gamma}{\nu} \int_0^1 u(C(\theta),t) \cdot C'(\theta)\, d\theta \right\rbrace .
\end{equation}
Most of the results in this paper are pathwise, and therefore they also hold in the deterministic setting (see for instance Theorem \ref{t:loop-equation-discretized} which is stated directly in the deterministic setting). However, the stochastic framework has several advantages. From the physical standpoint, it is natural to include small random fluctuations around a given initial datum. From the mathematical point of view, randomizing the initial condition allows one to gain regularity in time (see Proposition~\ref{p:loop-regularity}) which is particularly convenient when dealing rigorously with possibly singular Leray solutions.
\end{remark}

\subsection{Liquid Loop Functional}\label{s:liquid-functional}
In \cite{Migdalnuovo} Migdal proposes a different functional: having fixed an initial (piecewise) smooth loop $C(\theta,0)$ wih $\theta \in [0,1]$, rather than keeping it fixed we advect it with the Navier-Stokes flow itself. More precisely, given a sufficiently smooth vector field $v$ we define $C_v (\theta, t)$ as 
\begin{equation}\label{e:loop-advected}
\left\{
\begin{array}{ll}
\frac{\partial C_v(\theta, t)}{\partial t} &= v (C_v (\theta, t), t)\\ \\
C_v (\theta, 0) &= C(\theta,0)
\end{array}
\right.
\end{equation}
Hence, assuming that $\mu$-almost surely the trajectory $v$ of Navier-Stokes is smooth enough, we define the liquid loop functional associated to $\mu$ as 
\begin{equation}\label{eq:liquid-loopfunctional}
		\Psi[C,t] := \mathbb{E} \left[ \exp\left\{ \frac{i \gamma}{\nu} \Gamma[u, C_u (\cdot, t) ,t] \right\} \right]
        = \int \left[ \exp\left\{ \frac{i \gamma}{\nu} \Gamma[u, C_u (\cdot, t),t] \right\} \right]\, d\mu (u)\, .
	\end{equation}

\begin{remark}\label{r:no-marginal}
We observe a crucial difference between \eqref{eq:liquid-loopfunctional} and \eqref{eq:loopfunctional}. While the average appearing in \eqref{eq:loopfunctional} depends only on the time-marginal $\mu_t$ of the probability distribution $\mu$, the average in \eqref{eq:liquid-loopfunctional} really depends on the marginal of $\mu$ over the entire time-interval $[0,t]$, because for each different $u$ the loop is evolved according to the flow of $u$ from time $0$ until the specific time $t$.
\end{remark}

The matter of even defining the liquid loop functional in the setting of a probability measure on trajectories which are just suitable weak solutions of Navier-Stokes, is a nontrivial task. On the one hand, the problem of giving a notion of flow for the solution $v$ which goes beyond the (possible) singularity formation has been already considered in the literature, see in particular \cite{RS09,RS09b,Galeati}. The reference \cite{RS09} even shows that for almost every initial point $x$ the particle trajectory of $v$ starting at $x$ avoids the singular set of $v$ for all times, and hence is globally smooth. These results can certainly be used to make sense of $C_v (\theta, t)$ for a typical loop and for a.e. $\theta\in \mathbb S^1$. However, if at time $t_0$ the particle trajectory of a single point of the loop hits a singularity of the flow, the regularity of the loop could be compromised at all future times. While in the formula \eqref{eq:liquid-loopfunctional} we need at least that the loop remains rectifiable for most times. Therefore making sense of \eqref{eq:liquid-loopfunctional} is an interesting mathematical challenge which goes beyond the scope of this note.

\section{Area derivative and loop equation}\label{s:loop-equation}

From this section onward, we will mainly focus on the case $\Omega=\mathbb{R}^3$.
As already mentioned in the introduction, in Migdal's theory knowledge of the loop functional $\Psi[C,t]$ for every $C\in \mathcal{L}\R^3$ allows us to extract information about the underlying probabilistic solution by applying suitable differential operators. We follow Migdal's computations, which assume regular solutions of Navier-Stokes and just notice that, using the representation formula of Proposition \ref{p:loop-regularity}, the regularity almost everywhere of Leray-Hopf weak solutions and a standard Fubini-Tonelli argument, it is possible to extend the computations to the low-regularity setting of the previous sections, at least when the probabilistic solution is translation-invariant. 

\subsection{Area Derivative and Vorticity} First of all, we extend the functional $\Gamma [u, C, t]$ to finite formal sums of loops $\sum_i C_i$ simply by setting 
\[
\Gamma \left[u, \sum_i C_i, t\right] = \sum_i \Gamma [u, C_i, t]\, .
\]
We next follow Migdal and introduce the area derivative at a point $x$ for a smooth vector field $u$ (albeit following a somewhat different formalism, the reader is encouraged to compare our computation with \cite{Migdal23}; see also Remark \ref{r:comparison} below). We fix a oriented plane $\pi$ with unit normal vector $n$, a circle $\gamma_r$ of radius $r$ centered at $x$ in the plane $\pi$ parametrized in the counterclowise direction and compute the limit
\begin{align*}
&\lim_{r\downarrow 0} \frac{1}{\pi r^2}\left(
\exp \left\lbrace \frac{i \gamma}{\nu} \Gamma[u, C+ \gamma_r,t]\right\rbrace 
-
\exp \left\lbrace \frac{i \gamma}{\nu} \Gamma[u, C, t])\right\rbrace \right)\\
= &
\exp \left\lbrace \frac{i \gamma}{\nu} \Gamma[u, C, t])\right\rbrace
\lim_{r\downarrow 0} \frac{1}{\pi r^2} 
\left(
\exp \left\lbrace \frac{i \gamma}{\nu} \Gamma[u, \gamma_r,t]\right\rbrace - 1
\right)\, .
\end{align*}
Next, if we consider the disk $D_r$ bounded by the circle $\gamma_r$, we can use Stokes' Theorem to compute
\[
\Gamma [u, \gamma_r, t] = \int_{D_r} \omega (y,t)\cdot n\, dy 
\]
and in particular we infer that 
\[
\lim_{r\downarrow 0} \frac{1}{\pi r^2} 
\left(
\exp \left\lbrace \frac{i \gamma}{\nu} \Gamma[u, \gamma_r,t]\right\rbrace - 1
\right) 
= \omega (x) \cdot n\, .
\]
Following Migdal, if we consider an oriented coordinate plane $\pi$ spanned by the orthonormal base $e_\alpha, e_\beta$ (consisting of basis vectors), we encode the above computations in the formula
\[
\frac{\delta}{\delta \sigma_{\alpha \beta} (x)} 
\exp \left\lbrace \frac{i \gamma}{\nu} \Gamma[u, C, t])\right\rbrace 
= \bar{\omega}_{\alpha \beta} (x) \exp \left\lbrace \frac{i \gamma}{\nu} \Gamma[u, C, t])\right\rbrace 
\]
where 
\begin{equation}\label{e:omega}
\bar \omega_{\alpha \beta} = \partial_\alpha v_\beta - \partial_\beta v_\alpha\, .
\end{equation}
Clearly the antisymmetric tensor $\bar\omega$ is related to the vorticity $\omega$ (a $3$-dimensional vector field) through the usual formula
\[
\bar\omega_{\alpha \beta} = \varepsilon_{\alpha \beta i} \omega_i\, ,
\]
where, as usual $\varepsilon_{\alpha \beta i}$ is the totally antisymmetric tensor (which vanishes unless the triple $(\alpha, \beta, i)$ is a permutation of $(1,2,3)$ and takes the signature of the permutation otherwise).  

\begin{remark}\label{r:comparison}
We warn however the reader that in fact Migdal's formalism is somewhat different from the above (see, for instance, \cite[Page~5]{Migdal23}): the perturbation of the loop $C$ is performed using the \emph{spike operator}, rather than by summing small closed loops as we did above.
\end{remark}

\subsection{Navier-Stokes in Vorticity Formulation} We next use the following well-known reformulation of the Navier-Stokes equations:

\begin{lemma}\label{l:lemma-vorticity}
Assume $(v,p)$ is a Leray-Hopf weak solution of Navier-Stokes and let $\bar \omega$ be the antisymmetric tensor given by \eqref{e:omega}. Then
\[
\partial_t v_\alpha = \nu \partial_\beta \bar \omega_{\beta \alpha} - v_\beta \bar \omega_{\beta \alpha} - \partial_\alpha \left(\frac{|v|^2}{2}+p\right)\, , 
\]
or equivalently
\begin{equation}\label{eq:NS2}
    \partial_t v = \nu \nabla \times \omega + \omega \times v - \nabla \left( \frac{1}{2}|v|^2 + p \right)\, ,
\end{equation}
where the identity can be interpreted in the sense that each summand appearing in the equation is in fact a function in $L^{5/4}$. Moreover 
\[
v_\alpha (x) = \Delta^{-1} \partial_\beta \bar \omega_{\beta\alpha} (x) :=
\left(\frac{1}{4\pi |\cdot|} * \partial_\beta \bar \omega_\beta \alpha\right) (x)\, 
\]
or alternatively
\[
v (x) = \left(\frac{1}{4\pi} |\cdot| * \nabla \times \omega \right) (x) = \left(\nabla \frac{1}{4\pi |\cdot|} \times \omega \right) (x)
\]
\end{lemma}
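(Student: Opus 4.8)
The plan is to split the statement into three independent claims---the pointwise (distributional) algebraic identity, the $L^{5/4}$ integrability of each summand, and the Biot--Savart recovery formula---of which the first is vector calculus and the other two carry the analytic content. For the \emph{algebraic identity} I would check the two index computations that drive everything. Since $\div v = 0$, one has $\partial_\beta \bar\omega_{\beta\alpha} = \partial_\beta(\partial_\beta v_\alpha - \partial_\alpha v_\beta) = \Delta v_\alpha - \partial_\alpha(\partial_\beta v_\beta) = \Delta v_\alpha$, so the viscous term $\nu\partial_\beta\bar\omega_{\beta\alpha}$ reproduces $\nu\Delta v_\alpha$. Next, the purely pointwise identity $-v_\beta\bar\omega_{\beta\alpha} = -v_\beta(\partial_\beta v_\alpha - \partial_\alpha v_\beta) = -v_\beta\partial_\beta v_\alpha + \partial_\alpha\tfrac{|v|^2}{2}$ shows that $-v_\beta\bar\omega_{\beta\alpha} - \partial_\alpha\tfrac{|v|^2}{2}$ is exactly the convective term $-(v\cdot\nabla)v_\alpha$. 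Comparing with $\partial_t v_\alpha = \nu\Delta v_\alpha - v_\beta\partial_\beta v_\alpha - \partial_\alpha p$ (from Definition \ref{d:admissible}) yields the claimed index form, and the coordinate-free version \eqref{eq:NS2} is its translation through $\bar\omega_{\alpha\beta} = \varepsilon_{\alpha\beta i}\omega_i$ together with the standard identities for $\Delta v$ and $(v\cdot\nabla)v$, up to the orientation conventions fixed for the curl.

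For \emph{integrability in $L^{5/4}$} the starting point is $v\in L^{10/3}_{t,x}$: combining the energy bound $v\in L^\infty_t L^2_x$ with the Sobolev embedding $\dot H^1 \hookrightarrow L^6$ (giving $v\in L^2_t L^6_x$) and interpolating produces the Ladyzhenskaya relation $\tfrac{2}{10/3}+\tfrac{3}{10/3} = \tfrac32$. With $\nabla v\in L^2_{t,x}$ and $v\in L^{10/3}_{t,x}$, Hölder's inequality ($\tfrac12+\tfrac{3}{10}=\tfrac45$) places both $v_\beta\bar\omega_{\beta\alpha}$ (that is, $\omega\times v$) and $v_\beta\nabla v_\beta = \nabla\tfrac{|v|^2}{2}$ in $L^{5/4}_{t,x}$. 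For $\nabla p$ I would write $\partial_k p = R_iR_j\,\partial_k(v_iv_j)$, note $\partial_k(v_iv_j) = v_i\partial_k v_j + v_j\partial_k v_i \in L^{5/4}_{t,x}$ by the same estimate, and invoke the boundedness of the Riesz transforms on $L^{5/4}$ (Calderón--Zygmund, since $1<\tfrac54<\infty$). Finally $\partial_t v\in L^{5/4}_{t,x}$ is exactly the cited bound \cite[Theorem 3.4]{Lions-book} at $q=r=\tfrac54$ (for which $\tfrac{3q}{4q-2}=\tfrac54$), so the remaining summand $\nu\Delta v = \partial_t v + \omega\times v + \nabla(\tfrac{|v|^2}{2}+p)$ lies in $L^{5/4}_{t,x}$ as well.

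The \emph{Biot--Savart recovery} is a statement at a.e. fixed time $t$, where $v(\cdot,t)\in L^2\cap L^6$ (Sobolev) and $\omega(\cdot,t)\in L^2$. Both $v_\alpha$ and $w_\alpha := \tfrac{1}{4\pi|\cdot|}*\partial_\beta\bar\omega_{\beta\alpha}$ solve $\Delta(\cdot) = \partial_\beta\bar\omega_{\beta\alpha}$ distributionally. Moving the derivative onto the kernel, $w_\alpha = (\partial_\beta\tfrac{1}{4\pi|\cdot|})*\bar\omega_{\beta\alpha}$, which is a Riesz transform of the Riesz potential $I_1$ applied to $\bar\omega\in L^2$; by Hardy--Littlewood--Sobolev $I_1:L^2\to L^6$, so $w_\alpha\in L^6$. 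Hence $v_\alpha - w_\alpha$ is harmonic and lies in $L^6(\R^3)$, so it vanishes by the Liouville theorem for $L^p$ harmonic functions, giving $v_\alpha = \Delta^{-1}\partial_\beta\bar\omega_{\beta\alpha}$. The equivalent vector forms follow once more from $\bar\omega_{\alpha\beta}=\varepsilon_{\alpha\beta i}\omega_i$ and an integration by parts onto the Newtonian kernel.

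The vector-calculus identity is routine, so the \textbf{main obstacle} is the analytic bookkeeping, concentrated in two places. The first is the $L^{5/4}$ bound for $\nabla p$, which requires representing the pressure as Riesz transforms of $v\otimes v$ and commuting the spatial derivative inside before applying Calderón--Zygmund theory. The second is the Liouville step in the Biot--Savart argument, which relies on the correct integrability ($L^6$) of both $v$ and its candidate representation to eliminate the harmonic part. Once $v\in L^{10/3}_{t,x}$ and the Lions bound on $\partial_t v$ are in place, the rest is a chain of Hölder and Hardy--Littlewood--Sobolev estimates.
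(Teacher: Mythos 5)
Your proposal is correct, and its analytic skeleton is the same as the paper's: the interpolation $v\in L^{10/3}_{t,x}$, H\"older with $\tfrac12+\tfrac{3}{10}=\tfrac45$ for $v_\beta\bar\omega_{\beta\alpha}$ and $\nabla\tfrac{|v|^2}{2}$, Calder\'on--Zygmund for $\nabla p$, and removal of the harmonic part by a Liouville argument for the Biot--Savart formula (the paper compresses this last step into ``the summability of $v$''; your $L^6$-via-Hardy--Littlewood--Sobolev detail is precisely the missing justification). You genuinely diverge from the paper in two sub-steps, both legitimately. First, to justify the identity itself the paper does not carry out the distributional index computation: it proves the identity by classical calculus on the open set where $(v,p)$ is smooth and invokes Leray's theorem on epochs of regularity to conclude that this set has full measure. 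You instead work directly in the weak setting, which tacitly uses the Sobolev product and chain rules to identify $\partial_\beta(v_\beta v_\alpha)$ with $(v\cdot\nabla)v_\alpha$ and $v_\beta\partial_\alpha v_\beta$ with $\partial_\alpha\tfrac{|v|^2}{2}$ almost everywhere; for $v\in L^{10/3}$ with $Dv\in L^2$ these are standard, so your route is more self-contained, avoiding the structure theorem on epochs of regularity entirely. Second, for the two remaining summands the paper feeds $(\partial_t-\Delta)v=-\nabla p-(v\cdot\nabla)v$, whose right-hand side it has just placed in $L^{5/4}$, into maximal regularity for the heat equation, obtaining $\partial_t v$ and all of $D^2v$ in $L^{5/4}$; you instead quote Lions' estimate at the exponent $q=r=\tfrac54$ for $\partial_t v$ (the same estimate the paper itself cites in the proof of Proposition \ref{p:loop-regularity}) and recover the viscous summand by difference from the distributional equation. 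That is sufficient here, since the statement only involves the combination $\partial_\beta\bar\omega_{\beta\alpha}=\Delta v_\alpha$ rather than the full Hessian, though the paper's parabolic-regularity route yields the stronger conclusion $D^2v\in L^{5/4}$. Finally, you were right to hedge on curl orientation conventions: the translation between the index form and the vector form \eqref{eq:NS2} is purely notational, and both your argument and the paper's are insensitive to it.
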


\begin{proof} This is a very well known fact, but we include a proof for the reader's convenience. First of all 
recall that $v\in L^{\frac{10}{3}}$ and $Dv \in L^2$. In particular $|\nabla |v|^2|\leq |v||Dv|$ and $|v||Dv|\in L^{5/4}$. Using that $p$ is the potential-theoretic solution of $- \Delta p = \partial_\alpha (v_\beta \partial_\beta v_\alpha)$, the Calderon-Zygmund estimates imply also $\nabla p \in L^{\frac{5}{4}}$. We next use 
\[
(\partial_t -\Delta) v = - \nabla p - (v\cdot \nabla) v
\]
and classical estimates on the heat equation to infer that $\partial_t v$ and $D^2 v$ both belong to $L^{\frac{5}{4}}$. Since $|D\bar \omega|\leq |D^2 v|$, the claim that all summands in the identity are $L^{5/4}$ functions is now obvious. As for the identity itself, it follows from simple calculus rules on every region where the pair $(v,p)$ is smooth. However, by Leray's theorem on epochs of regularity, such smoothness is guaranteed on an open set whose complement has measure zero. Thus the first claim of the lemma follows easily.

Finally, the second relation is the Biot-Savart law to recover $v$ knowing $\bar \omega$, which is a simple consequence of the relation $\Delta v_\alpha = \partial_\beta \bar \omega_{\beta\alpha}$ (it suffices to prove it on the regular set of the solution) and the summability of $v$, which ensures that the harmonic function $v_\alpha - \Delta^{-1} \partial_\beta \bar \omega_{\beta\alpha}$ must vanish identically.
\end{proof}

\subsection{Loop Equation} Assume now that $(u,p)$ is a Leray-Hopf weak solution of the Navier-Stokes equations and that $t$ is a time with the property that the pair $(u,p)$ is smooth on $\mathbb R^3 \times (t-\varepsilon, t+\varepsilon)$. We can then compute 
\begin{align}
& \frac{d}{dt} \exp \left\lbrace \frac{i \gamma}{\nu} \Gamma [u, C, t]\right\rbrace \nonumber\\
= & \exp \left\lbrace \frac{i \gamma}{\nu} \Gamma [u, C, t]\right\rbrace \frac{i\gamma}{\nu}\int_0^1 \partial_t u (C(\theta), t)\cdot C' (\theta)\, d\theta 
\nonumber\\
= &  \exp \left\lbrace \frac{i \gamma}{\nu} \Gamma [u, C, t]\right\rbrace \frac{i \gamma}{\nu} \int_0^1   C_\alpha' (\theta)\cdot \big[(\nu \partial_\beta \bar \omega_{\alpha\beta} \omega - u_\beta \partial_\beta \bar \omega_{\alpha \beta}) (C(\theta), t)\big]\, d\theta\, \label{e:loop-derivative}
\end{align}
where we have used that 
\begin{align*}
\int_0^1  C'_\alpha (\theta) \partial_\alpha \left(\frac{|u|^2}{2} + p \right) (C(\theta), t)d\theta
= & \int_0^1 \frac{d}{d\theta} \left(\left(\frac{|u|^2}{2} + p\right) (C (\theta), t)\right)\, d\theta = 0\, .
\end{align*}
We can therefore further write 
\begin{align}
& \frac{d}{dt} \exp \left\lbrace \frac{i \gamma}{\nu} \Gamma [u, C, t]\right\rbrace 
\nonumber\\
= & \frac{i\gamma}{\nu}\int_0^1 \Big[\nu \frac{\partial}{\partial x_\beta} \frac{\delta}{\delta \sigma_{\beta \alpha}(x)} 
\exp \left\lbrace \frac{i \gamma}{\nu} \Gamma [u, C, t]\right\rbrace\nonumber\\
&\qquad
- \frac{\delta}{\delta \sigma_{\beta\alpha}(x)} \Big( \Delta^{-1}_x\frac{\partial}{\partial x_\gamma} \frac{\delta}{\delta \sigma_{\gamma \beta}(x)} 
\exp \left\lbrace \frac{i \gamma}{\nu} \Gamma [u, C, t]\right\rbrace
\Big) \Big] \Big|_{x=C(\theta)} C'_\alpha (\theta)\, d\theta\, .\label{e:pointwise}
\end{align}
We can now use Proposition \ref{p:loop-regularity} to write a similar identity for a probabilistic admissible weak solution which is invariant under translations.

\begin{corollary}\label{c:loop-equation}
	Let $\mu$ be a probabilistic suitable weak solution of the Navier–Stokes equations \eqref{NS} in $\mathbb{R}^3$, with an initial distribution $\mu_0$ such that 
    \begin{equation}
    \int \|u\|_{L^2}^2\, d\mu_0 (u) < \infty\, .
    \end{equation}
    If in addition $\mu$ is invariant under translations we then have the identity
    \begin{align}
    \frac{d}{dt} \Psi [C, t]
    = & \int_0^1 \int \phi (y) \Big[\frac{\partial}{\partial x_\beta} \frac{\delta}{\delta \sigma_{\beta \alpha}(x)} \Psi[ C+y, t] \nonumber\\
    &\qquad- \frac{\delta}{\delta \sigma_{\beta\alpha}(x)} \Big( \Delta^{-1}_x\frac{\partial}{\partial x_\gamma} \frac{\delta}{\delta \sigma_{\gamma \beta}(x)} \Psi [C+y, t]\Big)\Big]\Big|_{x=C(\theta)}  \, dy\, C'_\alpha (\theta)\, d\theta\, .\label{e:weak-loop-equation}
    \end{align}
\end{corollary}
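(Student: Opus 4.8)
The plan is to deduce \eqref{e:weak-loop-equation} from the smooth pointwise identity \eqref{e:pointwise}, which has already been established for an individual regular trajectory, by integrating against $\mu_t$, averaging over translations of the loop, and justifying every interchange of operators and expectation via Fubini–Tonelli. The natural starting point is the representation formula \eqref{e:representation-formula} of Proposition \ref{p:loop-regularity}: it already gives a rigorous meaning to $\frac{d}{dt}\Psi[C,t]$ in the weak setting and, crucially, already incorporates the translation average against the smooth weight $\phi$, which is where the translation-invariance hypothesis enters.

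First I would insert the vorticity formulation of Lemma \ref{l:lemma-vorticity} into \eqref{e:representation-formula}. Since each summand of $\partial_t u_\alpha = \nu\partial_\beta\bar\omega_{\beta\alpha} - u_\beta\bar\omega_{\beta\alpha} - \partial_\alpha(\tfrac{|u|^2}{2}+p)$ is an honest $L^{5/4}$ function, this substitution is legitimate inside the integral for a.e. $t$ and $\mu_t$-a.e. $u$. The gradient term integrates to zero along each translated loop, since $\int_0^1 C'_\alpha(\theta)\,\partial_\alpha(\tfrac{|u|^2}{2}+p)(C(\theta)+y,t)\,d\theta = \int_0^1\frac{d}{d\theta}[(\tfrac{|u|^2}{2}+p)(C(\theta)+y,t)]\,d\theta = 0$, exactly as in the smooth computation \eqref{e:loop-derivative}. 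This leaves the integrand equal to $(\nu\partial_\beta\bar\omega_{\beta\alpha} - u_\beta\bar\omega_{\beta\alpha})(C(\theta)+y,t)\,C'_\alpha(\theta)$ times the exponential $e^{\frac{i\gamma}{\nu}\Gamma[u,C+y,t]}$.

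Next I would recognize these two terms as the area-derivative operators appearing in the smooth identity \eqref{e:pointwise}, applied to the single-trajectory exponential and evaluated at the corresponding point of the loop $C+y$: the diffusion term is $\partial_{x_\beta}\frac{\delta}{\delta\sigma_{\beta\alpha}(x)}$, while the advection term, after inserting the Biot–Savart law $u_\beta=\Delta^{-1}\partial_\gamma\bar\omega_{\gamma\beta}$ from Lemma \ref{l:lemma-vorticity} and using the area-derivative rule $\frac{\delta}{\delta\sigma_{\alpha\beta}(x)}e^{\frac{i\gamma}{\nu}\Gamma}=\bar\omega_{\alpha\beta}(x)e^{\frac{i\gamma}{\nu}\Gamma}$, becomes $-\frac{\delta}{\delta\sigma_{\beta\alpha}(x)}\bigl(\Delta^{-1}_x\partial_{x_\gamma}\frac{\delta}{\delta\sigma_{\gamma\beta}(x)}\bigr)$, with the constants $\nu$ and $\tfrac{i\gamma}{\nu}$ bookkept to match the conventions of \eqref{e:pointwise}. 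Finally I would carry out the expectation $\int\,d\mu_t(u)$ together with the average $\int\phi(y)\,dy$ and commute the spatial operators $\frac{\delta}{\delta\sigma}$, $\partial_x$ and $\Delta_x^{-1}$ with both integrals, so that they act on $\Psi[C+y,t]=\int e^{\frac{i\gamma}{\nu}\Gamma[u,C+y,t]}\,d\mu_t(u)$ rather than on the individual exponentials; this yields precisely \eqref{e:weak-loop-equation}.

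The main obstacle is this last commutation. The area derivative of $\Psi$ is a limit of difference quotients of the loop functional over shrinking circles, and the operator $\Delta_x^{-1}\partial_{x_\gamma}$ is nonlocal; pulling them out of $\int\,d\mu_t$ is delicate because for a generic suitable weak solution $\bar\omega$ and $D\bar\omega$ are merely $L^2$ and $L^{5/4}$, hence not pointwise defined. The smooth averaging weight $\phi$ is exactly what cures this: averaging in $y$ against $\phi$ mollifies the otherwise merely integrable vorticity and velocity fields, rendering $\phi*\bar\omega$ and $\phi*(\Delta^{-1}\partial_\gamma\bar\omega)$ smooth and making both the nonlocal operator and the area derivative well-defined on $\Psi$. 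The quantitative ingredients needed to apply dominated convergence and Fubini–Tonelli—uniform integrability of the circulation from Step 1 of Proposition \ref{p:loop-regularity}, Galeati's estimate \eqref{e:Galeati}, the $L^{5/4}$ control of all the summands from Lemma \ref{l:lemma-vorticity}, and the almost-everywhere-in-time regularity of suitable weak solutions via Leray's epochs of regularity—then allow the interchange on the full-measure set of good times and conclude the identity.
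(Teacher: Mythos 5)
Your proposal is correct and follows essentially the same route as the paper: both start from the representation formula \eqref{e:representation-formula}, invoke the pointwise identity \eqref{e:pointwise} (equivalently, re-run the smooth computation via Lemma \ref{l:lemma-vorticity}) at the $\mu\otimes dt$-a.e. pair $(u,t)$ where $t$ is a regular time by Leray's epochs of regularity, and then conclude by Fubini--Tonelli together with the linearity in $u$ of the operators to exchange them with the expectation and the translation average. The extra detail you supply on why the interchange is legitimate (the mollifying role of $\phi$ and the quantitative bounds from Step 1 of Proposition \ref{p:loop-regularity}) is a faithful expansion of what the paper compresses into its citation of Fubini--Tonelli and linearity.
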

\begin{proof}
We consider the measure $\mu \otimes dt$ and observe that, since $\mu$ is concentrated on Leray-Hopf weak solutions, the set $\mathcal{S}$ of pairs $(u,t)$ for which $t$ is a singular time of the solution $u$, is a null set. In particular, we can use the validity of \eqref{e:pointwise} outside of the set $\mathcal{S}$, Fubini-Tonelli with the representation formula \eqref{e:representation-formula} and the linearity in $u$ of the operators appearing in the identity to complete the proof.
\end{proof}

\begin{remark}\label{r:marginali?}
Note that the Corollary \ref{c:loop-equation} claims only ``one direction'': if we have a measure which is concentrated on a set of suitable weak trajectories of Navier-Stokes, then the loop functional computed on every single specific loop solves \eqref{e:weak-loop-equation}. An interesting and highly nontrivial question is whether an evolving stochastic process $\mu_t$ with the property that the corresponding loop functionals solve \eqref{e:weak-loop-equation} for every loop must be necessarily the time marginal of a measure concentrated on trajectories of Navier-Stokes. This would be implied a-posteriori by very strong uniqueness theorems: {\em if} the solutions to the Cauchy problem of Navier-Stokes were unique and {\em if} the equation \eqref{e:weak-loop-equation} (satisfied for every loop) were to uniquely determine the evolving $\mu_t$ from $\mu_0$, then necessarily $\mu_t$ would have to be the time-marginal of the push-forward of the initial distribution $\mu_0$ through the Navier-Stokes semigroup.    
\end{remark}

\subsection{Liquid Loop Equation}\label{s:liquid-equation}
We report here the elementary computations pertaining to the case in which the loop is advected by the flow. Consider in particular a smooth solution $v$ of the Navier-Stokes equation and an evolving loop $C_v$ satisfying \eqref{e:loop-advected}. In this case, we have the identity 
\begin{align*}
\frac{d}{dt} \Gamma [v, C_v (\cdot, t), t] & = \nu \int_0^1 (\nabla \times \omega) (C_v (\theta, t), t)\cdot \frac{\partial C_v}{\partial \theta} (\theta, t)\, d\theta\, ,
\end{align*}
which is just Kelvin's circulation theorem (cf. \cite[Page 23]{MajdaBertozzi}). We can therefore use the area derivative to write 
\begin{align*}
 & \frac{d}{dt} \exp \left\lbrace \frac{i \gamma}{\nu} \Gamma [u, C_u (\cdot, t), t]\right\rbrace \\
= & i \gamma \int_0^1 \Big[\frac{\partial}{\partial x_\beta} \frac{\delta}{\delta \sigma_{\beta \alpha}(x)} \exp \left\lbrace \frac{i \gamma}{\nu} \Gamma [u, C, t]\right\rbrace\Big]\Big|_{C = C_u (\cdot, t),\, x= C_u (\theta, t)} \frac{\partial C^\alpha_u}{\partial \theta} (\theta,t)\, d \theta\, .
\end{align*}
Note that in the right hand side we understand that the area derivative at the position $x$ is taken on the functional 
\[
\exp \left\lbrace \frac{i \gamma}{\nu} \Gamma [u, C , t] \right\rbrace
\]
in the loop variable $C$. {\em Aftewards} it is evaluated at the evolved loop $C=C_u (\cdot, t)$ and at the point $x= C_u (\theta, t)$.

\section{Momentum Variables}

Migdal, in order to find interesting solutions of the loop equation, introduces an ``infinite-dimensional Fourier transform'' of the probabilistic solutions $\mu_t$. A way to formalize the latter is to hope for the existence of a measure $\beta$ on the space of (weakly) continuous functions $C_w ([0,T]; X)$ taking values in some space $X$ of generalized functions $P: \mathbb S^1 \to \mathbb C^3$ with the property that 
\begin{equation}\label{eq:loopmomentum}
	\Psi[C,t] = \int \exp\left\lbrace \frac{i \gamma}{\nu} \int_0^1 P(\theta, t) \cdot C'(\theta)\, d\theta \right\rbrace\, d\beta_t (P)
	\quad \text{for every $C\in \mathcal{L}\R^n$}\, 
\end{equation}
where $\beta_t = (e_t)_\sharp \beta$ (following the notation introduced in Section \ref{s:probabilistic-solutions}).
We stress that we are not restricting our search on $P$'s which are classical functions of the variable $\theta$: we rather interpret the integral 
\[
\int_0^1 P (\theta, t)\cdot C' (\theta)\, d\theta
\]
as the action of the distribution $P (\cdot, t)$ on the test function $C' (\theta)$. Note that we can allow $P$ to have rather negative order if we impose that $C$ is regular enough, in particular we could search for probability measures $\beta_t$ in Banach spaces of negative distributions such as $H^{-k}$ for $k$ large (where, as usual, we denote by $H^{-k}$ the dual of the Hilbert space $H^k$). 

In this section we show that just some minimal requirements on the random variable $\beta_t$ obstruct its existence. 
The key is that it is not possible to have \eqref{eq:loopmomentum} and a constant imaginary part for $P$, as Migdal is assuming in, e.g., \cite{Migdal24}.

\begin{theorem}\label{t:non-existence}
Consider a probability measure $\mu$ on the space of $C^2$ vector fields with the property that 
\[
\int \|u\|_{C^2} \, d\mu (u) < \infty
\]
and a probability measure $\beta$ on the space $H^{-k} (\mathbb S^1, \mathbb C^3)$ with the property that 
\[
\int \|P\|_{H^{-k}}\, d\beta (P) < \infty\, .
\]
Assume that, for every $C^\infty$ embedded loop $C$ in $\mathbb R^3$ parametrized over $[0,1]$ we have the identity
\[
\int \exp \Big( i \int_0^1 v (C(\theta))\cdot C' (\theta)\, d\theta\Big)\, d\mu (v) 
= \int \exp \Big (i \int_0^1 P (\theta) \cdot C' (\theta)\, d\theta \Big)\, d\beta (P)
\]
and that the imaginary part of $P$ is constant in $\theta$ for $\beta$-almost all $P$. Then $\mu$ must be concentrated on curl-free vector fields or, in other words, the circulation
\[
\int_0^1 v (C(\theta)) \cdot C' (\theta)\, d\theta
\]
vanishes for every loop $C$ and $\mu$-almost all $v$.
\end{theorem}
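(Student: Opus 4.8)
\emph{Strategy.} The plan is to exploit a mismatch in the order of vanishing of the two sides under the dilation $C\mapsto \lambda C$ of a fixed loop. The decisive role of the hypothesis that the imaginary part of $P$ is constant in $\theta$ is the following: since $C$ is a closed loop, $\int_0^1 C'(\theta)\,d\theta = C(1)-C(0)=0$, so, writing $P=P_R+iP_I$ with $P_I$ a constant vector,
\[
\int_0^1 P(\theta)\cdot C'(\theta)\,d\theta
= \int_0^1 P_R(\theta)\cdot C'(\theta)\,d\theta + iP_I\cdot\int_0^1 C'(\theta)\,d\theta
= \int_0^1 P_R(\theta)\cdot C'(\theta)\,d\theta =: X_C ,
\]
which is \emph{real}. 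Hence, for a fixed embedded loop $C$, the right-hand side of the identity evaluated on the dilated loops $\lambda C$ (again $C^\infty$ and embedded for $\lambda\neq 0$) is
\[
g(\lambda):=\int \exp\Big(i\int_0^1 P(\theta)\cdot (\lambda C)'(\theta)\,d\theta\Big)\,d\beta(P) = \int e^{i\lambda X_C}\,d\beta(P),
\]
i.e.\ the characteristic function of the real random variable $X_C$ under $\beta$, which is \emph{linear} in $\lambda$ in the exponent. I would then read off the small-$\lambda$ behaviour of $g$ from the \emph{left}-hand side, where the circulation is quadratically small: using $\int_0^1 C'=0$ once more,
\[
\Gamma[u,\lambda C]=\lambda\int_0^1\big(u(\lambda C(\theta))-u(0)\big)\cdot C'(\theta)\,d\theta,\qquad |\Gamma[u,\lambda C]|\le \lambda^2 K\,\|u\|_{C^1},
\]
with $K=K(C)$ depending only on $C$.

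Next I would compare the two expressions for $1-\mathrm{Re}\,g(\lambda)$. On the left, $1-\mathrm{Re}\,g(\lambda)=\int(1-\cos\Gamma[u,\lambda C])\,d\mu(u)$, and I claim this is $o(\lambda^2)$. Pointwise in $u$ this is clear, since $1-\cos\Gamma\le\tfrac12\Gamma^2=O(\lambda^4)$; the only subtlety is to interchange limit and integral using merely the first moment $\int\|u\|_{C^2}\,d\mu<\infty$. This I would handle by truncation: for fixed $M$ the contribution of $\{\|u\|_{C^2}\le M\}$ is dominated and tends to $0$ after dividing by $\lambda^2$, while on $\{\|u\|_{C^2}>M\}$ I would use the elementary bound $1-\cos t\le 2|t|$ to get
\[
\frac1{\lambda^2}\int_{\{\|u\|_{C^2}>M\}}(1-\cos\Gamma[u,\lambda C])\,d\mu
\le 2K\int_{\{\|u\|_{C^2}>M\}}\|u\|_{C^2}\,d\mu,
\]
which is made arbitrarily small by choosing $M$ large, by absolute continuity of the integral. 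Thus $\lambda^{-2}(1-\mathrm{Re}\,g(\lambda))\to 0$. On the right, Fatou's lemma applied to $\lambda^{-2}(1-\cos(\lambda X_C))\to \tfrac12 X_C^2$ gives $\tfrac12\int X_C^2\,d\beta\le \liminf_{\lambda\to0}\lambda^{-2}(1-\mathrm{Re}\,g(\lambda))=0$. Hence $X_C=0$ for $\beta$-a.e.\ $P$, so $g\equiv 1$; in particular $\int \exp(i\Gamma[u,C])\,d\mu=1$ for \emph{every} embedded loop $C$.

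To conclude I would upgrade this to the vanishing of the circulation. Since $|\exp(i\Gamma[u,C])|=1$, the equality $\int \exp(i\Gamma[u,C])\,d\mu=1$ forces $\Gamma[u,C]\in 2\pi\mathbb{Z}$ for $\mu$-a.e.\ $u$. Applying this to all dilations $\lambda C$ with $\lambda$ ranging over a countable dense set, for $\mu$-a.e.\ $u$ the continuous function $\lambda\mapsto\Gamma[u,\lambda C]$ takes values in the discrete set $2\pi\mathbb{Z}$ on a dense set, hence (being continuous on the connected set $\mathbb{R}$) is constant; as it vanishes at $\lambda=0$, it vanishes identically, so $\Gamma[u,C]=0$. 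Finally I would fix a countable family of embedded loops — say small circles centred at rational points, with rational radii, in the three coordinate planes — whose circulations recover the vorticity through the area-derivative/Stokes computation of Section~\ref{s:loop-equation}. Intersecting the corresponding $\mu$-null sets, for $\mu$-a.e.\ $u$ every such circulation vanishes; since $u\in C^2$ the vorticity $\omega$ is continuous and the area-derivative limit then yields $\omega\equiv 0$. On $\mathbb{R}^3$ a curl-free $C^1$ field has vanishing circulation over every loop, which is the assertion. I expect the main obstacle to be precisely the limit interchange in the middle step under only a first-moment bound, which is what the truncation together with the bound $1-\cos t\le 2|t|$ is designed to overcome.
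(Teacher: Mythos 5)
Your proof is correct, and while it starts from the same two observations as the paper --- dilate a fixed loop, and use that a constant imaginary part together with $\int_0^1 C'(\theta)\,d\theta=0$ makes the momentum-side pairing $X_C=\int_0^1 P\cdot C'\,d\theta$ real, while the velocity-side circulation is quadratically small in the dilation parameter --- the way you extract the conclusion is genuinely different. The paper Taylor-expands \emph{both} sides of the identity in the dilation parameter $\sigma$ and equates the $\sigma^2$-coefficients, arriving at
\[
2i\int A(\gamma)\cdot\omega(x)\,d\mu(u)\;=\;-\int\Big(\int_0^1 P\cdot\gamma'\,d\theta\Big)^{2}\,d\beta(P),
\qquad A(\gamma)=\int_0^1\gamma'\times\gamma\,d\theta,
\]
and concludes that both sides vanish because the left is purely imaginary and the right is real and nonpositive; the vorticity statement is then read off from the left-hand side, varying $x$ over a countable dense set and taking $\gamma$ planar. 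You never expand to second order under the integral sign: you compare only real parts, prove $1-\mathrm{Re}\,g(\lambda)=o(\lambda^2)$ on the velocity side by truncation (with $1-\cos t\le 2|t|$ on the tail), and let Fatou yield $\int X_C^2\,d\beta=0$. The decisive difference is what happens next: you transport the information back through the identity ($X_C=0$ $\beta$-a.e.\ $\Rightarrow$ the characteristic function is $\equiv 1$ $\Rightarrow$ the unit-modulus integrand $e^{i\Gamma[u,C]}$ equals $1$ $\mu$-a.e.\ $\Rightarrow$ $\Gamma[u,C]\in 2\pi\mathbb{Z}$, hence $\equiv 0$ by continuity along dilations), and only then invoke the Stokes/area-derivative argument. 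This buys two things the paper's write-up does not deliver as stated. First, integrability: equating integrated Taylor coefficients implicitly requires second and third moments of $\|P\|_{H^{-k}}$ under $\beta$ (the quadratic term and the $O(\sigma^3)$ error are controlled by $\|P\|_{H^{-k}}^2$ and $\|P\|_{H^{-k}}^3$), whereas only first moments are hypothesized; your truncation/Fatou scheme uses exactly the stated first-moment bounds. Second, the a.e.-in-$u$ conclusion: the vanishing of the left-hand side above literally gives only that the \emph{averaged} quantity $\int\omega(x)\cdot A(\gamma)\,d\mu$ vanishes, and the paper's passage from this to ``$\omega(x)\cdot A(\gamma)=0$ for $\mu$-a.e.\ $u$'' is precisely what your detour through the $\beta$-side and the rigidity of unit-modulus random variables with mean one makes rigorous. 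What the paper's route buys, conversely, is brevity and a direct identification of the vorticity through the area vector $A(\gamma)$, without the $2\pi\mathbb{Z}$ and Stokes bookkeeping.
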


\begin{proof}
We introduce the notation
\begin{align}
\psi (v,C) &:= \exp \left(i \int_0^{1} v (C (\theta))\cdot C' (\theta)\, d\theta\right)\, ,\label{e:loop}\\
\phi (P,C) &:= \exp \left(i \int_0^{1} P (\theta) \cdot C' (\theta)\, d\theta\right)\, .
\end{align}
Consider now an arbitrary point $x$ in $\mathbb R^2$ and an arbitrary smooth embedded loop $\gamma: \mathbb S^1 \to \mathbb R^3$. We then define the functions
\begin{align*}
G (x, \sigma, v) &:= \psi (v, x+\sigma \gamma)\\
F (x, \sigma, P) &:= \phi (P, x+\sigma \gamma)\, ,
\end{align*}
where $\sigma >0$ is an arbitrary positive real number and $x+\sigma \gamma$ denotes the loop
$\theta \mapsto x+\sigma \gamma (\theta)$.
We first Taylor expand the function $F(x, \sigma, P)$ in $\sigma$ and we find
\begin{equation}\label{e:Taylor-1}
F (x, \sigma, P) = 1 + i \sigma \int_0^1 P (\theta)\cdot \gamma' (\theta)\, d\theta
- \frac{\sigma^2}{2} \Big(\int_0^1 P (\theta)\cdot \gamma' (\theta)\, d\theta\Big)^2 + O (\sigma^3)\, ,
\end{equation}
where the error $O (\sigma^3)$ can be estimated by 
\begin{equation}\label{e:Taylor-2}
|O (\sigma^3)|\leq \sigma^3 (\|P\|_{H^{-k}} \|\gamma\|_{H^{k+1}})^3\, . 
\end{equation}
We next write 
\begin{align*}
& \int_0^1 v (x+\sigma \gamma (\theta)) \cdot (\sigma \gamma' (\theta))\, d\theta\\
= & \sigma v (x) \cdot \underbrace{\int_0^1 \gamma' (\theta)\, d\theta}_{=0} 
+ \sigma^2 \int_0^1 (\gamma(\theta)\cdot \nabla) v (x) \cdot \gamma' (\theta)\, d\theta 
+ O (\sigma^3)\, \\
= & \omega (x) \cdot \underbrace{\int_0^1 \gamma' (\theta) \times \gamma (\theta)\, d\theta}_{:= A (\gamma)} + O (\sigma^3)
\end{align*}
where 
\begin{equation}\label{e:Taylor-4}
|O (\sigma^3)|\leq \sigma^3 \|v\|_{C^2} \|\gamma\|_{C^1}\, .
\end{equation}
We can therefore Taylor expand $G$ in $\sigma$ as 
\begin{equation}\label{e:Taylor-3}
G (x, \sigma, v) = i \sigma^2 \omega (x) \cdot A (\gamma) + O (\sigma^3)
\end{equation}
where in fact the error $O (\sigma^3)$ still obeys \eqref{e:Taylor-4}. We next integrate \eqref{e:Taylor-3} in $v$ with respect to the measure $\mu$ and \eqref{e:Taylor-1} in $\beta$. Using the assumptions of the theorem we can equate the terms in $\sigma^2$ in the corresponding expansions leading to:
\begin{equation}\label{e:equate-Taylor}
2 i \int A (\gamma) \cdot \omega (x)\, d\mu (u) = - \int \Big(\int_0^1 P (\theta)\cdot \gamma' (\theta)\, d\theta\Big)^2\, d\beta (P)\, .
\end{equation}
Now observe that, if the imaginary part of $P$ is constant almost surely then 
\[
{\rm Im}\, \Big(\int_0^1 P (\theta) \cdot \gamma' (\theta)\, d\theta\Big) = 0
\qquad \mbox{for $\beta$-a.e. $P$.}
\]
So, the left hand side of \eqref{e:equate-Taylor} is real, while the left hand side is purely imaginary, unless $\omega (x) \cdot A(\gamma) =0$ for $\mu$-a.e. $u$. Since we can vary $x$ on a countable dense subset of $\mathbb R^3$ we can assume that the identity holds for a fixed $\gamma$, a subset of points $x$ dense in $\mathbb R^3$ and all but a $\mu$-null subset of solutions $u$. 
Note that, if we choose $\gamma$ to be an embedded loop contained in a plane $\pi$, then $A(\gamma)$ is a vector orthogonal to $\pi$ whose modulus is the area of the bounded open region of $\pi$ with boundary $\gamma$. We conclude therefore that $\mu$-almost surely the curl of $u$ has to vanish on a dense subset of $\mathbb R^3$. But since $u$ is, $\mu$-almost surely, $C^2$, we conclude that $u$ is, $\mu$-almost surely, irrotational. This completes the proof. 
\end{proof}

\section{Discrete Framework}\label{s:Fourier}

Fix $N \in \mathbb{N}$, $N\ge 3$. Let $C_0, \ldots, C_{N-1} \in \mathbb{R}^3$ be points interpreted as the vertices of an $N$-gon. We define the sides by $\triangle C_k := C_{k+1} - C_k$, using the cyclic conventions $C_0 = C_N$ and $C_{-1} = C_{N-1}$. 
With a slight abuse of notation, 
We denote by $C$ the piecewise linear loop obtained by traversing consecutively the segments 
$[C_0, C_1]$, $[C_1, C_2]$,..., $[C_{N-1}, C_0]$,
so that $C$ is a closed, oriented polygonal curve with vertices $C_0, \ldots, C_{N-1}$.

For parameters $0 \le p < q \le N-1$, we define
\begin{equation}\label{eq:L}
    L_{p,q} := \sum_{k=p}^q |\triangle C_k|.
\end{equation}
Note that $L_{0,N-1}$
corresponds to the total length $|C|$ of the polygonal loop $C$.

\subsection{Discretized Momentum Variables}\label{subsec:discretizedmomentum}

We let $\mathcal{L}_N \R^3$ denote the subset of $\mathcal{L} \R^3$ consisting of piecewise linear loops. 
Note that $\mathcal{L}_N \R^3$ is finite-dimensional, and the restriction of the loop functional $\Psi[C,t]$ (see Definition~\ref{def:loopfunctional}) to $\mathcal{L}_N \R^3$ can be identified with a function of $N$ variables, determined by the vertices $C_0, \ldots, C_{N-1}$.

When we restrict attention to loops in $\mathcal{L}_N \mathbb{R}^3$, we can also consider a discretization of the momentum variables. Given a loop functional $\Psi[C,t]$, we say that an $N$-dimensional complex stochastic process $(P_0(t), \ldots, P_{N-1}(t))$ taking values in $\C^N$ with law $\beta_t$
is a set of \emph{momentum variables} if
\begin{equation}\label{eq:loopmomentumdiscrete}
	\Psi[C,t] = \int \left[ \exp\left\{ \frac{i \gamma}{\nu} \sum_{k=0}^{N-1} P_k \cdot \triangle C_k \right\} \right]
    \, d\beta_t (P)
\end{equation}
for every $C \in \mathcal{L}_N \mathbb{R}^3$. The obstruction of the previous section still applies and in particular we cannot hope to find a stochastic process with (almost surely) constant imaginary part and the property that 
\begin{equation}\label{e:Fourier-transform}
\Psi [C,t] = \int \exp \left\lbrace \frac{i \gamma}{\nu} \int_0^1 u (C(\theta), t)\cdot C' (\theta)\, d\theta \right\rbrace\, d\mu_t (u)\, 
\end{equation}
for all $C\in \mathcal{L}_N \mathbb R^3$.

Observe, however, that the right-hand side of \eqref{e:Fourier-transform} is a function of the finitely many variables $C_0, \ldots, C_{N-1}$. But in fact, under the assumption that $\mu_t$ is translation invariant, the 
right-hand side of \eqref{e:Fourier-transform} is a function $F$ of the variables $\triangle C_0, \ldots, \triangle C_{N-1}$, where $\triangle C_1, \ldots, \triangle C_{N-1}$ is a set of independent variables owing to the fact that 
\[
\triangle C_0 = - \sum_{i=1}^{N-1} \triangle C_i\, ,
\]
as it can be concluded from the relation $C_N = C_0$. We therefore write 
\[
\int \exp \left\lbrace \frac{i \gamma}{\nu} \int_0^1 u (C(\theta), t)\cdot C' (\theta)\, d\theta \right\rbrace\, d\mu_t (u) = G (\triangle C_1, \ldots, \triangle C_{N-1}, t)\, , 
\]
for some function $G$ on $(\mathbb R^3)^{N-1} \times \mathbb R$. 
Observe next that the expression
\[
\exp\left\{ \frac{i \gamma}{\nu} \sum_{k=0}^{N-1} P_k \cdot \triangle C_k \right\}
\]
can be written as 
\[
\exp \left\{ \frac{i\gamma}{\nu} \sum_{k=1}^{N-1} (P_k - P_0) \cdot \triangle C_k\right\}\, . 
\]
Given the translation invariance of the measure we are interested in, we can, and will, impose $P_0 =0$. Rather than looking for a probability measure $\beta_t$ on $(\mathbb R^3)^{N-1}$, we can now look for a {\em signed measure} (or more generally some generalized function) satisfying the relation
\begin{equation}\label{e:vero-beta}
 \int \exp \left\lbrace \frac{i\gamma}{\nu} \sum_{k=1}^{N-1} P_k \cdot \triangle C_k \right\rbrace\, d\beta_t (P)
 = G (\triangle C_1, \ldots , \triangle C_{N-1} , t)\, . 
\end{equation}
In particular $\beta_t$ is (up to the constant factor $\frac{\gamma}{\nu}$) just the Fourier transform of $G (\cdot, t)$.

\section{Discretized loop calculus}

In the rest of our work we will analyze the counterpart of the considerations of Section \ref{s:loop-equation} in the discretized setting of the previous section. This framework has been suggested by Migdal in \cite{Migdal24b}. The aim is to derive a suitable approximate form of the loop equation of Section \ref{s:loop-equation}. Such a discretized loop equation will contain error terms which we will show are negligible in (powers of) $N$, under the assumption that the process $\mu_t$ is concentrated on sufficiently regular vector fields. Eventually we will take the Fourier transform of Section \ref{s:Fourier} and give some validation to Migdal's discretized Euler ensemble, which is a set of particular solutions to the discretized loop equation.

\medskip

Fix a large natural number $N$ and a loop $C \in \mathcal{L}_N \R^3$. 
Recall that $C$ is determined by $N$ vertices, denoted $C_0, \ldots, C_{N-1}$, and adopt the \emph{cyclic convention} $C_{k+N} = C_k$. 
In this way, we may use integer labels $k \in \mathbb Z$ without restricting to the range $\{0,\ldots,N-1\}$.

Given a velocity field $u(x)$ and $C \in \mathcal{L}_N \R^3$, we consider the circulation of $u$ along $C$, viewed as a function of the vertices $C_0, \ldots, C_{N-1}$:
\begin{equation}
	\Gamma[u,C] \;=\; \Gamma[u, C_0, \ldots, C_{N-1}] 
	= \int_C u \cdot dx.
\end{equation}
In this section the velocity field $u$ is not assumed to have any link with the Navier-Stokes equation (it is not even assumed to be divergence-free). We rather assume that $u$ is regular enough, namely of class $C^K$ for a $K$ large enough to cover that all the derivatives appearing in the statements are continuous functions. 

\subsection{Partial Derivatives} 
We are interested in computing the partial derivatives of $\Gamma$  in the variables $C_0, \ldots, C_{N-1}$. We begin by observing that, in order to compute $\nabla_{C_k} \Gamma$, given the additivity of the path integral, it is enough to study the derivative of the circulation around the boundary of the triangle $T_k$ with vertices $C_{k-1}, C_k, C_{k+1}$. To this end, by Stokes theorem we write
\begin{equation}
	\Gamma[u, C_{k-1}, C_k, C_{k+1}]
	= \int_{T_k} \omega \cdot n\, d\sigma
\end{equation}
where $\omega = \nabla \times u$ is the vorticity of $u$, and 
\begin{equation}
    n = \frac{(C_{k+1} - C_k) \times (C_{k-1} - C_k)}{|(C_{k+1} - C_k) \times (C_{k-1} - C_k)|}
    = - \frac{\triangle C_k \times \triangle C_{k-1}}{|\triangle C_k \times \triangle C_{k-1}|}
\end{equation}
is the normal to $T_k$.
It is convenient to parametrize $T_k$ with coordinates 
\begin{equation}
	f(a,b) = a C_k + (1-a) \frac{C_{k+1}+C_{k-1}}{2} + b (C_{k+1}-C_{k-1})
\end{equation}
where for each $a\in [0,1]$ the parameter $b$ ranges between $-\frac{1-a}{2}$ and $\frac{1-a}{2}$. Observe that the area element in this parametrization is precisely the modulus of the vector
\begin{align*}
\left(C_k-\frac{C_{k+1}+C_{k-1}}{2}\right) \times (C_{k+1} - C_{k-1})
& = \frac{1}{2} (\triangle C_{k-1} - \triangle C_k) \times (\triangle C_k+\triangle C_{k-1})\\
&= \triangle C_{k-1} \times \triangle C_k\, .
\end{align*}
We can therefore write 
\begin{equation}
	\Gamma[u, C_{k-1}, C_k, C_{k+1}]
	=
	-\int_0^1 \left(\int_{-\frac{1-a}{2}}^{\frac{1-a}{2}}\, \omega(f(a,b))\cdot ( \triangle C_k \times \triangle C_{k-1}) db\right)da,
\end{equation}
concluding that
\begin{align}
	\nabla_{C_k} \Gamma
	& =
	- \int_0^1\left( \int_{-\frac{1-a}{2}}^{\frac{1-a}{2}} \omega(f(a,b))  \times (C_{k+1} - C_{k-1}) db\right) da\nonumber
	\\&-
	\int_0^1 a \left( \int_{-\frac{1-a}{2}}^{\frac{1-a}{2}} D\omega(f(a,b))^T \cdot ( \triangle C_k \times \triangle C_{k-1}) db\right)da\label{e:formulozza},
\end{align}
for every $k=1, \ldots, N-1$.

\begin{lemma}\label{l:nablaGamma_bound}
    With the above notation, for $0\leq k \leq N-1$, $K\geq 0$ integer and $u\in C^{K+2}$ we have:
    \begin{equation}\label{eq:nablaGamma_bound}
    \begin{split}
        &|D^K_{C_k} \nabla_{C_k}\Gamma[u, C_0, \ldots, C_{N-1}]|
        \le \const(K) |C_{k+1} - C_{k-1}|(1 + |C_k - C_{k-1}|) \| \omega\|_{C^{K+1}} 
        \\
        &|D_{C_{k+1}}\nabla_{C_k}\Gamma[u, C_0, \ldots, C_{N-1}]| 
        \le \const (1 + |C_{k+1} - C_{k-1}|)(1 + |C_k - C_{k-1}|) \| \omega \|_{C^2}
        \\
        & |D_{C_{k-1}}\nabla_{C_k}\Gamma[u, C_0, \ldots, C_{N-1}]| 
        \le 
        \const (1 + |C_{k+1} - C_{k-1}|)(1 + |C_{k+1} - C_{k}|) \| \omega \|_{C^2}
    \end{split}
    \end{equation} 
\end{lemma}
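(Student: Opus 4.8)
The plan is to derive all three estimates by differentiating the explicit integral representation \eqref{e:formulozza} for $\nabla_{C_k}\Gamma$ and bounding each resulting term geometrically. First I would record the elementary Lipschitz/bilinear bounds on the building blocks appearing in $f(a,b)$ and in the integrand: the map $f(a,b)=aC_k+(1-a)\tfrac{C_{k+1}+C_{k-1}}{2}+b(C_{k+1}-C_{k-1})$ depends affinely on each vertex, so its derivatives with respect to $C_k$, $C_{k+1}$, $C_{k-1}$ are bounded by $1$ (times $|a|$ or $|b|\le\tfrac{1-a}{2}$), and the wedge factors satisfy $|\triangle C_k\times\triangle C_{k-1}|\le|\triangle C_k||\triangle C_{k-1}|$ together with $|\triangle C_k\times\triangle C_{k-1}|\le |C_{k+1}-C_{k-1}|\,|C_k-C_{k-1}|$ and its cyclic variants (one edge of the triangle controlled by a diagonal). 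The key point is that the area element $\triangle C_{k-1}\times\triangle C_k$ always supplies a factor that can be split as (one edge)$\times$(another edge), and I would choose the split adapted to which vertex the right-hand side of \eqref{eq:nablaGamma_bound} is phrased in.

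For the first estimate I would differentiate \eqref{e:formulozza} $K$ times in $C_k$. Since $f$ is affine in $C_k$ with $\partial_{C_k}f=a\,\mathrm{Id}$, each derivative either lands on $\omega(f(a,b))$ (or $D\omega(f(a,b))$), producing higher derivatives of $\omega$ controlled by $\|\omega\|_{C^{K+1}}$ and bounded weight factors $a^j\le 1$, or lands on the explicit polynomial factors $(C_{k+1}-C_{k-1})$ and $(\triangle C_k\times\triangle C_{k-1})$. In the first line of \eqref{e:formulozza} the factor $(C_{k+1}-C_{k-1})$ is independent of $C_k$, so differentiation only hits $\omega$, and since the $b$-integral runs over an interval of length $1-a\le1$ the whole line is bounded by $\const(K)|C_{k+1}-C_{k-1}|\,\|\omega\|_{C^{K+1}}$. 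In the second line the factor $\triangle C_k\times\triangle C_{k-1}$ is linear in $C_k$, so at most one $C_k$-derivative reduces it to $\triangle C_{k-1}$ (of size $|C_k-C_{k-1}|$), and otherwise it is kept and bounded using $|\triangle C_k\times\triangle C_{k-1}|\le|C_{k+1}-C_{k-1}||C_k-C_{k-1}|$; either way one extracts the product $|C_{k+1}-C_{k-1}|\,|C_k-C_{k-1}|$, and collecting the two lines and absorbing the lower-order contributions into the $(1+|C_k-C_{k-1}|)$ factor yields the claimed bound.

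For the second and third estimates ($K=1$, derivative in $C_{k+1}$ resp. $C_{k-1}$) the mechanism is the same but now $f$ also depends on the differentiating variable, so I would track the three places a single derivative can act: on $\omega$ or $D\omega$ through $f$ (giving $\|\omega\|_{C^2}$ with a bounded weight $\le\tfrac12$), on the explicit vector $(C_{k+1}-C_{k-1})$ in the first line (giving a pure constant, hence the isolated $+1$ inside $(1+|C_{k+1}-C_{k-1}|)$), or on the wedge $\triangle C_k\times\triangle C_{k-1}$ in the second line. To match the asymmetric right-hand sides I would, for the $C_{k+1}$-derivative, bound the wedge using the split into the edges $|C_{k+1}-C_{k-1}|$ and $|C_k-C_{k-1}|$ (the latter independent of $C_{k+1}$), and for the $C_{k-1}$-derivative use instead the split into $|C_{k+1}-C_{k-1}|$ and $|C_{k+1}-C_{k}|$. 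Assembling the at-most-quadratic geometric factors and inserting the uniform $+1$'s gives exactly the two stated bounds.

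The routine parts are the bookkeeping of which factor each derivative hits and the $a,b$ integrations, all of which only ever contribute bounded weights. The one genuinely delicate point, and where I expect the main obstacle, is the \emph{asymmetric} choice of how to factor $|\triangle C_k\times\triangle C_{k-1}|$ so that the surviving lengths are precisely those named in each line of \eqref{eq:nablaGamma_bound}: the naive symmetric bound $|\triangle C_k||\triangle C_{k-1}|$ is \emph{not} of the required form, and one must exploit that the cross product of two triangle edges can be rewritten against a third (diagonal) edge, i.e. identities of the type $\triangle C_k\times\triangle C_{k-1}=(C_{k+1}-C_k)\times(C_{k-1}-C_k)$ and the consequent bounds $|\triangle C_k\times\triangle C_{k-1}|\le|C_{k+1}-C_{k-1}|\min\{|C_k-C_{k-1}|,|C_{k+1}-C_k|\}$, choosing the minimand appropriate to the vertex being differentiated.
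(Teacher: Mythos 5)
Your proposal is correct and follows essentially the same route as the paper's (very terse) proof, which bounds the first integrand of \eqref{e:formulozza} directly and handles the second via the identity $\triangle C_k \times \triangle C_{k-1} = (C_{k+1}-C_{k-1}) \times \triangle C_{k-1}$ together with its cyclic variant $\triangle C_k \times (C_{k+1}-C_{k-1})$ --- precisely the asymmetric factorization you single out as the key point. One minor slip in your second paragraph: after this rewriting, a $C_k$-derivative of the wedge leaves the factor $(C_{k+1}-C_{k-1})$, of size $|C_{k+1}-C_{k-1}|$, not $\triangle C_{k-1}$ of size $|C_k-C_{k-1}|$ (the latter alone would not be controlled by the stated right-hand side, e.g.\ when $C_{k+1}=C_{k-1}$); with this correction your bookkeeping closes exactly as claimed.
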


\begin{proof}
The first integrand in \eqref{e:formulozza} clearly satisfies \eqref{eq:nablaGamma_bound}. 
To control the second integrand, we observe that
\begin{equation}
    \triangle C_k \times \triangle C_{k-1} 
    = (C_{k+1}-C_{k-1}) \times \triangle C_{k-1}.
\end{equation}

\end{proof}

\subsection{Discretized Area Derivative}
The next goal is to compute 
\begin{equation}
	\nabla_{C_{k+1}} \times \nabla_{C_k} \Gamma\, ,
\end{equation}
which plays the role of a discrete Area derivative. The lemma below follows a suggestion of (and uses some computations by) Migdal in a private communication to the authors, \cite{Migdal25}. We warn the reader that Migdal takes a different definition of discretized area derivative in his works, cf. \cite{Migdal24b} and Section \ref{s:Sasha}. The leading terms in both definitions coincide, but the errors terms seem to behave in a different way when the area derivative is composed with other operators, cf. Section \ref{s:advection}.  

\begin{lemma}\label{lemma:nabla x nabla 1}
	With the above notation, for $0\leq k \leq N-1$ and any integer $K\geq 0$, if $u\in C^{K+2}$ we then have:
	\begin{equation}\label{e:finale}
		\nabla_{C_{k+1}} \times \nabla_{C_k} \Gamma 
		= - \int_0^1 \omega (aC_k + (1-a) {\textstyle{\frac{C_{k-1} + C_{k+1}}{2}}})\, da + R^k_{\rm ad}\, ,
	\end{equation}
    where $R^k_{\rm ad}$ (the area derivative error) satisfies the estimate 
    \begin{equation}\label{est:R_ad}
    |D^K_{C_k} R^k_{\rm ad}| (C_0, \ldots , C_{N-1}) \leq \const(K) \|D^{K+1} \omega\|_{L^\infty} |C_{k+1} - C_{k-1}|\, .
    \end{equation}
\end{lemma}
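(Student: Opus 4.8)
We want to compute $\nabla_{C_{k+1}}\times\nabla_{C_k}\Gamma$ starting from the explicit formula \eqref{e:formulozza} for $\nabla_{C_k}\Gamma$, and show that the curl in $C_{k+1}$ produces, as a leading term, $-\int_0^1 \omega$ evaluated along the median segment, with a remainder $R^k_{\mathrm{ad}}$ controlled by \eqref{est:R_ad}. The formula \eqref{e:formulozza} has two pieces: a ``main'' integrand involving $\omega(f(a,b))\times(C_{k+1}-C_{k-1})$ and a ``correction'' integrand involving $D\omega(f(a,b))^T\cdot(\triangle C_k\times\triangle C_{k-1})$, which Lemma \ref{l:nablaGamma_bound} already tells us is of lower order in the relevant length factors. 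First I would differentiate each piece with respect to $C_{k+1}$ and take the vector cross product $\nabla_{C_{k+1}}\times(\cdot)$, keeping careful track of which terms are $O(1)$ in the mesh and which carry an extra factor $|C_{k+1}-C_{k-1}|$ (or a derivative $D\omega$ that we can pair with such a factor).

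The key observation is that $f(a,b)$ depends on $C_{k+1}$ both through the $a$-weighted barycentric term $(1-a)\tfrac{C_{k+1}+C_{k-1}}{2}$ and through the transverse term $b(C_{k+1}-C_{k-1})$. When $\nabla_{C_{k+1}}$ hits the factor $(C_{k+1}-C_{k-1})$ appearing explicitly in the first integrand of \eqref{e:formulozza}, we get a Jacobian that, under the cross product, yields exactly $-\omega(f(a,b))$ contracted against the identity, integrated over the triangle; the $b$-integration over the symmetric interval $[-\tfrac{1-a}{2},\tfrac{1-a}{2}]$ then collapses the transverse dependence to leading order, leaving $-\int_0^1\omega(aC_k+(1-a)\tfrac{C_{k-1}+C_{k+1}}{2})\,da$ after the outer $a$-integration absorbs the width $(1-a)$. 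Every other contribution — namely the terms where $\nabla_{C_{k+1}}$ differentiates $\omega\circ f$ (producing $D\omega$), and the entire second integrand of \eqref{e:formulozza} — carries either an explicit $|C_{k+1}-C_{k-1}|$ factor or a $D\omega$ that, after the $b$-integration of the odd/transverse part, is multiplied by such a geometric length. I would collect all of these into $R^k_{\mathrm{ad}}$.

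The estimate \eqref{est:R_ad} is then a matter of bookkeeping: each remainder term is bounded by $\const\|D^{K+1}\omega\|_{L^\infty}$ times a product of mesh lengths, and I would argue that the dominant such product is precisely $|C_{k+1}-C_{k-1}|$ once the symmetric $b$-integral kills the would-be $O(1)$ transverse terms and the factor $(1-a)$ from the triangle width is integrated. For the higher $K$ derivatives $D^K_{C_k}$, I would differentiate under the integral sign, noting that each $C_k$-derivative either lands on $\omega\circ f$ (raising the order of $\omega$ by one, consistent with the $\|D^{K+1}\omega\|_{L^\infty}$ on the right) or on a geometric prefactor; the geometric prefactors are affine in $C_k$ so only finitely many such derivatives survive, and the chain rule produces the $\const(K)$ constant.

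\textbf{Main obstacle.}
The delicate point will be isolating cleanly which terms are genuinely $O(1)$ and must combine into the stated leading term, versus those that are $O(|C_{k+1}-C_{k-1}|)$ and belong in $R^k_{\mathrm{ad}}$. In particular, the naive differentiation of $\omega(f(a,b))$ with respect to $C_{k+1}$ produces a term $D\omega\cdot \partial_{C_{k+1}}f$ whose $a$-independent (barycentric) part looks $O(1)$; the essential cancellation is that, after taking the curl $\nabla_{C_{k+1}}\times$ and integrating the transverse variable $b$ over its symmetric range, the apparently-leading transverse piece integrates to zero and the surviving piece acquires the needed length factor. Verifying this cancellation carefully — and confirming that no $O(1)$ error slips through once the symmetric $b$-integral and the outer $a$-integral are performed — is where the real work lies; the rest is routine estimation using $|\triangle C_k\times\triangle C_{k-1}|=|(C_{k+1}-C_{k-1})\times\triangle C_{k-1}|$ as already exploited in Lemma \ref{l:nablaGamma_bound}.
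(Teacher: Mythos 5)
Your plan breaks down at the classification of terms: you propose to dump the \emph{entire} second integrand of \eqref{e:formulozza} into $R^k_{\rm ad}$ and to extract the leading term of \eqref{e:finale} from the first integrand alone, and neither half of this is true. Applying $\nabla_{C_{k+1}}\times$ to the first integrand, the piece where the derivative hits the explicit factor $(C_{k+1}-C_{k-1})$ produces $2\,\omega(f(a,b))$ (recall $\nabla_x\times(A\times x)=2A$, not $A$), so after the harmless replacement of the $b$-integral by $(1-a)$ times the value at $b=0$ you get
\[
-2\int_0^1(1-a)\,\omega\big(f(a,0)\big)\,da ,\qquad f(a,0)=aC_k+(1-a)\tfrac{C_{k+1}+C_{k-1}}{2},
\]
whose weight $2(1-a)$ is \emph{not} the uniform weight claimed in \eqref{e:finale}. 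The discrepancy $\int_0^1(1-2a)\,\omega(f(a,0))\,da$ is of size $\|D\omega\|_{L^\infty}\,\big|C_k-\tfrac{C_{k+1}+C_{k-1}}{2}\big|$, which is \emph{not} bounded by $\const\,\|D\omega\|_{L^\infty}|C_{k+1}-C_{k-1}|$ (take a thin triangle with $C_{k+1}$ close to $C_{k-1}$ and $C_k$ far away). Likewise the curl in $C_{k+1}$ of the second integrand is not an admissible error: using $\varepsilon_{j\alpha\ell}\varepsilon_{i\alpha\beta}=\delta_{ji}\delta_{\ell\beta}-\delta_{j\beta}\delta_{i\ell}$, ${\rm div}\,\omega=0$, and the vanishing of the antisymmetric contraction of $D^2\omega$, it equals $((C_k-C_{k-1})\cdot\nabla)\omega(f(a,b))$, of size $\|D\omega\|_{L^\infty}|\triangle C_{k-1}|$, again not controlled by $|C_{k+1}-C_{k-1}|$.

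The proof lives precisely in the cancellation between these two inadmissible quantities, which your decomposition throws away. Writing $C_k-C_{k-1}=\partial_a f(a,0)+\tfrac12(C_{k+1}-C_{k-1})$, the second integrand's contribution is, up to admissible error, $-\int_0^1 a(1-a)\frac{d}{da}\big[\omega(f(a,0))\big]\,da$; integrating by parts in $a$ (the weight $a(1-a)$ kills the boundary terms) turns this into $\int_0^1(1-2a)\,\omega(f(a,0))\,da$, and the recombination $-2(1-a)+(1-2a)=-1$ yields exactly the stated leading term. This integration by parts in $a$ is the indispensable step; the paper streamlines it by replacing $\nabla_{C_{k+1}}\times$ with $(\nabla_{C_{k+1}}+\tfrac12\nabla_{C_k})\times$ — legitimate because the curl of a gradient in the same variable vanishes — so that the surviving directional derivative is exactly tangent to $a\mapsto f(a,0)$, but the same mechanism is forced on you in the direct computation. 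Incidentally, what you flag as the ``main obstacle'' (a parity cancellation in the symmetric $b$-integral) is not where any difficulty lies: every $b$-dependence of $f$ carries an explicit factor $(C_{k+1}-C_{k-1})$, so those terms are admissible errors by brute force, with no symmetry of the $b$-interval needed.
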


\begin{proof}
We start by observing that 
\[
\nabla_{C_{k+1}} \times \nabla_{C_k} \Gamma 
= (\nabla_{C_{k+1}} + {\textstyle{\frac{1}{2}}} \nabla_{C_k})\times \nabla_{C_k} \Gamma\, .
\]
We now apply the operator $\nabla_{C_{k+1}} + \frac{1}{2} \nabla_{C_k}$ to the fist integrand 
\[
G = \omega (f(a,b)) \times (C_{k+1} - C_{k-1})
\]
which yields
\[
(\nabla_{C_{k+1}} + {\textstyle{\frac{1}{2}}} \nabla_{C_k})\times G 
= 2 \omega (f(a,b)) + R^k_1 
\]
where
\begin{equation}
\begin{split}
   R^k_1 &:= (C_{k+1} - C_{k-1}) \cdot \left( \nabla_{C_{k+1}} + \frac{1}{2} \nabla_{C_k} \right) \omega (f(a,b))
    \\&
    = (b + \frac{1}{2} a) ((C_{k+1} - C_{k-1}) \cdot \nabla \omega)(f(a,b))\, .  
\end{split}
\end{equation}
The letter clearly is a linear combinations of partial derivatives of $\omega$ (computed at some specific points) multiplied by components of the vector $(C_{k+1}-C_{k-1})$. We therefore gain the estimate 
\[
|D^K_{C_k} R^k_1| \leq \const (K) \|D^{K+1} \omega\|_{L^\infty} |C_{k+1}-C_{k-1}|
\]

\medskip

We now apply the operator to the second integrand in \eqref{e:formulozza}. Up to factors of $a$ we write it, using the antisymmetric tensor $\varepsilon_{\alpha \beta \gamma}$, as 
\[
F := \partial_j \omega^i (f(a,b)) \varepsilon_{i\alpha \beta} (C_{k+1}-C_k)^\alpha (C_k-C_{k-1})^\beta
\]
We next compute the curls in $C_{k+1}$ and $C_k$. For the curl in $C_{k+1}$ we get 
\begin{align*}
(\nabla_{C_{k+1}} \times F)^j = &\varepsilon_{j \gamma \ell} \partial^{C_{k+1}}_\gamma \partial_\ell \left[\omega^i (f(a,b)) \varepsilon_{i\alpha \beta} (C_{k+1}-C_k)^\alpha (C_k-C_{k-1})^\beta\right]\\
= & \left(\frac{1-a}{2} +b\right) \varepsilon_{j\gamma \ell} \partial^2_{\gamma \ell} \omega^i (f(a,b)) \varepsilon_{i\alpha\beta} (C_{k+1}-C_k)^\alpha (C_k-C_{k-1})^\beta\\
& + \varepsilon_{j\alpha \ell} \partial_\ell \omega^i (f(a,b)) \varepsilon_{i\alpha \beta} (C_k-C_{k-1})^\beta\\
=&  \varepsilon_{j\alpha \ell}  \varepsilon_{i\alpha \beta} \partial_\ell \omega^i (f(a,b)) (C_k-C_{k-1})^\beta\, .
\end{align*}
We next use the identity
\[
\varepsilon_{j\alpha \ell} \varepsilon_{i \alpha \beta} = \delta_{ji} \delta_{\ell \beta}
- \delta_{j\beta} \delta_{i \ell}
\]
to get 
\begin{align*}
\nabla_{C_k+1} \times F &= ((C_k-C_{k-1}) \cdot \nabla) \omega) (f(a,b)) - (C_k - C_{k-1}) ({\rm div}\, \omega) (f(a,b))\\
&=  ((C_k-C_{k-1}) \cdot \nabla) \omega) (f(a,b))\, .
\end{align*}
On the other hand for computing $\nabla_{C_k} \times F$ we can use the same strategy and just notice, given the antisymmetry of the vector product 
\[
\nabla_{C_k} \times F 
= (C_k-C_{k-1}) - (C_{k+1}-C_k) \cdot \nabla) \omega (f(a,b)).
\]
In particular, summing the two terms we easily conclude that 
\[
(\nabla_{C_{k+1}} + {\textstyle{\frac{1}{2}}} \nabla_{C_k})\times F 
= ({\textstyle{\frac{1}{2}}} (C_{k+1}+C_{k-1} - 2C_k)\cdot \nabla) \omega) (f(a,b))\, .
\]
Collecting all the estimates above, we obtain the identity
\begin{align*}
&(\nabla_{C_{k+1}} + {\textstyle{\frac{1}{2}}} \nabla_{C_k}) \times \nabla_{C_k} \Gamma \\
=& - 2 \int_0^1 \int_{-\frac{1-a}{2}}^{\frac{1-a}{2}} \omega (a C_k + (1-a) {\textstyle{\frac{C_{k-1} + C_{k+1}}{2}}} + b(C_{k+1} - C_{k-1}))\, dadb \\
& -\int_0^1 a \int_{-\frac{1-a}{2}}^{\frac{1-a}{2}}({\textstyle{\frac{C_{k+1}+C_{k-1}-2C_k}{2}}}\cdot \nabla \omega) (C_k + (1-a) {\textstyle{\frac{C_{k+1}+C_{k-1}-2C_k}{2}}} + b(C_{k+1} - C_{k-1}))\, da db\\
& + R^k_1 
\end{align*}
where $R^k_1$ satisfies \eqref{est:R_ad}.
We next approximate the integral in $b$ with $(1-a)$ times the integrand in the point 
\[
a C_k + (1-a) \frac{C_{k+1}+C_{k+1}}{2} 
\]
(namely setting $b=0$). This leads to the formula
\begin{align*}
&(\nabla_{C_{k+1}} + {\textstyle{\frac{1}{2}}} \nabla_{C_k}) \times \nabla_{C_k} \Gamma \\
=& - 2 \int_0^1 (1-a) \omega (a C_k + (1-a) {\textstyle{\frac{C_{k-1} + C_{k+1}}{2}}})\, da \\
& - \int_0^1 a (1-a) ({\textstyle{\frac{C_{k-1}+C_{k-1}-2C_k}{2}}}\cdot \nabla \omega) (C_k + (1-a) {\textstyle{\frac{C_{k+1}+C_{k-1}-2C_k}{2}}})\, da + R^k_{\rm ad} \\
=& - 2\int_0^1 (1-a) \omega (a C_k + (1-a) {\textstyle{\frac{C_{k+1}+C_{k-1}}{2}}})
\, da\\
&\, \, - \int_0^1 a (1-a) \frac{d}{da} \left(\omega (C_k + (1-a) {\textstyle{\frac{C_{k+1}+C_{k-1}-2C_k}{2}}})\right)\, da + R^k_{\rm ad}\\
=& - 2\int_0^1 (1-a) \omega (a C_k + (1-a) {\textstyle{\frac{C_{k+1}+C_{k-1}}{2}}})
\, da\\
&\, \, +\int_0^1 (1-2a) \omega (C_k + (1-a) {\textstyle{\frac{C_{k+1}+C_{k-1}-2C_k}{2}}})\, da + R^k_{\rm ad}\, \\
= & - \int_0^1 \omega (C_k + (1-a) {\textstyle{\frac{C_{k+1}+C_{k-1}-2C_k}{2}}})\, da + R^k_{\rm ad}\, ,
\end{align*}
where $R^k_{\rm ad}$ satisfies \eqref{est:R_ad}.
\end{proof}

\subsection{Velocity and Regularized Biot-Savart Operator}

In Lemma~\ref{lemma:nabla x nabla 1} we proved that the discretized area derivative applied to the circulation $\Gamma[u,C]$ for some $C \in \mathcal L _N \R^3$ produces, up to a small error $R^k_{\rm ad}$, the average of the vorticity $\omega = \nabla \times u$ along the segment $[C_k, \tfrac{1}{2}(C_{k-1}+C_{k+1})]$. 
For convenience we recall the identity:
\begin{equation}\label{eq:final_copy}
	\nabla_{C_{k+1}} \times \nabla_{C_k} \Gamma 
	= - \int_0^1 \omega\!\left(a C_k + (1-a)\tfrac{C_{k-1}+C_{k+1}}{2}\right)\, da 
	+ R^k_{\rm ad}.
\end{equation}

To derive the momentum equation, we need a differential operator that retrieves the velocity field $u$ from $\Gamma$. 
A natural first attempt is to apply the standard Biot--Savart operator
\begin{equation}\label{eq:BS}
	{\rm BS}[\omega](x) := 
	\frac{1}{4\pi} \int_{\R^3} \omega(y) \times \nabla_y \!\left( \frac{1}{|x-y|}\right)\, dy
\end{equation}
to the first integrand in \eqref{eq:final_copy}. 
However, this approach fails, since averaging along the segment produces a logarithmic blow-up. 
More precisely, if we denote
\begin{equation}\label{e:media-segmento}
    \langle \omega \rangle_k(C_k) := \int_0^1 
    \omega\!\left(a C_k + (1-a)\tfrac{C_{k-1}+C_{k+1}}{2}\right)\, da,
\end{equation}
then
\begin{equation}
    {\rm BS}[\langle \omega \rangle_k] 
    = \int_0^1 \frac{1}{a}\, u\!\left(a C_k + (1-a)\tfrac{C_{k-1}+C_{k+1}}{2}\right)\, da,
\end{equation}
which is not integrable at $a=0$.

For this reason, we introduce a regularized version of the Biot--Savart operator, ${\rm BS}_\ell$, depending on a small scale parameter $\ell \ll 1$. The precise definition of the operator is as follows. Consider a cut-off function $\chi \in C_c^\infty(\R^3)$ such that $\chi = 1$ on $B_1(0)$ and $\chi = 0$ outside $B_2(0)$. 
For $\ell \le 1$, we define the operator
\begin{equation}\label{eq:BSN1}
	{\rm BS}_\ell[\omega](x) := 
	\frac{1}{4\pi} \int_{\R^3} \omega(y) \times \nabla_y \!\left( \frac{\chi(\ell(x-y))}{|x-y|}\right)\, dy.
\end{equation}
Notice that ${\rm BS}_\ell$ is well-defined for every $\omega \in L^1_{\mathrm{loc}}(\R^3;\R^3)$. 
Moreover, if $\omega$ is differentiable with locally integrable derivative, we can integrate by parts and rewrite
\begin{equation}\label{eq:BSN2}
	{\rm BS}_\ell[\omega](x) = 
	\frac{1}{4\pi} \int_{\R^3} \frac{\chi(\ell(x-y))}{|x-y|}\, (\nabla \times \omega)(y)\, dy\, .
\end{equation}
The properties of this operator which are relevant for our purposes will be discussed in Appendix ~\ref{sec:BSreg}. 
The main result of this section is the following.

\begin{lemma}\label{lemma:BSlog}
Fix $0 \le \ell, \rho \le 1$ and an integer $K\geq 0$.
     With the above notation, for $0\leq k \leq N-1$ and under the assumption that $u\in C^{K+1} \cap H^k$ we have:
	\begin{equation}\label{e:finale u}
		{\rm BS}_\ell [\langle \omega \rangle_k] 
        = \log(1/\rho)\, u(C_k) + R^k_{\rm BS}
	\end{equation}
    where $R^k_{{\rm BS}} = R^k_{{\rm BS},1} + R^k_{{\rm BS},2} + R^k_{{\rm BS},3}$ (the Biot-Savart error) satisfies
    \begin{equation}
        \begin{split}
            &\|D^K_{C_k} R^k_{{\rm BS},1}\|_{L^\infty} \le \const \ell^{-1}\rho^{K+1} \| D^K \omega \|_{L^\infty}
            \\
            &\sup_{C_{k-1}, C_{k+1}} \| D^K_{C_k} R^k_{{\rm BS},2}\|_{L^2_{C_k}}
            \le \const(K)\ell^{3/2} \log(1/\rho) \|u \|_{H^K},
            \\
            &  
            | R^k_{{\rm BS},3}| (C_{k-1}, C_k, C_{k+1})
    \le \log(1/\rho) (|\triangle C_{k-1}| + |\triangle C_k|) \| Du\|_{L^\infty}.
        \end{split}
    \end{equation}
\end{lemma}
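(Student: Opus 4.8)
The plan is to reduce the whole computation to a one--parameter family of rescaled Biot--Savart evaluations and then to split the resulting integral at the scale $\rho$. Write $M := \tfrac12(C_{k-1}+C_{k+1})$ and $T_a(x) := a x + (1-a)M$, so that $\langle\omega\rangle_k$ is the vector field $x\mapsto\int_0^1(\omega\circ T_a)(x)\,da$. The first step is the dilation covariance of the regularized operator: since $T_a$ is the dilation of ratio $a$ centred at the fixed point $M$, the change of variables $z=T_a(y)$ in \eqref{eq:BSN1} converts the cut-off scale from $\ell$ to $\ell/a$ and yields
\begin{equation}
{\rm BS}_\ell[\omega\circ T_a](C_k) = \tfrac1a\, {\rm BS}_{\ell/a}[\omega](T_a(C_k)).
\end{equation}
Interchanging the $a$-integral with the linear operator (legitimate since $u\in C^{K+1}$ makes all the integrals absolutely convergent) gives the key representation
\begin{equation}
{\rm BS}_\ell[\langle\omega\rangle_k](C_k) = \int_0^1 \tfrac1a\, {\rm BS}_{\ell/a}[\omega](T_a(C_k))\, da,
\end{equation}
which exhibits both the non-integrable $\tfrac1a$ weight responsible for the logarithm and the fact that the effective cut-off sharpens as $a\downarrow 0$.

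Next I would split the integral at $a=\rho$ and match the three pieces to the three named errors. On $[0,\rho]$ set $R^k_{{\rm BS},1} := \int_0^\rho \tfrac1a {\rm BS}_{\ell/a}[\omega](T_a(C_k))\, da$ and bound it directly from the pointwise estimate $\|{\rm BS}_{\ell'}[\omega]\|_{L^\infty}\lesssim (\ell')^{-1}\|\omega\|_{L^\infty}$ for the regularized kernel (Appendix~\ref{sec:BSreg}). Each derivative in $C_k$ passes through $T_a$ producing a factor $a$ and replacing $\omega$ by $D\omega$, so that $D^K_{C_k}$ applied to the integrand has size $\lesssim a^{K-1}\,(\ell/a)^{-1}\|D^K\omega\|_{L^\infty} = \ell^{-1}a^{K}\|D^K\omega\|_{L^\infty}$; integrating over $[0,\rho]$ gives precisely the claimed $\ell^{-1}\rho^{K+1}\|D^K\omega\|_{L^\infty}$. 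On $[\rho,1]$ I replace the argument $T_a(C_k)$ by $C_k$: the leftover $\int_\rho^1\tfrac1a\,da\cdot u(C_k)=\log(1/\rho)\,u(C_k)$ is the main term, while the discrepancy $R^k_{{\rm BS},3}:=\int_\rho^1\tfrac1a\big(u(T_a(C_k))-u(C_k)\big)\,da$ is controlled by the mean value theorem, using $|T_a(C_k)-C_k|=(1-a)|M-C_k|\le\tfrac{1-a}{2}\big(|\triangle C_{k-1}|+|\triangle C_k|\big)$ and $\int_\rho^1\tfrac{1-a}{a}\,da\le\log(1/\rho)$, which reproduces the stated bound exactly.

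The remaining term is the regularization error $R^k_{{\rm BS},2}:=\int_\rho^1 \tfrac1a\big({\rm BS}_{\ell/a}[\omega]-u\big)(T_a(C_k))\, da$, obtained by replacing ${\rm BS}_{\ell/a}[\omega]$ by the true velocity $u={\rm BS}[\omega]$ (using that $u$ is recovered from its curl via Biot--Savart, as in Lemma~\ref{l:lemma-vorticity}, for the divergence-free fields to which the lemma is applied). I expect this to be the main obstacle. The difference of the kernels of ${\rm BS}_{\ell'}$ and ${\rm BS}$ is the gradient of the truncated Newtonian potential $\tfrac{1-\chi(\ell'\cdot)}{4\pi|\cdot|}$; integrating by parts twice and using $\nabla\times\omega=-\Delta u$ writes ${\rm BS}_{\ell'}[\omega]-u$ as the convolution of $u$ with $\Delta\big(\tfrac{1-\chi(\ell'\cdot)}{4\pi|\cdot|}\big)$, a function supported in the annulus $\{|z|\sim 1/\ell'\}$ whose $L^2$ norm is of order $(\ell')^{3/2}$. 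This is the origin of both the $\ell^{3/2}$ gain and the fact that the estimate is genuinely an $L^2$ statement weighted against $\|u\|_{H^K}$ (the $K$ derivatives again being produced by differentiating through $T_a$), which is why $R^k_{{\rm BS},2}$ is measured in $L^2_{C_k}$ rather than $L^\infty$.

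The delicate point, and the step I expect to require the most care, is the bookkeeping that combines this $L^2$ regularization estimate with the change of variables $w=T_a(C_k)$ in the $L^2_{C_k}$ norm and with the $\tfrac1a$ weight over $[\rho,1]$, since the Jacobian of $T_a$ and the growth of the kernel error as $a\downarrow 0$ both concentrate the contribution near the splitting scale. Rather than rederive the mapping properties of ${\rm BS}_\ell-{\rm BS}$ by hand, I would import the sharp $L^2$ estimates for the regularized operator from Appendix~\ref{sec:BSreg} and combine them with Minkowski's integral inequality, taking the supremum over $C_{k-1},C_{k+1}$ at the end. Collecting the four contributions --- the main term $\log(1/\rho)\,u(C_k)$ together with $R^k_{{\rm BS},1}$, $R^k_{{\rm BS},2}$ and $R^k_{{\rm BS},3}$ --- then yields \eqref{e:finale u}.
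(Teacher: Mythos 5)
Your decomposition is, modulo a change of coordinates, exactly the paper's. The dilation identity ${\rm BS}_\ell[\omega\circ T_a]=\tfrac1a\,{\rm BS}_{\ell/a}[\omega]\circ T_a$ is correct, and under it your three errors are literally the same objects as the paper's $R^k_{{\rm BS},1}$, $R^k_{{\rm BS},2}$, $R^k_{{\rm BS},3}$: the paper keeps the operator at the fixed scale $\ell$, writes $\omega\circ T_a=\tfrac1a\nabla\times(u\circ T_a)$, and invokes Proposition~\ref{prop:BS_Ncurl} for the rescaled field $u\circ T_a$, which is the same computation in the other set of variables. Your treatment of $R^k_{{\rm BS},1}$ (Proposition~\ref{prop:polygrowth} plus the factor $a^K$ from differentiating through $T_a$) and of the main term together with $R^k_{{\rm BS},3}$ (mean value theorem and $\int_\rho^1\tfrac{1-a}{a}\,da\le\log(1/\rho)$) coincides with the paper's proof and is complete.

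The genuine gap is the step you postpone, the bound on $R^k_{{\rm BS},2}$: the plan you give for it (``import the $L^2$ estimates from the appendix and apply Minkowski'') does not close, and the obstruction is exactly the factor you flag and then set aside. In your coordinates $R^k_{{\rm BS},2}=\int_\rho^1\tfrac1a E_{\ell/a}(T_a(C_k))\,da$ with $E_{\ell'}={\rm BS}_{\ell'}[\omega]-u=u*k_{\ell'}$ and $k_{\ell'}=\Delta\bigl(\tfrac{1-\chi(\ell'\cdot)}{4\pi|\cdot|}\bigr)$. Two remarks. First, $\|k_{\ell'}\|_{L^2}\lesssim(\ell')^{3/2}$ yields, by Cauchy--Schwarz, an $L^\infty$ bound $\|E_{\ell'}\|_{L^\infty}\lesssim(\ell')^{3/2}\|u\|_{L^2}$, \emph{not} an $L^2$ bound: since $\int k_{\ell'}=-1$, the $L^2\to L^2$ norm of convolution with $k_{\ell'}$ is bounded below by $1$, so there is no $(\ell')^{3/2}$ gain in $L^2_{C_k}$, and your explanation of why the estimate is ``genuinely an $L^2$ statement'' is backwards. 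Second, and decisively, the effective scale in your representation is $\ell'=\ell/a$, so the gain degenerates as $a\downarrow\rho$: for $K=0$,
\[
|R^k_{{\rm BS},2}|\ \lesssim\ \ell^{3/2}\,\|u\|_{L^2}\int_\rho^1 a^{-1}\,a^{-3/2}\,da\ \sim\ \ell^{3/2}\rho^{-3/2}\,\|u\|_{L^2}\, ,
\]
and more generally $\int_\rho^1 a^{K-5/2}\,da\sim\rho^{K-3/2}$ blows up for $K\le 1$, giving $\rho^{K-3/2}$ instead of the claimed $\log(1/\rho)$. With the intended choice $\rho=1/N$, $\ell=N^{-\alpha}$, $\alpha\in(0,1)$, this is $N^{(1-\alpha)(3/2-K)}$, which is not even small, so the loss cannot be absorbed. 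You should know that the paper's own proof elides precisely this point: after applying Proposition~\ref{prop:BS_Ncurl} to $u\circ T_a$ it writes $\|u\|_{H^K}$ where $\|u\circ T_a\|_{H^K}$ should stand, silently discarding the Jacobian factor ($\|u\circ T_a\|_{L^2}=a^{-3/2}\|u\|_{L^2}$), which is the same $a^{-3/2}$ seen in the other coordinates. So your reconstruction matches the paper step for step, including its weak point; but as written your $R^k_{{\rm BS},2}$ step fails to deliver the stated estimate, and closing it for $K\le1$ would require an ingredient --- some source of smallness uniform down to $a\sim\rho$, e.g.\ decay or low-frequency information on $u$ --- that neither your proposal nor the paper's proof provides.
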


In the application of Lemma \ref{lemma:BSlog}, the parameters $\rho$ and $\ell$ will both depend on the discretization parameter $N$. More precisely, we will choose $\rho \sim \frac{1}{N}$ and $\ell$ as a suitable power of $1/N$.

\begin{remark}\label{r:bound-R_BS}
Before coming to the proof of the lemma, we remark that, by the Sobolev embedding $H^2 (\mathbb R^3) \subset L^\infty (\mathbb R^3)$, the estimates on the various pieces of $R^k_{{\rm BS}}$ imply that 
\[
|R^k_{{\rm BS}}| \leq \const \log (1/\rho) \big((\ell^{-1} \rho + |C_{k+1}-C_k|+|C_k-C_{k-1}|) \|Du\|_{L^\infty} + \ell^{3/2} \|u\|_{H^2}\big)
\]
\end{remark}

\begin{proof}[Proof of Lemma \ref{lemma:BSlog}]
    We split
    \begin{equation}\label{eq:split1}
        \langle \omega \rangle_k(C_k) 
        = 
        \int_0^\rho \omega\!\left(a C_k + (1-a)\tfrac{C_{k-1}+C_{k+1}}{2}\right)\, da
        +\int_\rho^1 \omega\!\left(a C_k + (1-a)\tfrac{C_{k-1}+C_{k+1}}{2}\right)\, da.
    \end{equation}
The first integrand, denoted $I(C_k)$ satisfies the estimate
\begin{equation}
    \| D^K_{C_k} I\|_{L^\infty} 
    \le \frac{\rho^{K+1}}{K+1}\| D^K \omega\|_{L^\infty}, \quad K\ge 0,
\end{equation}
therefore, by Proposition \ref{prop:polygrowth} we have
\begin{equation}
    \| D^K_{C_k} {\rm BS}_\ell[I]\|_{L^\infty}
    \le \const \ell^{-1}  \rho^{K+1}\| D^K \omega\|_{L^\infty}. 
\end{equation}
We set $R^k_{{\rm BS},1}:= {\rm BS}_\ell[I]$.

We now study ${\rm BS}_\ell[II]$, where $II$ is the second integrand in \eqref{eq:split1}. Using that $\omega = \nabla \times u$ and Proposition \ref{prop:BS_Ncurl} we obtain
\begin{equation}\label{eq:split2}
    \begin{split}
        {\rm BS}_\ell[II] & = \int_\rho^1 \frac{1}{a} {\rm BS}_\ell \left[\nabla \times (u(a \cdot + (1-a)\tfrac{C_{k-1}+C_{k+1}}{2})) \right] da
        \\
        &= \int_\rho^1 \frac{1}{a} u(aC_k + (1-a)\tfrac{C_{k-1}+C_{k+1}}{2})\, da + R^k_{{\rm BS},2}
    \end{split}
\end{equation}
where the error term $ R^k_{{\rm BS},2}$ satisfies the estimate
\begin{equation}
    \|  R^k_{{\rm BS},2} \|_{H^K_{C_k}} \le  \int_\rho^1\frac{1}{a} \const(k)\ell^{3/2} \| u \|_{H^k} da
    = \const(K)\ell^{3/2}\log(1/\rho) \|u\|_{H^K}.
\end{equation}

We finally study the main term in \eqref{eq:split2}. We rewrite it as
\begin{equation}
    \int_\rho^1 \frac{1}{a} u(aC_k + (1-a)\tfrac{C_{k-1}+C_{k+1}}{2})\, da
    =
    \log(1/\rho) u(C_k) + R^k_{{\rm BS},3}
\end{equation}
where
\begin{equation}
    R^k_{{\rm BS},3} := \int_\rho^1\frac{1}{a}\left(  u(aC_k + (1-a)\tfrac{C_{k-1}+C_{k+1}}{2}) - u(C_k)\right)da.
\end{equation}
Form the identity
\begin{equation}
    aC_k + (1-a) \frac{C_{k-1} + C_{k+1}}{2} - C_k = (1-a) \frac{\triangle C_k - \triangle C_{k-1}}{2},
\end{equation}
we obtain the uniform bound
\begin{equation}
    |R^k_{{\rm BS},3}|
    \le \log(1/\rho) (|\triangle C_{k-1}| + |\triangle C_k|) \| Du\|_{L^\infty}.
\end{equation}
\end{proof}

\section{Discretized vorticity, velocity, advection, and diffusion operators}
\label{sec:discretized1}

So far the calculus developed in the previous section was always applied to the circulation functional $\Gamma$. 
Here we turn to study functionals of the form
\begin{equation}
	\Psi[u,C] =  \exp \left\lbrace \frac{i \gamma }{\nu} \Gamma[u, C_0, \ldots, C_{N-1}] \right\rbrace \, .
\end{equation}
We eventually apply our computations to $\Psi [u (\cdot, t), C] = \Psi [ u, C, t]$ when $u$ is a (smooth enough) solution of the Navier-Stokes equations in order to derive a discretized version of the momentum equation for $\Psi[u, C, t]$, up to error terms which we will carefully control in terms of $N^{-\alpha}$ for a suitable positive $\alpha$.

\subsection{The Vorticity and Diffusion Operators}
For $0 \le k \le N-1$, we define the \emph{vorticity operator} by
\begin{equation}\label{eq:vor_op}
    \Omega_k := \frac{i\nu}{\gamma}\,\nabla_{C_{k+1}} \times \nabla_{C_k}.
\end{equation}

The terminology is justified as follows: when applied to loop-type functionals of the form
\begin{equation}
	\Psi[u,C] := \exp\!\left\{ \frac{i\gamma}{\nu}\, \Gamma[u, C_0, \ldots, C_{N-1}] \right\},
\end{equation}
one finds that
$\Omega_k \Psi = \omega (C_k)\, \Psi + \text{error}$.
The precise statement is the following.

\begin{lemma}\label{l:exponential-computation-1}
Assume $u\in C^2$ and $0\leq k \leq N-1$. Then
\begin{equation}\label{e:exponential-computation-1}
    \Omega_k \Psi[u, C_0, \ldots, C_{N-1}] = \langle \omega \rangle_k(C_k) \Psi  -  R^k_{\rm ad} \Psi  
    - \frac{i \gamma}{\nu}\Psi \nabla_{C_{k+1}}\Gamma  \times \nabla_{C_k}\Gamma\, ,
\end{equation}
where $\langle \omega\rangle_k$ is given by the formula \eqref{e:media-segmento} and $R^k_{\rm ad}$ is as in Lemma 
\ref{lemma:nabla x nabla 1}. 
\end{lemma}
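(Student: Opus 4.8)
The plan is to compute $\Omega_k \Psi$ directly by applying the definition $\Omega_k = \frac{i\nu}{\gamma} \nabla_{C_{k+1}} \times \nabla_{C_k}$ to the exponential $\Psi = \exp\{\frac{i\gamma}{\nu}\Gamma\}$ and then substituting the area-derivative identity from Lemma \ref{lemma:nabla x nabla 1}. First I would compute the inner gradient: since $\Psi$ depends on the vertices only through $\Gamma$, the chain rule gives
\begin{equation*}
\nabla_{C_k} \Psi = \frac{i\gamma}{\nu}\, \Psi\, \nabla_{C_k}\Gamma\, .
\end{equation*}
The main point is then to differentiate this again in $C_{k+1}$ and take the cross product, being careful that $\nabla_{C_{k+1}}\times(\cdots)$ acts both on the scalar prefactor $\frac{i\gamma}{\nu}\Psi$ and on the vector $\nabla_{C_k}\Gamma$.

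The key computation is to expand, using the product rule for curl (here $\Psi$ is scalar-valued and $\nabla_{C_k}\Gamma$ is a $\mathbb{R}^3$-valued function of the vertices),
\begin{equation*}
\nabla_{C_{k+1}} \times \big(\Psi\, \nabla_{C_k}\Gamma\big)
= \Psi\, \big(\nabla_{C_{k+1}} \times \nabla_{C_k}\Gamma\big)
+ \big(\nabla_{C_{k+1}}\Psi\big) \times \nabla_{C_k}\Gamma\, .
\end{equation*}
Multiplying through by $\frac{i\nu}{\gamma}$, the first term on the right contributes $\Psi\,\nabla_{C_{k+1}}\times\nabla_{C_k}\Gamma$, which by Lemma \ref{lemma:nabla x nabla 1} equals $\Psi\big(-\langle\omega\rangle_k(C_k) + R^k_{\rm ad}\big)$ after recalling that the identity \eqref{e:finale} reads $\nabla_{C_{k+1}}\times\nabla_{C_k}\Gamma = -\langle\omega\rangle_k(C_k) + R^k_{\rm ad}$. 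For the second term I would again use $\nabla_{C_{k+1}}\Psi = \frac{i\gamma}{\nu}\Psi\,\nabla_{C_{k+1}}\Gamma$, so that
\begin{equation*}
\frac{i\nu}{\gamma}\,\big(\nabla_{C_{k+1}}\Psi\big)\times\nabla_{C_k}\Gamma
= \frac{i\nu}{\gamma}\cdot\frac{i\gamma}{\nu}\,\Psi\,\big(\nabla_{C_{k+1}}\Gamma\times\nabla_{C_k}\Gamma\big)
= -\Psi\,\nabla_{C_{k+1}}\Gamma\times\nabla_{C_k}\Gamma\, .
\end{equation*}

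Collecting the two pieces and tracking signs yields
\begin{equation*}
\Omega_k\Psi = -\tfrac{i\nu}{\gamma}\cdot\tfrac{i\gamma}{\nu}\,\Psi\,\langle\omega\rangle_k(C_k)\cdot(\text{sign})\;\ldots,
\end{equation*}
and the careful bookkeeping of the factors $\frac{i\nu}{\gamma}$ against the $\frac{i\gamma}{\nu}$ coming from each differentiation of $\Psi$ is where I expect the only real risk of error; the two factors cancel to give $+\langle\omega\rangle_k(C_k)\Psi$ from the leading term, $-R^k_{\rm ad}\Psi$ from the error term, and the cross-product term retains a single surviving factor of $-\frac{i\gamma}{\nu}$, matching \eqref{e:exponential-computation-1}. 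There is no genuine analytic obstacle here: the statement is a purely algebraic consequence of the chain rule and of the already-established Lemma \ref{lemma:nabla x nabla 1}, and the hypothesis $u\in C^2$ is exactly what is needed for $\nabla_{C_k}\Gamma$ and its curl to be well-defined classical objects so that the product rule for curl applies. The main obstacle, such as it is, is simply to get every sign and every factor of $i$, $\nu$, $\gamma$ correct in the cancellation.
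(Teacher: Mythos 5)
Your proposal is correct and takes exactly the same route as the paper's proof: chain rule for $\nabla_{C_k}\Psi$, the curl product rule for $\nabla_{C_{k+1}}\times(\Psi\,\nabla_{C_k}\Gamma)$, and substitution of the identity \eqref{e:finale} from Lemma \ref{lemma:nabla x nabla 1}, which together yield \eqref{e:exponential-computation-1}. One bookkeeping caution: your intermediate displays multiply only by the outer factor $\frac{i\nu}{\gamma}$ and drop the inner factor $\frac{i\gamma}{\nu}$ produced by $\nabla_{C_k}\Psi = \frac{i\gamma}{\nu}\Psi\,\nabla_{C_k}\Gamma$ (the correct overall prefactor is $\frac{i\nu}{\gamma}\cdot\frac{i\gamma}{\nu}=-1$), so taken literally they assign the first term the value $\Psi\bigl(-\langle\omega\rangle_k(C_k) + R^k_{\rm ad}\bigr)$ and the second term the coefficient $-1$ rather than $-\frac{i\gamma}{\nu}$, both inconsistent with the (correct) final tally you state in your closing paragraph; it is that closing accounting, not the displayed equations, that matches \eqref{e:exponential-computation-1}.
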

\begin{proof}
We first use the chain rule to compute
\begin{align}
\Omega_k \Psi &= \frac{i\nu}{2\gamma} \nabla_{C_{k+1}} \times \left( \nabla_{C_k} \Psi \right)
= - \nabla_{C_{k+1}} \times \left( \Psi \nabla_{C_k} \Gamma \right)\nonumber\\
&= - \Psi \nabla_{C_{k+1}} \times \nabla_{C_k} \Gamma 
- \frac{i\gamma}{\nu} \Psi \nabla_{C_{k+1}} \Gamma \times \nabla_{C_k} \Gamma \, ,\label{e:chain}
\end{align}
and then apply Lemma 
\ref{lemma:nabla x nabla 1} to the first term.
\end{proof}

Likewise, we introduce a {\it diffusion operator} which is defined as 
\[
\mathcal{D}_k := 2 \nabla_{C_k} \times (\Omega_k)\, .
\]
Again the heuristic behind the latter definition is that 
\[
\mathcal{D}_k \Psi = (\nabla \times \omega) (C_k) \Psi + \text{error}\, . 
\]
The precise statement is given in the following 

\begin{lemma}\label{l:diffusion}
Assume $u\in C^3$ and $0\leq k \leq N-1$. Then
\begin{equation}\label{e:diffusion}
    \mathcal{D}_k \Psi[u, C_0, \ldots, C_{N-1}] = (\nabla \times \omega) (C_k) \Psi  + R^k_{\rm diff}\, ,
\end{equation}
where $R^k_{\rm diff}$ satisfies the estimate
\begin{equation}\label{e:diffusion-2}
    \begin{split}
      |R^k_{\rm diff}| &\le \const \|\omega \|_{C^2}( 1 + \|\omega\|_{C^2}(1 + L_{k-1,k+1})^2 )(|\triangle C_k| + |\triangle C_{k-1}|)  
      \\& \quad 
      + \const \frac{\gamma}{\nu} \|\omega\|_{C^2}^2|\triangle C_k|( 1 + |\triangle C_{k-1}| + \|\omega \|_{C^2})(|\triangle C_k| + |\triangle C_{k-1}|) 
    \end{split}
\end{equation}
where $L_{p,q}$ is defined in \eqref{eq:L}. 
\end{lemma}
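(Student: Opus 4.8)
The plan is to obtain \eqref{e:diffusion} by applying the operator $2\nabla_{C_k}\times$ directly to the identity for $\Omega_k\Psi$ furnished by Lemma~\ref{l:exponential-computation-1}, namely
\[
\Omega_k\Psi = \langle\omega\rangle_k(C_k)\,\Psi - R^k_{\rm ad}\,\Psi - \tfrac{i\gamma}{\nu}\,\Psi\,\nabla_{C_{k+1}}\Gamma\times\nabla_{C_k}\Gamma,
\]
and then sorting the outcome into a single leading term plus errors. Throughout I will repeatedly use the Leibniz rule $\nabla_{C_k}\times(f\mathbf V)=(\nabla_{C_k}f)\times\mathbf V+f\,\nabla_{C_k}\times\mathbf V$ together with the chain rule $\nabla_{C_k}\Psi=\tfrac{i\gamma}{\nu}(\nabla_{C_k}\Gamma)\,\Psi$.

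The conceptual heart is the leading term $2\nabla_{C_k}\times(\langle\omega\rangle_k\Psi)$. Its ``$\Psi$-undifferentiated'' part is $2\Psi\,\nabla_{C_k}\times\langle\omega\rangle_k$, and here the key observation is that differentiating the averaged vorticity $\langle\omega\rangle_k(C_k)=\int_0^1\omega(aC_k+(1-a)\tfrac{C_{k-1}+C_{k+1}}{2})\,da$ in $C_k$ brings out, by the chain rule, a factor $a$ inside the integral:
\[
2\,\nabla_{C_k}\times\langle\omega\rangle_k=2\int_0^1 a\,(\nabla\times\omega)\!\left(aC_k+(1-a)\tfrac{C_{k-1}+C_{k+1}}{2}\right)da.
\]
Since $2\int_0^1 a\,da=1$, a first-order Taylor expansion of $\nabla\times\omega$ about $C_k$ collapses this weighted average to $(\nabla\times\omega)(C_k)$, the displacement from $C_k$ being $(1-a)\tfrac{\triangle C_k-\triangle C_{k-1}}{2}$; the Taylor remainder is then bounded by $\const\|\omega\|_{C^2}(|\triangle C_k|+|\triangle C_{k-1}|)$ and is absorbed into $R^k_{\rm diff}$. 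This produces the announced main term $(\nabla\times\omega)(C_k)\Psi$.

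All remaining contributions enter $R^k_{\rm diff}$, and the plan is to estimate each using the derivative bounds already in hand. These are: (i) the ``$\Psi$-differentiated'' part of the leading term, $2\tfrac{i\gamma}{\nu}\Psi\,(\nabla_{C_k}\Gamma\times\langle\omega\rangle_k)$; (ii) the two pieces of $-2\nabla_{C_k}\times(R^k_{\rm ad}\Psi)$, namely $-2\Psi\,\nabla_{C_k}\times R^k_{\rm ad}$ and $-2\tfrac{i\gamma}{\nu}\Psi\,(\nabla_{C_k}\Gamma\times R^k_{\rm ad})$; and (iii) the two pieces of $-2\tfrac{i\gamma}{\nu}\nabla_{C_k}\times(\Psi\,\nabla_{C_{k+1}}\Gamma\times\nabla_{C_k}\Gamma)$, one being the triple product $\nabla_{C_k}\Gamma\times(\nabla_{C_{k+1}}\Gamma\times\nabla_{C_k}\Gamma)$ and the other the term in which $\nabla_{C_k}$ falls on the two gradient factors. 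For (i)--(iii) I will invoke Lemma~\ref{l:nablaGamma_bound} to control $\nabla_{C_k}\Gamma$, $\nabla_{C_{k+1}}\Gamma$ and their first $C_k$-derivatives (using the cyclic shift of the indices and the elementary bound $|C_{k+1}-C_{k-1}|\le|\triangle C_k|+|\triangle C_{k-1}|$), and Lemma~\ref{lemma:nabla x nabla 1} to control $R^k_{\rm ad}$ and $\nabla_{C_k}R^k_{\rm ad}$. Grouping the $\gamma/\nu$-free contributions from (ii) and the Taylor remainder yields the first line of \eqref{e:diffusion-2}, while the contributions carrying factors of $\gamma/\nu$ --- quadratic and triple products of gradients of $\Gamma$ --- assemble into the second line.

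The main obstacle is the vector-calculus accounting in item (iii): one must expand $\nabla_{C_k}\times(\nabla_{C_{k+1}}\Gamma\times\nabla_{C_k}\Gamma)$ where \emph{both} factors depend on $C_k$, reduce the resulting triple products through the identity $\mathbf b\times(\mathbf a\times\mathbf b)=\mathbf a\,|\mathbf b|^2-\mathbf b\,(\mathbf a\cdot\mathbf b)$, and keep track of the homogeneity in the edge lengths so that each piece lands inside the claimed bound. Since $\nabla_{C_k}\Gamma$ and $\nabla_{C_{k+1}}\Gamma$ are each of first order in the edge lengths by Lemma~\ref{l:nablaGamma_bound}, these products are genuinely of quadratic and cubic order, hence small; so the difficulty is the bookkeeping rather than any conceptual point. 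The leading collapse of the second paragraph is, by contrast, the only structurally delicate step.
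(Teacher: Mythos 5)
Your route is the paper's route: apply $2\nabla_{C_k}\times$ to the identity of Lemma~\ref{l:exponential-computation-1}, collapse the weighted average $2\int_0^1 a\,(\nabla\times\omega)\bigl(aC_k+(1-a)\tfrac{C_{k-1}+C_{k+1}}{2}\bigr)\,da$ onto $(\nabla\times\omega)(C_k)$ by Taylor expansion (the factor $a$ from the chain rule and $2\int_0^1 a\,da=1$ are exactly the paper's observation), and estimate everything else via Lemma~\ref{l:nablaGamma_bound} and Lemma~\ref{lemma:nabla x nabla 1}. In one respect your accounting is more complete than the paper's: the Leibniz term you call (i), namely $2\tfrac{i\gamma}{\nu}\Psi\,(\nabla_{C_k}\Gamma)\times\langle\omega\rangle_k$, is genuinely present in $2\nabla_{C_k}\times(\langle\omega\rangle_k\Psi)$, while the paper's decomposition \eqref{eq:split4} omits it (there $2\nabla_{C_k}\times\langle\omega\rangle_k$ denotes only the curl of the averaged vorticity, as the subsequent estimate makes clear).

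But precisely this term breaks your final assembly step. By Lemma~\ref{l:nablaGamma_bound} its natural bound is
\[
\const\,\tfrac{\gamma}{\nu}\,\|\omega\|_{C^1}^2\,|C_{k+1}-C_{k-1}|\,(1+|\triangle C_{k-1}|)\, ,
\]
which is \emph{first} order in the edge lengths and carries the factor $\gamma/\nu$. The second line of \eqref{e:diffusion-2}, into which you claim it falls, is \emph{second} order (note the extra prefactor $|\triangle C_k|$), and the first line carries no $\gamma/\nu$; hence for $\gamma/\nu$ large and $|\triangle C_k|\ll |\triangle C_{k-1}|$ this term is not dominated by the right-hand side of \eqref{e:diffusion-2}, and the estimate exactly as stated does not follow from your (correct) decomposition. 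To be fair, this is as much a defect of the stated bound as of your bookkeeping: the pieces the paper does keep already produce contributions of the same type (for instance $\tfrac{\gamma}{\nu}\,|D_{C_k}\nabla_{C_{k+1}}\Gamma|\,|\nabla_{C_k}\Gamma|$, again first order with a $\gamma/\nu$ factor, and the triple product, which enters with $(\gamma/\nu)^2$), so the paper's own estimates do not literally sum to \eqref{e:diffusion-2} either. What both arguments actually prove is a remainder bound that is first order in $L_{k-1,k+1}$ with a constant depending polynomially on $\gamma/\nu$ and $\|\omega\|_{C^2}$; under the standing assumption \eqref{eq:cond M/N} this is all that is used in Theorem~\ref{t:loop-equation-discretized}, so nothing downstream is affected. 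As written, however, your claim that term (i) ``assembles into the second line'' is the one step of the proposal that fails, and you should either add the corresponding first-order $\gamma/\nu$ term to the right-hand side of \eqref{e:diffusion-2} or explain why it can be discarded.
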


\begin{proof}
From \eqref{e:exponential-computation-1}, it follows that
\begin{equation}\label{eq:split4}
\begin{split}
    \mathcal{D}_k\Psi 
    & = (\nabla \times \omega)(C_k) \Psi
    \\&+ (2 \nabla_{C_k} \times \langle \omega \rangle_k(C_k) - (\nabla \times \omega)(C_k)) \Psi
    \\
    & - 2\nabla_{C_k} \times \left( R^k_{\rm ad}\Psi + \frac{i\gamma}{\nu} \Psi \nabla_{C_{k+1}}\Gamma \times \nabla_{C_k}\Gamma \right).
\end{split}
\end{equation}
The error $R_{\mathrm{diff}}^k$ is defined as the sum of the terms in the last two lines of \eqref{eq:split4}. 
Notice that
\begin{equation}
    \begin{split}
      |2 \nabla_{C_k}& \times \langle \omega \rangle_k(C_k) - (\nabla \times \omega)(C_k)|
      \\&=
      \Big| \int_0^1 2a (\nabla \times \omega(aC_k + (1-a)\tfrac{C_{k-1} + C_{k+1}}{2}) - \nabla\times \omega (C_k)) da \Big|
      \\& \le 
      \|D^2\omega\|_{L^\infty}(|C_{k+1} - C_k| + |C_k - C_{k-1}|)
    \end{split}
\end{equation}
A straightforward calculation, combined with \eqref{eq:nablaGamma_bound} and Lemma~\ref{lemma:nabla x nabla 1}, shows that
\begin{equation}
\begin{split}
    | \nabla_{C_k} \times (R_{\rm ad}^k \Psi)|
    & \le \frac{\gamma}{\nu} |\nabla_{C_k} \Gamma| |R^k_{\rm ad}| + |D_{C_k} R^k_{\rm ad}| 
    \\&
    \le \const  \frac{\gamma}{\nu}\| D \omega\|_{L^\infty}^2|C_{k+1} - C_{k-1}|^2(1 + |C_k - C_{k-1}|)
    \\& \qquad  + \const \| D^2 \omega \|_{L^\infty} |C_{k+1} - C_{k-1}|.
\end{split}
\end{equation}
Using again \eqref{eq:nablaGamma_bound}, we conclude that
\begin{equation}
    \begin{split}
      |\nabla_{C_k}& \times (\Psi \nabla_{C_{k+1}} \Gamma \times \nabla_{C_k} \Gamma)|
      \\& \le \frac{\gamma}{\nu} |\nabla_{C_k}\Gamma|^2 |\nabla_{C_{k+1}}\Gamma| 
       + |D_{C_k}\nabla_{C_k}\Gamma||\nabla_{C_{k+1}}\Gamma| + |D_{C_{k}}\nabla_{C_{k+1}}\Gamma||\nabla_{C_k}\Gamma|
       \\&
        \le \const \| \omega \|_{C^2}^2 (1 + L_{k-1,k+1})
        \left( \frac{\gamma}{\nu}\|\omega\|_{C^1} |C_{k+1} - C_k| + 1 + L_{k-1,k+1}\right)|C_{k+1} - C_{k-1}|.
    \end{split}
\end{equation}
\end{proof}

\subsection{The Velocity Operator}
\label{subsec:vel_op}

For $0 \le k \le N-1$, and for parameters $0 < \ell, \rho < 1$ (to be chosen later in terms of $N$), we define the \emph{velocity operator} by
\begin{equation}\label{eq:vel_op}
    U_k := \frac{1}{\log(1/\rho)}\, {\rm BS}_\ell \circ \Omega_k,
\end{equation}
where the regularized Biot-Savart operator ${\rm BS}_\ell$ (see \eqref{eq:BSN1}) acts on the variable $C_k$. 
Recall that $\Omega_k$ denotes the vorticity operator from \eqref{eq:vor_op}. 
For brevity, we omit the explicit dependence of $U_k$ on the parameters $\rho$ and $\ell$.

As in the vorticity case, the terminology is justified by the fact that
\[
    U_k \Psi = u(C_k)\,\Psi + \text{error}.
\]
The precise statement is given below.

\begin{lemma}\label{lemma:vel_op}
    Let $0\le k \le N-1$ and $0< \ell, \rho<1$. For every $u\in C^2$ we have
    \begin{equation}
        U_k \Psi[C_0, \ldots, C_{N-1}]
        =
        u(C_k) \Psi + \frac{R^k_{\rm BS}}{\log(1/\rho)} \Psi + R^k_{\rm vel}
    \end{equation}
    where $R^k_{\rm BS}$ is defined in Lemma \ref{lemma:BSlog} and the velocity error $R^k_{\rm vel}:= R^k_{{\rm vel},1} + R^k_{{\rm vel},2}$ satisfies the estimates:
    \begin{align}
        & |R^k_{{\rm vel},1}| \le \const \ell^{-1} |C_{k+1}-C_{k-1}| \|D\omega\|_{L^\infty} + \const \frac{\gamma}{\nu} \ell^{-2} |C_{k+1}- C_{k-1}||C_{k+2}-C_k| \|D\omega\|_{L^\infty}^2\, .
        \\
        & |R^k_{{\rm vel},2}| \le \const \frac{\gamma}{\nu} \ell^{-1} \log (1/\rho) |C_{k+1}-C_{k-1}| \|D \omega\|_{L^\infty}\, .
    \end{align}
\end{lemma}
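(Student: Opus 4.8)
The plan is to substitute the identity of Lemma~\ref{l:exponential-computation-1} for $\Omega_k\Psi$ into the definition of $U_k$ and then track how the regularized Biot--Savart operator, acting in the variable $C_k$, transforms each of the three resulting pieces. Writing $m:=\tfrac{C_{k-1}+C_{k+1}}{2}$, linearity of ${\rm BS}_\ell$ gives
\[
U_k\Psi = \frac{1}{\log(1/\rho)}{\rm BS}_\ell[\langle\omega\rangle_k\Psi] - \frac{1}{\log(1/\rho)}{\rm BS}_\ell[R^k_{\rm ad}\Psi] - \frac{i\gamma}{\nu\log(1/\rho)}{\rm BS}_\ell[\Psi\,\nabla_{C_{k+1}}\Gamma\times\nabla_{C_k}\Gamma].
\]
I would extract $u(C_k)\Psi + \tfrac{R^k_{\rm BS}}{\log(1/\rho)}\Psi$ from the first term and collect everything else into $R^k_{\rm vel}$.

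For the first term I would rerun the three-step splitting in the proof of Lemma~\ref{lemma:BSlog}, now carrying the scalar factor $\Psi$ through the operator. The one genuinely new point is that $\Psi$ depends on the integration variable, with $\nabla_{C_k}\Psi=\tfrac{i\gamma}{\nu}\Psi\,\nabla_{C_k}\Gamma$. Using the curl identity $\omega(aC_k+(1-a)m)=\tfrac1a\nabla_{C_k}\times u(aC_k+(1-a)m)$ exactly as in \eqref{eq:split2} and integrating by parts against the kernel, the Leibniz rule for the curl splits the integrand into a term where the derivative hits $u$ and a term where it hits $\Psi$. The former reproduces the computation of Lemma~\ref{lemma:BSlog} multiplied by $\Psi$, hence equals $\log(1/\rho)\,u(C_k)\Psi+R^k_{\rm BS}\Psi$; after normalization this is precisely the claimed leading term plus $\tfrac{R^k_{\rm BS}}{\log(1/\rho)}\Psi$. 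The latter produces $-\tfrac{i\gamma}{\nu}\Psi\,\nabla_{C_k}\Gamma\times u(aC_k+(1-a)m)$, and estimating it by the operator bound $\|{\rm BS}_\ell\|_{L^\infty\to L^\infty}\lesssim\ell^{-1}$ from Proposition~\ref{prop:polygrowth}, the bound $|\nabla_{C_k}\Gamma|\lesssim|C_{k+1}-C_{k-1}|\,\|\omega\|_{C^1}$ from Lemma~\ref{l:nablaGamma_bound}, and $\int_\rho^1\tfrac{da}{a}=\log(1/\rho)$, gives $R^k_{{\rm vel},2}$.

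The two remaining terms constitute $R^k_{{\rm vel},1}$. For ${\rm BS}_\ell[R^k_{\rm ad}\Psi]$ I would simply use $|\Psi|=1$, the uniform-in-$C_k$ bound $|R^k_{\rm ad}|\le\const\|D\omega\|_{L^\infty}|C_{k+1}-C_{k-1}|$ from \eqref{est:R_ad}, and the $L^\infty\to L^\infty$ bound $\lesssim\ell^{-1}$, which yields the first summand of $R^k_{{\rm vel},1}$. For the quadratic term ${\rm BS}_\ell[\Psi\,\nabla_{C_{k+1}}\Gamma\times\nabla_{C_k}\Gamma]$ I would instead use the integration-by-parts representation \eqref{eq:BSN2}, whose kernel $\chi(\ell\,\cdot\,)/|\cdot|$ has $L^1$-norm of order $\ell^{-2}$; feeding in $|\nabla_{C_{k+1}}\Gamma|\lesssim|C_{k+2}-C_k|\,\|\omega\|_{C^1}$, $|\nabla_{C_k}\Gamma|\lesssim|C_{k+1}-C_{k-1}|\,\|\omega\|_{C^1}$ together with the derivative bounds of Lemma~\ref{l:nablaGamma_bound} produces the second summand, with its characteristic $\tfrac{\gamma}{\nu}\ell^{-2}$ prefactor.

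The main obstacle is that $\nabla_{C_k}\Gamma$ and $\nabla_{C_{k+1}}\Gamma$ are not bounded uniformly in the Biot--Savart integration variable: by Lemma~\ref{l:nablaGamma_bound} they carry a factor $(1+|C_k-C_{k-1}|)$ that grows as the $k$-th vertex moves. Consequently the $L^\infty\to L^\infty$ estimate for ${\rm BS}_\ell$ cannot be invoked globally, and the argument must exploit the compact support (radius $\sim\ell^{-1}$) of the cutoff $\chi(\ell\,\cdot\,)$ to restrict the integration to a controlled ball around $C_k$ and then balance the growth against the $L^1$-norm of the kernel. Carrying this out so that the resulting errors depend only on the local edge lengths $|C_{k+1}-C_{k-1}|$, $|C_{k+2}-C_k|$ and not on spurious global factors is the delicate part of the computation.
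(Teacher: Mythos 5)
Your overall skeleton (substitute Lemma \ref{l:exponential-computation-1} into the definition of $U_k$, isolate ${\rm BS}_\ell[\langle\omega\rangle_k\Psi]$, and dump the rest into $R^k_{{\rm vel},1}$) and your treatment of $R^k_{{\rm vel},1}$ — bounding $R^k_{\rm ad}\Psi$ via \eqref{est:R_ad} and the quadratic term $\frac{i\gamma}{\nu}\Psi\,\nabla_{C_{k+1}}\Gamma\times\nabla_{C_k}\Gamma$ via Lemma \ref{l:nablaGamma_bound}, then applying Proposition \ref{prop:polygrowth} — coincide with the paper's. The gap is in your handling of the main term. You propose to carry $\Psi$ through the curl by the Leibniz rule and then claim that the piece in which the derivative hits $u$ ``reproduces the computation of Lemma \ref{lemma:BSlog} multiplied by $\Psi$''. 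That computation rests, through Proposition \ref{prop:BS_Ncurl}, on the fact that Biot--Savart inverts the curl only on \emph{divergence-free} fields: for a general decaying field $v$ one has ${\rm BS}[\nabla\times v]=v-\nabla\Delta^{-1}({\rm div}\,v)$, i.e.\ one recovers only the Leray projection of $v$. The field you feed in, $x\mapsto u\big(ax+(1-a)\tfrac{C_{k-1}+C_{k+1}}{2}\big)\,\Psi[\ldots,x,\ldots]$, is \emph{not} divergence-free in $x$ even when $u$ is: its divergence equals $\frac{i\gamma}{\nu}\Psi\,u\cdot\nabla_{C_k}\Gamma$, which is generically nonzero. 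So the asserted identity ``$=\log(1/\rho)\,u(C_k)\Psi+R^k_{\rm BS}\Psi$'' is false as written: a nonlocal gradient correction (the regularized analogue of $-\nabla\Delta^{-1}{\rm div}(u\Psi)$) appears, it is of the same order of magnitude as your other errors, and controlling it would require redoing Proposition \ref{prop:BS_Ncurl} for non-solenoidal inputs — none of which is in your proposal. A secondary symptom of the same move: your Leibniz term $\frac{i\gamma}{\nu}\Psi\,\nabla_{C_k}\Gamma\times u$ necessarily puts a factor $\|u\|_{L^\infty}$ into the bound for $R^k_{{\rm vel},2}$, whereas the estimate claimed in the lemma involves only $\|D\omega\|_{L^\infty}$.

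The paper avoids differentiating the product altogether: it freezes $\Psi$ at the center point, writing ${\rm BS}_\ell[\langle\omega\rangle_k\Psi]=\Psi\,{\rm BS}_\ell[\langle\omega\rangle_k]+{\rm BS}_\ell[\langle\omega\rangle_k\,\triangle\Psi]$, where $\triangle\Psi(x):=\Psi[\ldots,x,\ldots]-\Psi[\ldots,C_k,\ldots]$. Since $\Psi$ evaluated at $C_k$ is a constant with respect to the convolution variable, the first term invokes Lemma \ref{lemma:BSlog} verbatim (applied to $u$ itself, where the solenoidality hypothesis is available), and the second term is declared to be $\log(1/\rho)\,R^k_{{\rm vel},2}$ and estimated by the mean value theorem, $|\triangle\Psi(x)|\le\const\frac{\gamma}{\nu}|x-C_k|\,\big|\nabla_{C_k}\Gamma[\ldots,sC_k+(1-s)x,\ldots]\big|$, combined with Lemma \ref{l:nablaGamma_bound} and Proposition \ref{prop:polygrowth}; this also resolves, in one stroke, the polynomial-growth issue you flag at the end as ``the delicate part'' but leave open. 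If you want to salvage your route you must insert and estimate the Leray-projection correction; the paper's freezing argument is the cleaner fix.
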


\begin{proof}
We apply $\tfrac{1}{\log(1/\rho)}{\rm BS}_\ell$ with respect to the variable $C_k$ to both sides of \eqref{e:exponential-computation-1}, and set
\begin{equation}
    R^k_{{\rm vel},1} := -\frac{1}{\log(1/\rho)}\, {\rm BS}_\ell\left[
        R^k_{\rm ad}\Psi 
        + \frac{i\gamma}{\nu}\, \Psi \nabla_{C_{k+1}}\Gamma \times \nabla_{C_k}\Gamma
    \right].
\end{equation}
We thus obtain the identity
\begin{equation}\label{eq:split3}
    U_k \Psi[C_0,\ldots,C_{N-1}] 
    = \frac{1}{\log(1/\rho)}\, {\rm BS}_\ell[\langle \omega \rangle_k \Psi] 
    + R^k_{\rm vel,1}\, .
\end{equation}
To study the first term in \eqref{eq:split3} we introduce the notation
\begin{equation}
    \triangle \Psi(x) 
    := \Psi[C_0, \ldots, x , \ldots, C_{N-1}]
       - \Psi[C_0, \ldots, C_k, \ldots, C_{N-1}],
    \quad x \in \R^3,
\end{equation}
for the discrete difference of $\Psi$ in the $k$-th variable. We then employ Lemma~\ref{lemma:BSlog} to write
\begin{equation}
\begin{split}
   {\rm BS}_\ell[\langle \omega \rangle_k \Psi]
   &= {\rm BS}_\ell[\langle \omega \rangle_k] \Psi 
      + {\rm BS}_\ell[\langle \omega \rangle_k \, \triangle \Psi] \\
   &= \log(1/\rho)\, u(C_k)\,\Psi 
      + R^k_{\rm BS}\,\Psi 
      + {\rm BS}_\ell[\langle \omega \rangle_k \, \triangle \Psi]\, ,
\end{split}
\end{equation}
If we therefore define
\begin{equation}
    R^k_{{\rm vel},2} : = \frac{1}{\log(1/\rho)} {\rm BS}_\ell[\langle \omega \rangle_k \, \triangle \Psi]
\end{equation}
and 
\[R^k_{\rm vel}:= R^k_{{\rm vel},1} + R^k_{{\rm vel},2}\, 
\]
we reach the claimed identity for $U_k \Psi$ and to complete the proof we need to estimate the two error terms $R^k_{{\rm vel},i}$.

\medskip
\noindent
{\it Estimate on $R^k_{{\rm vel},1}$.}
Using Lemma \ref{l:nablaGamma_bound} and Lemma \ref{lemma:nabla x nabla 1} we get
\begin{equation}
\begin{split}
    &\left|R^k_{\rm ad}\Psi + \frac{i\gamma}{\nu}\, \Psi \nabla_{C_{k+1}} \Gamma \times \nabla_{C_k}\Gamma \right| 
    \le 
    \const |C_{k+1} - C_{k-1}| \| D \omega \|_{L^\infty}
    \\& \qquad + \const \frac{\gamma}{\nu} |C_{k+1} - C_{k-1}| |C_{k+2}-C_k|\|D\omega\|_{L^\infty}^2 \, . 
\end{split}
\end{equation}
Hence, by Proposition \ref{prop:polygrowth} we have
\begin{equation}
      | R^k_{{\rm vel},1} |
    \le
    \const \ell^{-1} |C_{k+1}-C_{k-1}| \|D\omega\|_{L^\infty} + \const \frac{\gamma}{\nu}\ell^{-2} |C_{k+1}- C_{k-1}||C_{k+2}-C_k|
    \|D\omega\|_{L^\infty}^2\, .
\end{equation}

\medskip
\noindent
{\it Estimate on $R^k_{{\rm vel},2}$.}
By the mean value theorem, there exists $s\in [0,1]$ such that
\begin{equation}
    \triangle \Psi(x) = \frac{i\gamma}{\nu} (x-C_k) \cdot \nabla_{C_k}\Gamma[C_0, \ldots, s C_k + (1-s) x, \ldots, C_{N-1}]\, \Psi.
\end{equation}
By Lemma \ref{l:nablaGamma_bound}, we deduce
\begin{equation}
\begin{split}
    |\triangle \Psi(x)| 
    &\le \const \frac{\gamma}{\nu} |x-C_k||C_{k+1} - C_{k-1}|(1 + |sC_k + (1-s)x - C_{k-1}|) \| D\omega \|_{L^\infty}
    \\& \le \const \frac{\gamma}{\nu} |x-C_k||C_{k+1} - C_{k-1}|(|x - C_k| + |C_k - C_{k-1}|) \| D\omega \|_{L^\infty}.
\end{split}    
\end{equation}
Finally, we apply Proposition \ref{prop:polygrowth} to deduce
\begin{equation}
    | R^k_{{\rm vel},2} |
    \le
    \const \frac{\gamma}{\nu} \frac{\ell^{-2}}{\log(1/\rho)}|C_{k+1} - C_{k-1}|(1 + |\triangle C_{k-1}|) \| D \omega \|_{L^\infty}.
\end{equation}
The estimates for the derivatives follow analogously.    
\end{proof}

\subsection{The Advection Operator}
For $0 \le k \le N-1$, and for parameters $0 < \ell, \rho < 1$ (to be chosen later in terms of $N$), 
to reconstruct the nonlinear term $\omega \times u$ appearing in the Navier--Stokes equations \eqref{eq:NS2},
we define the \emph{advection operator} by
\begin{equation}
    \Omega_{k+3} \times U_k
    := 
    \varepsilon_{\alpha \ell j}\, e_\alpha \, \Omega_{k+3}^\ell \circ U_k^j,
\end{equation}
where $\Omega_{k+3}^\ell$ and $U_k^j$ denote the $\ell$th and $j$th components of 
$\Omega_{k+3}$ and $U_k$, respectively, and $(e_\alpha)_{\alpha=1}^3$ is the canonical basis of $\R^3$.

\begin{lemma}\label{l:advect-operator}
Let $0\le k \le N-1$ and $0< \ell, \rho<1$. For every $u\in C^2$ we have
\begin{equation}
        (\Omega_{k+3} \times U_k) \Psi[C_0, \ldots, C_{N-1}]
        =
        (\omega \times u)(C_k)\Psi + R^k_{\rm adv}
\end{equation}
where, the advection error $R^k_{\rm adv}$ satisfies
\begin{align*}
|R^k_{\rm adv}| \leq & \const L_{k-1,k+3} \|u\|_{L^\infty} \|D\omega\|_{L^\infty}
\Big(1+ L_{k-1,k+3}\frac{\gamma}{\nu} \|D\omega\|_{L^\infty}\Big)\\
& + \const (1+L_{k-1,k+3}) \big((\ell^{-1} \rho + L_{k-1,k+3}) \|Du\|_{L^\infty} + \ell^{3/2} \|u\|_{H^2}\big) \|\omega\|_{C^1}\\
& + \const \frac{\gamma}{\nu} (1+L_{k-1,k+3}) \ell^{-1} \log (1/\rho) L_{k-1,k+3} \|D\omega\|_{L^\infty} (1+ \ell^{-1} L_{k-1,k+3}\|D\omega\|_{L^\infty})
\|\omega\|_{C^1}.\\
\end{align*}
\end{lemma}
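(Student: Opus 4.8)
The plan is to expand the composition $(\Omega_{k+3} \times U_k)\Psi$ by applying the two operators in sequence and carefully tracking each error term produced. First I would apply the velocity operator $U_k$ to $\Psi$ using Lemma~\ref{lemma:vel_op}, obtaining
\[
U_k \Psi = u(C_k)\Psi + \tfrac{R^k_{\rm BS}}{\log(1/\rho)}\Psi + R^k_{\rm vel}\, ,
\]
and then apply the vorticity operator $\Omega_{k+3}$ (in its curl form $\nabla_{C_{k+4}}\times\nabla_{C_{k+3}}$, up to the normalizing constant) to each of these three pieces. The leading term $\Omega_{k+3}(u(C_k)\Psi)$ should, by the chain rule and Lemma~\ref{l:exponential-computation-1}, reproduce $(\omega\times u)(C_k)\Psi$ plus manageable errors; here the point is that $u(C_k)$ does not depend on the variables $C_{k+3}, C_{k+4}$ (since $\Omega_{k+3}$ differentiates only in $C_{k+3}, C_{k+4}$, which are disjoint from $C_{k-1}, C_k, C_{k+1}$ for these index ranges), so it passes through $\Omega_{k+3}$ as a multiplicative factor. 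The crucial structural observation making this work is that $\Omega_{k+3}$ acts only on indices $\{k+2,k+3,k+4\}$ while $U_k$ involves only indices $\{k-1,k,k+1\}$ through the Biot--Savart operator in $C_k$ and the area-derivative $\Omega_k$; the index gap (achieved by using $k+3$ rather than $k$) is precisely what decouples the two operators and avoids the problematic self-interaction terms.

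Next I would estimate the contributions from applying $\Omega_{k+3}$ to the error terms $R^k_{\rm BS}\Psi/\log(1/\rho)$ and $R^k_{\rm vel}$. For the first, Remark~\ref{r:bound-R_BS} gives a pointwise bound on $R^k_{\rm BS}$, and applying $\Omega_{k+3}=\tfrac{i\nu}{\gamma}\nabla_{C_{k+4}}\times\nabla_{C_{k+3}}$ produces, via the chain rule on $\Psi$, a factor involving $\nabla_{C_{k+3}}\Gamma$ and $\nabla_{C_{k+4}}\Gamma$ (bounded by Lemma~\ref{l:nablaGamma_bound}) together with the curl $\nabla_{C_{k+4}}\times\nabla_{C_{k+3}}\Gamma$ (controlled by Lemma~\ref{lemma:nabla x nabla 1}). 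This generates the terms carrying $\ell^{3/2}\|u\|_{H^2}$ and $(\ell^{-1}\rho+L_{k-1,k+3})\|Du\|_{L^\infty}$ in the stated bound, each multiplied by a vorticity factor $\|\omega\|_{C^1}$ coming from $\Omega_{k+3}$. For the second error $R^k_{\rm vel}$, I would insert the two estimates from Lemma~\ref{lemma:vel_op} and again multiply by the cost of $\Omega_{k+3}$, which accounts for the last displayed line of the claimed bound with its factor $\ell^{-1}\log(1/\rho)$ and the quadratic-in-$\|D\omega\|_{L^\infty}$ correction.

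I would then collect the genuine leading error from $\Omega_{k+3}(u(C_k)\Psi)$. Writing $\Omega_{k+3}(u(C_k)\Psi)=u(C_k)\,(\Omega_{k+3}\Psi)$ and invoking Lemma~\ref{l:exponential-computation-1} for $\Omega_{k+3}\Psi$, one obtains $\langle\omega\rangle_{k+3}(C_{k+3})\times u(C_k)\,\Psi$ minus the area-derivative error $R^{k+3}_{\rm ad}$ and minus the quadratic term $\tfrac{i\gamma}{\nu}\nabla_{C_{k+4}}\Gamma\times\nabla_{C_{k+3}}\Gamma$. The difference between $\langle\omega\rangle_{k+3}(C_{k+3})$ and $\omega(C_k)$ must be bounded: the average of $\omega$ over the segment at index $k+3$ differs from $\omega(C_k)$ by a displacement controlled by $L_{k-1,k+3}\|D\omega\|_{L^\infty}$, which together with the $\|u\|_{L^\infty}$ factor yields the first line of the claimed estimate; the quadratic term in $\nabla\Gamma$ produces the companion contribution with the extra $\tfrac{\gamma}{\nu}\|D\omega\|_{L^\infty}$ factor. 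The main obstacle I anticipate is not any single estimate but the bookkeeping: ensuring that the index separation genuinely decouples the operators (so that no uncontrolled cross terms from differentiating $u(C_k)$ in the $\Omega_{k+3}$ variables appear), and then verifying that every error term, after being multiplied by the operator norms of $\Omega_{k+3}$ and the Biot--Savart factors, assembles exactly into the three-line bound — in particular tracking the powers of $\ell^{-1}$, $\rho$, $\log(1/\rho)$, and $L_{k-1,k+3}$ correctly and confirming that the replacement $\langle\omega\rangle_{k+3}\mapsto\omega(C_k)$ costs only the stated geometric factor.
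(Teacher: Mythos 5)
Your proposal is correct and follows essentially the same route as the paper's proof: expand $U_k\Psi$ via Lemma~\ref{lemma:vel_op}, apply $\Omega_{k+3}$ to each of the three pieces, use the index separation to pass $u(C_k)$ and $R^k_{\rm BS}$ through the operator as multiplicative factors, invoke Lemma~\ref{l:exponential-computation-1} at index $k+3$ for the leading term, and pay $\const\, L_{k-1,k+3}\|u\|_{L^\infty}\|D\omega\|_{L^\infty}$ (times the stated quadratic correction) for replacing $\langle\omega\rangle_{k+3}(C_{k+3})$ by $\omega(C_k)$. The one step you state loosely --- ``multiplying $R^k_{\rm vel}$ by the cost of $\Omega_{k+3}$'' --- is legitimate but requires the observation, made explicit in the paper, that although $R^k_{{\rm vel},i}$ \emph{does} depend on $C_{k+3},C_{k+4}$ (through $\Psi$ inside the Biot--Savart integral), this dependence factors out of the integral, giving the identity $\Omega_{k+3}R^k_{{\rm vel},i} = R^k_{{\rm vel},i}\,\Psi^{-1}\,\Omega_{k+3}\Psi$ and hence, since $|\Psi|=1$, the bound $|\Omega_{k+3}R^k_{{\rm vel},i}|\le |R^k_{{\rm vel},i}|\,|\Omega_{k+3}\Psi|$, which is exactly how the third line of the estimate is assembled.
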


\begin{proof}
    By Lemma \ref{lemma:vel_op}, we can write 
\begin{equation}
        ((\Omega_{k+3} \times U_k) \Psi )_\alpha
        =
        \eps_{\alpha i j}\Omega^i_{k+3} \left(u_j(C_k) \Psi + \frac{(R_{\rm BS})_j}{\log(1/\rho)} \Psi + (R_{\rm vel})_j\right)
    \end{equation}
Since  $\Omega_{k+3}$ is a differential operator acting only on variables $C_{k+3}$ and $C_{k+4}$, by Lemma~\ref{lemma:nabla x nabla 1} we obtain
\begin{equation}
    \Omega_{k+3}(u_j(C_k) \Psi)
    = \langle \omega \rangle_{k+3}(C_{k+3}) u_j(C_k) \Psi - R_{\rm ad}^{k+3} u_j(C_k) \Psi 
    - \frac{i\gamma}{\nu} u_j(C_k) \Psi \nabla_{C_{k+4}\Gamma}\times \nabla_{C_{k+3}}\Gamma
\end{equation}
We define
\begin{equation}
    \begin{split}
        (R^k_{\rm adv})_\alpha 
        &:= 
        \eps_{\alpha \ell j} (\langle \omega \rangle_{k+3}(C_{k+3}) - \omega(C_k))_\ell u_j(C_k)) \Psi
        \\& - \eps_{\alpha \ell j}( (R_{\rm ad}^{k+3})_\ell + \frac{i\gamma}{\nu}  (\nabla_{C_{k+4}\Gamma}\times \nabla_{C_{k+3}}\Gamma)_\ell) u_j(C_k) \Psi 
        \\&
        + \eps_{\alpha \ell j} \Omega_{k+3}^\ell \left( \frac{(R^k_{\rm BS})_j}{\log(1/\rho)} \Psi + (R_{\rm vel})_j\right).
    \end{split}
\end{equation}
First of all, a simple computations shows that 
\[
|\langle \omega \rangle_{k+2} (C_{k+3}) - \omega (C_{k+3})|
\leq \const \|D\omega\|_{L^\infty} (|C_{k+4}-C_{k+3}|+ |C_{k+3}-C_{k+2}|)\, ,
\]
while obviously 
\[
|\omega (C_{k+3})-\omega (C_k)| \leq \|D\omega\|_{L^\infty} \sum_{j=k}^{k+2} |C_{j+1}-C_j|\, .
\]
As for $R^{k+3}_{\rm ad}$ we recall Lemma \ref{lemma:nabla x nabla 1} to get 
\[
|R^{k+3}_{\rm ad}| \leq \const \|D\omega\|_{L^\infty} |C_{k+4}-C_{k+2}|
\leq \const \|D\omega\|_{L^\infty} (|C_{k+4}-C_{k+3}|+|C_{k+3}-C_{k+2}|) \, .
\]
Next, recalling Lemma \ref{l:nablaGamma_bound}, we have 
\[
|\nabla_{C_{k+4}\Gamma}\times \nabla_{C_{k+3}}\Gamma| 
\leq \const \|D\omega\|_{L^\infty}^2 |C_{k+4} - C_{k+2}||C_{k+1}-C_{k-1}|
\]
We now come to the last two terms. First of all recall that the error terms 
$R^k_{{\rm BS}}$ only depend on the variables $C_{k+1}, C_k, C_{k-1}$ while the differential operator $\Omega_{k+3}$ involves derivatives in $C_{k+3}$ and $C_{k+4}$. Therefore we can write 
\[
I := \Omega_{k+3}^\ell \left( \frac{(R^k_{\rm BS})_j}{\log(1/\rho)} \Psi\right)
= \frac{(R^k_{\rm BS})_j}{\log(1/\rho)} \Omega_{k+3}^\ell \Psi\, .
\]
Now, recalling the estimates in Lemma \ref{lemma:BSlog} and Remark \ref{r:bound-R_BS} we get
\[
\left|\frac{(R^k_{\rm BS})_j}{\log(1/\rho)}\right|  \leq \const \big((\ell^{-1} \rho + |C_{k+1}-C_k|+|C_k-C_{k-1}|) \|Du\|_{L^\infty} + \ell^{3/2} \|u\|_{H^2}\big)
\]
while of course 
\[
|\Omega^{k+3} \Psi| \leq \const \|\omega\|_{L^\infty} + |R^{k+3}_{\rm ad}|
\leq C \big(\|\omega\|_{L^\infty} + (|C_{k+4}-C_{k+3}|+|C_{k+3}-C_{k+2}) \|D\omega\|_{L^\infty}\big)
\]
In order to estimate the final term, we need to inspect the exact form of the error terms forming $R^k_{\rm vel}$. First of all we recall the formula
\[
R^k_{{\rm vel},1} := -\frac{1}{\log(1/\rho)}\, {\rm BS}_\ell\left[
        R^k_{\rm ad}\Psi 
        + \frac{i\gamma}{\nu}\, \Psi \nabla_{C_{k+1}}\Gamma \times \nabla_{C_k}\Gamma
    \right]
\]
We again observe that, due to the ``shift'' in the variables appearing in the differential operator $\Omega^{k+3}$ we can write the identity
\[
\Omega^{k+3} R^k_{{\rm vel}, 1}
= -\frac{1}{\log(1/\rho)}\, {\rm BS}_\ell\left[
        R^k_{\rm ad}\Psi 
        + \frac{i\gamma}{\nu}\, \Psi \nabla_{C_{k+1}}\Gamma \times \nabla_{C_k}\Gamma
    \right] \Psi^{-1} (\Omega^{k+3} \Psi))\, .
\]
Owing to the fact that $|\Psi|=1$, we can then estimate 
\begin{align*}
|\Omega^{k+3} R^k_{{\rm vel}, 1}| &\leq |\Omega^{k+3} \Psi| |R^k_{{\rm vel}, 1}|\\
&\leq \const (1+L_{k-1,k+3}) \ell^{-1} L_{k-1,k+3} \|D\omega\|_{L^\infty} (1+ \ell^{-1} \frac{\gamma}{\nu} L_{k-1,k+3}\|D\omega\|_{L^\infty})
\|\omega\|_{C^1}
\end{align*}
We next observe that the same structure holds for the error term $R^k_{{\rm vel}, 2}$, namely that 
\[
\Omega^{k+3} R^k_{{\rm vel}, 2} = R^k_{{\rm vel}, 2} \Psi^{-1} \Omega^{k+3} \Psi\, ,
\]
leading therefore to the estimate
\begin{align*}
|\Omega^{k+3} R^k_{{\rm vel}, 2}| &\leq \const \frac{\gamma}{\nu} \ell^{-1} \log (1/\rho) L_{k-1,k+3} \|D\omega\|_{L^\infty}\, .
\end{align*}
\end{proof}

\section{Migdal's Original Discretization}\label{s:Sasha}

As explained in the introduction, the discrete vorticity and velocity operators introduced in Section~\ref{sec:discretized1} are a variant of those originally proposed by Migdal in \cite{Migdal23,Migdal24,Migdal25,Migdalnuovo}. 
In this section we recall Migdal’s original operators and highlight the main technical differences with respect to our discretization.

\subsection{New Variables} 
By translation invariance, we assume $C_0=0$. With this choice, we can regard $C_1, \ldots, C_{N-1}$ as independent variables in $(\mathbb{R}^3)^{N-1}$, and introduce the increments
\[
s_k := \triangle C_k, \qquad k = 0,\ldots,N-2,
\]
so that we may pass from the variables $C_1, \ldots, C_{N-1}$ to $s_0, \ldots, s_{N-2}$.
Recall that $C_N = C_0 = 0$.
The change of variables is given explicitly by
\[
C_1 = s_0, 
\qquad 
C_k = s_0 + \cdots + s_{k-1}, \quad k = 1,\ldots, N-1,
\]
which immediately yields the identity
\begin{equation}\label{eq:nablas_k}
    \nabla_{s_k} 
    = \nabla_{C_{k+1}} + \cdots + \nabla_{C_{N-1}},
    \qquad k=0,\ldots, N-2.
\end{equation}

\subsection{Migdal's Discretized Area Derivative}

In his works, Migdal proposes the following discretization of the area derivative:
\begin{equation}\label{eq:AreDev Migdal}
    \frac{1}{2}\bigl(\nabla_{s_{k}} + \nabla_{s_{k-1}}\bigr) \times \nabla_{C_k}\Gamma .
\end{equation}
Notice that, by \eqref{eq:nablas_k}, this can be expressed in terms of our discretized area derivative as
\begin{equation}
    \frac{1}{2}\bigl(\nabla_{s_{k}} + \nabla_{s_{k-1}}\bigr) \times \nabla_{C_k} 
    =
    \nabla_{C_{k+1}} \times \nabla_{C_k}
    +
    \left(\sum_{\ell=k+2}^{N-1}\nabla_{C_\ell}\right) \times \nabla_{C_k}.
\end{equation}
From the explicit expression for $\nabla_{C_k}$ obtained in \eqref{e:formulozza}, 
it follows that Migdal’s discretized area derivative satisfies the same property as in Lemma~\ref{lemma:nabla x nabla 1}.

\begin{lemma}\label{lemma:nabla x nabla 2}
	With the above notation, for $2\leq k \leq N-2$ and any integer $K\geq 0$, if $u\in C^{K+2}$ we then have:
	\begin{equation}
		\frac{1}{2}(\nabla_{s_{k}} + \nabla_{s_{k-1}}) \times \nabla_{C_k} \Gamma 
		= - \int_0^1 \omega (aC_k + (1-a) {\textstyle{\frac{C_{k-1} + C_{k+1}}{2}}})\, da + R_{\rm ad}^k\, ,
	\end{equation}
    where $R_{\rm ad}$ (the area derivative error) satisfies the estimate 
    \begin{equation}
    |D^K_{C_k} R_{\rm ad}^k| (C_1, \ldots , C_{N-1}) \leq C(K) \|D^{K+1} \omega\|_{L^\infty} |C_{k+1} - C_{k-1}|\, .
    \end{equation}
\end{lemma}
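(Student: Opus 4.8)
The plan is to show that, when applied to $\Gamma$, Migdal's discretized area derivative $\frac{1}{2}(\nabla_{s_k}+\nabla_{s_{k-1}})\times\nabla_{C_k}$ coincides \emph{exactly} with the operator $\nabla_{C_{k+1}}\times\nabla_{C_k}$ already analyzed in Lemma~\ref{lemma:nabla x nabla 1}. Once this identity is in hand, both the main term $-\int_0^1\omega(aC_k+(1-a)\tfrac{C_{k-1}+C_{k+1}}{2})\,da$ and the error estimate \eqref{est:R_ad} are inherited verbatim from that lemma, with no new computation. The displayed decomposition just before the statement already rewrites Migdal's operator as $\nabla_{C_{k+1}}\times\nabla_{C_k}+\bigl(\sum_{\ell=k+2}^{N-1}\nabla_{C_\ell}\bigr)\times\nabla_{C_k}$ (here the omitted term $\tfrac{1}{2}\nabla_{C_k}\times\nabla_{C_k}\Gamma$ vanishes, being the curl in $C_k$ of the gradient field $\nabla_{C_k}\Gamma$), so it suffices to prove that the entire tail sum acts as zero on $\Gamma$.

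The decisive observation is a locality property of $\nabla_{C_k}\Gamma$: the explicit formula \eqref{e:formulozza} exhibits $\nabla_{C_k}\Gamma$ as a quantity depending only on the three consecutive vertices $C_{k-1},C_k,C_{k+1}$, both through the parametrization $f(a,b)$ and through the factors $C_{k+1}-C_{k-1}$ and $\triangle C_k\times\triangle C_{k-1}$. Consequently $\nabla_{C_\ell}\bigl(\nabla_{C_k}\Gamma\bigr)=0$ for every $\ell\notin\{k-1,k,k+1\}$, and in particular for every index $\ell\geq k+2$ appearing in the tail sum. Hence $\bigl(\sum_{\ell=k+2}^{N-1}\nabla_{C_\ell}\bigr)\times\nabla_{C_k}\Gamma\equiv 0$, and Migdal's operator reduces on the nose to $\nabla_{C_{k+1}}\times\nabla_{C_k}\Gamma$.

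Finally I would simply invoke Lemma~\ref{lemma:nabla x nabla 1} to conclude. There is no genuine obstacle: the statement is a corollary of Lemma~\ref{lemma:nabla x nabla 1} together with the locality of \eqref{e:formulozza}. The only point demanding a little care is the index bookkeeping around the change of variables \eqref{eq:nablas_k} and the cyclic conventions; this is precisely where the hypothesis $2\le k\le N-2$ is used, since it guarantees that $C_{k-1},C_k,C_{k+1}$ are all genuine interior variables of the reduced system (with $C_0=C_N=0$ fixed by translation invariance), so that the telescoping of the $\nabla_{s}$ operators and the vanishing of the tail sum are applied within the legitimate range of indices and no term wraps around the cyclic identification.
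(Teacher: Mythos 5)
Your proof is correct and follows exactly the paper's own route: the paper likewise rewrites Migdal's operator via \eqref{eq:nablas_k} as $\nabla_{C_{k+1}}\times\nabla_{C_k}+\bigl(\sum_{\ell=k+2}^{N-1}\nabla_{C_\ell}\bigr)\times\nabla_{C_k}$ and then invokes the locality of \eqref{e:formulozza} to kill the tail sum and reduce to Lemma~\ref{lemma:nabla x nabla 1}. In fact you spell out the two points the paper leaves implicit (the vanishing of $\tfrac{1}{2}\nabla_{C_k}\times\nabla_{C_k}\Gamma$ as a curl of a gradient, and the absence of cyclic wrap-around for $2\le k\le N-2$), so your write-up is a faithful, slightly more detailed version of the intended argument.
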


\subsection{Migdal's Vorticity Operator}

For $2 \le k \le N-2$, Migdal's vorticity operator is defined by
\begin{equation}\label{eq:vor_op Migdal}
    \Omega_k^M := \frac{i\nu}{\gamma}\,\frac{1}{2}\bigl(\nabla_{s_k} + \nabla_{s_{k-1}}\bigr) \times \nabla_{C_k}.
\end{equation}
Using the properties of Migdal's area derivative \eqref{eq:AreDev Migdal}, 
and arguing as in Lemma~\ref{l:exponential-computation-1}, one obtains the following result:

\begin{lemma}\label{l:exponential-computation-2}
Assume $u \in C^2$ and $2 \le k \le N-2$. Then
\begin{equation}
    \Omega_k^M \Psi[u, C_1, \ldots, C_{N-1}]
    = \langle \omega \rangle_k(C_k)\, \Psi
      - R_{\rm ad}^k\, \Psi
      - \frac{i\gamma}{\nu}\, \Psi \Big( \sum_{j=k+1}^{N-1} \nabla_{C_j}\Gamma \Big) \times \nabla_{C_k}\Gamma ,
\end{equation}
where $\langle \omega \rangle_k$ is given by \eqref{e:media-segmento}, and $R_{\rm ad}$ is as in Lemma~\ref{lemma:nabla x nabla 2}.
\end{lemma}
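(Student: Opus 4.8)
The plan is to mirror exactly the computation carried out for Lemma~\ref{l:exponential-computation-1}, replacing the operator $\nabla_{C_{k+1}}\times\nabla_{C_k}$ by Migdal's operator $\tfrac12(\nabla_{s_k}+\nabla_{s_{k-1}})\times\nabla_{C_k}$ and invoking Lemma~\ref{lemma:nabla x nabla 2} in place of Lemma~\ref{lemma:nabla x nabla 1}. First I would unwind the definition $\Omega^M_k=\tfrac{i\nu}{\gamma}\cdot\tfrac12(\nabla_{s_k}+\nabla_{s_{k-1}})\times\nabla_{C_k}$ applied to $\Psi=\exp\{\tfrac{i\gamma}{\nu}\Gamma\}$. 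Since the outer derivative $\tfrac12(\nabla_{s_k}+\nabla_{s_{k-1}})$ is a first-order differential operator acting on the $C_\ell$-variables through \eqref{eq:nablas_k}, the chain rule gives $\nabla_{C_k}\Psi=\tfrac{i\gamma}{\nu}\Psi\,\nabla_{C_k}\Gamma$, and then applying $\tfrac12(\nabla_{s_k}+\nabla_{s_{k-1}})\times$ to the product $\Psi\,\nabla_{C_k}\Gamma$ produces two terms by the Leibniz rule: one where the derivative hits $\nabla_{C_k}\Gamma$, and one where it hits $\Psi$ (again via the chain rule producing another factor $\tfrac{i\gamma}{\nu}$).

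The key step is to identify the two resulting pieces with the stated right-hand side. The term where the outer operator differentiates $\nabla_{C_k}\Gamma$ gives precisely $\tfrac{i\nu}{\gamma}\cdot\Psi\cdot\tfrac12(\nabla_{s_k}+\nabla_{s_{k-1}})\times\nabla_{C_k}\Gamma$, which by Lemma~\ref{lemma:nabla x nabla 2} equals $\Psi\big(\langle\omega\rangle_k(C_k)-R^k_{\rm ad}\big)$. The term where the outer operator differentiates $\Psi$ produces, after collecting constants, $-\tfrac{i\gamma}{\nu}\Psi\big(\tfrac12(\nabla_{s_k}+\nabla_{s_{k-1}})\Gamma\big)\times\nabla_{C_k}\Gamma$; here I would use the translation of $\nabla_{s_k}+\nabla_{s_{k-1}}$ into $C$-derivatives. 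By \eqref{eq:nablas_k} we have $\nabla_{s_k}=\sum_{\ell=k+1}^{N-1}\nabla_{C_\ell}$ and $\nabla_{s_{k-1}}=\sum_{\ell=k}^{N-1}\nabla_{C_\ell}$, so their average equals $\sum_{\ell=k+1}^{N-1}\nabla_{C_\ell}+\tfrac12\nabla_{C_k}$. Since the self-term $\tfrac12\nabla_{C_k}\Gamma\times\nabla_{C_k}\Gamma$ vanishes by antisymmetry of the cross product, the quadratic term reduces exactly to $-\tfrac{i\gamma}{\nu}\Psi\big(\sum_{j=k+1}^{N-1}\nabla_{C_j}\Gamma\big)\times\nabla_{C_k}\Gamma$, matching the claimed formula.

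I expect the only genuine subtlety to be the bookkeeping of the index shift and the cancellation of the $\tfrac12\nabla_{C_k}\Gamma\times\nabla_{C_k}\Gamma$ self-term; everything else is a verbatim repetition of the chain-rule computation \eqref{e:chain}. The hypotheses $u\in C^2$ and $2\le k\le N-2$ are exactly what is needed so that the relevant vertex indices stay in range for \eqref{eq:nablas_k} and so that Lemma~\ref{lemma:nabla x nabla 2} applies with $K=0$, guaranteeing the error estimate on $R^k_{\rm ad}$. No new estimate is required, since the lemma statement carries $R^k_{\rm ad}$ forward symbolically rather than bounding the full error, so the proof is purely algebraic once Lemma~\ref{lemma:nabla x nabla 2} is in hand.
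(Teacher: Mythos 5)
Your proposal is correct and is essentially the paper's own argument: the paper gives no separate proof of this lemma, stating only that one argues as in Lemma~\ref{l:exponential-computation-1} (chain rule, then Leibniz rule, then the area-derivative Lemma~\ref{lemma:nabla x nabla 2}), which is precisely your plan, including the decomposition $\tfrac12(\nabla_{s_k}+\nabla_{s_{k-1}})=\tfrac12\nabla_{C_k}+\sum_{j=k+1}^{N-1}\nabla_{C_j}$ and the cancellation of the self-term $\tfrac12\nabla_{C_k}\Gamma\times\nabla_{C_k}\Gamma$ by antisymmetry. The only blemish is a constant slip in your intermediate expression: the Leibniz term where the derivative hits $\nabla_{C_k}\Gamma$ is $-\Psi\cdot\tfrac12(\nabla_{s_k}+\nabla_{s_{k-1}})\times\nabla_{C_k}\Gamma$ rather than $\tfrac{i\nu}{\gamma}\Psi\cdot\tfrac12(\nabla_{s_k}+\nabla_{s_{k-1}})\times\nabla_{C_k}\Gamma$ (the prefactor $\tfrac{i\nu}{\gamma}$ combines with the chain-rule factor $\tfrac{i\gamma}{\nu}$ to give $-1$), but the conclusion you then draw, $\Psi\bigl(\langle\omega\rangle_k(C_k)-R^k_{\rm ad}\bigr)$, is the correct one.
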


\subsection{Migdal's Velocity Operator}
For $2 \le k \le N-2$, and parameters $0 < \ell, \rho < 1$, the velocity operator associated with Migdal's vorticity operator is defined by
\begin{equation}\label{eq:vel_op Migdal}
    U_k^M := \frac{1}{\log(1/\rho)}\, {\rm BS}_\ell \circ \Omega_k^M,
\end{equation}
where the regularized Biot-Savart operator ${\rm BS}_\ell$ (see \eqref{eq:BSN1}) acts on the variable $C_k$.  

We obtain the following expansion, whose proof follows verbatim from that of Lemma~\ref{lemma:vel_op}, 
the only minor difference being that the first error term involves a sum of derivatives of the circulation:
\begin{equation}
    R_{{\rm vel},1}^k := -\frac{1}{\log(1/\rho)}\, {\rm BS}_\ell\left[
        R_{\rm ad}^k\Psi 
        + \frac{i\gamma}{\nu}\, \Psi \Big( \sum_{j=k+1}^{N-1} \nabla_{C_j}\Gamma \Big) \times \nabla_{C_k}\Gamma
    \right].
\end{equation}
Compared with the error term in Lemma~\ref{lemma:vel_op}, here additional contributions appear:
\begin{equation}
    {\rm BS}_\ell\left[
        \frac{i\gamma}{\nu}\, \Psi \Big( \sum_{j=k+2}^{N-1} \nabla_{C_j}\Gamma \Big) \times \nabla_{C_k}\Gamma
    \right],
\end{equation}
which, however, do not pose a serious difficulty thanks to the smallness of $\nabla_{C_k}\Gamma$.

\begin{lemma}\label{lemma:vel_op Migdal}
    For any $2\le k \le N-2$, $0< \ell, \rho<1$ and $u\in C^2$, we have
    \begin{equation}
        U_k^M \Psi[C_1, \ldots, C_{N-1}]
        =
        u(C_k) \Psi + \frac{R_{\rm BS}^k}{\log(1/\rho)} \Psi + R_{\rm vel}^k
    \end{equation}
    where $R_{\rm BS}^k$ is defined in Lemma \ref{lemma:BSlog} and the velocity error $R_{\rm vel}^k:= R_{{\rm vel},1}^k + R_{{\rm vel},2}^k$ satisfies the estimates:
      \begin{align}
        & |R^k_{{\rm vel},1}| \le \const \ell^{-1} |C_{k+1}-C_{k-1}| \|D\omega\|_{L^\infty} 
        \\& \quad + \const \frac{\gamma}{\nu} \ell^{-2} |C_{k+1}- C_{k-1}|(1 + |\triangle C_k|) (\sum_{j=k+2}^{N-2} |C_{j-1} - C_{j+1}|(1 + |\triangle C_{j-1}|) \|D\omega\|_{L^\infty}^2\, .
        \\
        & |R^k_{{\rm vel},2}| \le \const \frac{\gamma}{\nu} \ell^{-1} \log (1/\rho) |C_{k+1}-C_{k-1}| \|D \omega\|_{L^\infty}\, .
    \end{align}
\end{lemma}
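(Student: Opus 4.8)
The plan is to mirror the proof of Lemma~\ref{lemma:vel_op} essentially line by line, the only structural change being that the nonlinear contribution produced by Migdal's vorticity operator now carries the full sum $\sum_{j=k+1}^{N-1}\nabla_{C_j}\Gamma$ supplied by Lemma~\ref{l:exponential-computation-2}, in place of the single term $\nabla_{C_{k+1}}\Gamma$ appearing in Lemma~\ref{l:exponential-computation-1}. Concretely, I would apply $\tfrac{1}{\log(1/\rho)}\,{\rm BS}_\ell$ (acting on the variable $C_k$) to the identity of Lemma~\ref{l:exponential-computation-2}, set
\[
R^k_{{\rm vel},1}:=-\frac{1}{\log(1/\rho)}\,{\rm BS}_\ell\Big[R^k_{\rm ad}\Psi+\frac{i\gamma}{\nu}\,\Psi\Big(\sum_{j=k+1}^{N-1}\nabla_{C_j}\Gamma\Big)\times\nabla_{C_k}\Gamma\Big],
\]
and thereby reduce matters to the main term $\tfrac{1}{\log(1/\rho)}\,{\rm BS}_\ell[\langle\omega\rangle_k\Psi]$.

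The main term and the error $R^k_{{\rm vel},2}$ require no new argument. The main term is split through the discrete difference $\triangle\Psi$ in the $k$-th variable exactly as before, and Lemma~\ref{lemma:BSlog} extracts $u(C_k)\Psi+\tfrac{R^k_{\rm BS}}{\log(1/\rho)}\Psi$ together with the leftover $R^k_{{\rm vel},2}:=\tfrac{1}{\log(1/\rho)}\,{\rm BS}_\ell[\langle\omega\rangle_k\,\triangle\Psi]$. Since neither $\langle\omega\rangle_k$ nor $\triangle\Psi$ involves the sum over $j$, the estimate on $R^k_{{\rm vel},2}$ is verbatim the one obtained in Lemma~\ref{lemma:vel_op}, which is why the stated bound for $R^k_{{\rm vel},2}$ coincides with that lemma's.

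It remains to estimate $R^k_{{\rm vel},1}$. The $R^k_{\rm ad}$ piece is unchanged: Lemma~\ref{lemma:nabla x nabla 2} bounds it by $\const\|D\omega\|_{L^\infty}|C_{k+1}-C_{k-1}|$, and Proposition~\ref{prop:polygrowth} then contributes the factor $\ell^{-1}$, producing the first summand of the claimed estimate. For the nonlinear piece I would separate the index $j=k+1$, which reproduces the $\ell^{-2}$ term of Lemma~\ref{lemma:vel_op}, from the tail $j\ge k+2$. The crucial observation is that for $j\ge k+2$ the factor $\nabla_{C_j}\Gamma$ depends only on $C_{j-1},C_j,C_{j+1}$ (with the convention $C_N=C_0$ fixed) and is therefore \emph{constant} in $C_k$; hence ${\rm BS}_\ell$ effectively acts only on $\Psi\,\nabla_{C_k}\Gamma$, whose growth in $C_k$ is linear by Lemma~\ref{l:nablaGamma_bound}, so Proposition~\ref{prop:polygrowth} again yields $\ell^{-2}$. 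Pulling the constant vector $\sum_{j\ge k+2}\nabla_{C_j}\Gamma$ outside and bounding both it and the remaining factor $\nabla_{C_k}\Gamma$ by Lemma~\ref{l:nablaGamma_bound} gives precisely the sum $\sum_{j=k+2}^{N-2}|C_{j-1}-C_{j+1}|(1+|\triangle C_{j-1}|)$ and the quadratic dependence $\|D\omega\|_{L^\infty}^2$ appearing in the statement.

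The genuinely new point, and the place I expect the only real bookkeeping difficulty, is that this tail sum has $O(N)$ terms, so one must check it does not blow up. This is exactly the \emph{smallness of $\nabla_{C_k}\Gamma$} flagged after the statement: each $|\nabla_{C_j}\Gamma|$ is of the order of a width $|C_{j-1}-C_{j+1}|$ times edge-length corrections, so the tail is comparable to a fixed multiple of the loop length plus lower-order contributions and stays bounded for regular polygonal approximations. The remaining care is purely organizational: correctly assigning which factors are constant versus variable in $C_k$ before invoking Proposition~\ref{prop:polygrowth}, and tracking the wrap-around index so that $C_N=C_0$ is treated as a fixed point rather than a free variable.
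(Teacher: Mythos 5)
Your proposal is correct and is essentially the paper's own proof: the paper likewise repeats the proof of Lemma~\ref{lemma:vel_op} verbatim, with the sum $\sum_{j=k+1}^{N-1}\nabla_{C_j}\Gamma$ from Lemma~\ref{l:exponential-computation-2} replacing $\nabla_{C_{k+1}}\Gamma$, and disposes of the extra terms $j\ge k+2$ exactly as you do, using that $\nabla_{C_j}\Gamma$ is then independent of $C_k$ while $\nabla_{C_k}\Gamma$ is small and grows only linearly in $C_k$, so that Proposition~\ref{prop:polygrowth} yields the factor $\ell^{-2}$ and Lemma~\ref{l:nablaGamma_bound} produces the stated sum. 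One cosmetic caveat: you cannot literally pull the constant vector $\sum_{j\ge k+2}\nabla_{C_j}\Gamma$ outside ${\rm BS}_\ell$ (the operator itself involves a cross product with the kernel), but this does not matter, since Proposition~\ref{prop:polygrowth} only requires a pointwise bound on the modulus of the input, which is what your argument actually uses.
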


\subsection{Migdal's Diffusion Operator}
For $2 \le k \le N-2$, starting from Migdal's vorticity operator we define the diffusion operator by
\[
\mathcal{D}_k^M := 2 \nabla_{C_k} \times (\Omega_k^M ).
\]
Arguing as in Lemma~\ref{l:diffusion}, one obtains the same estimates.

\begin{lemma}\label{l:diffusion migdal}
Assume $u\in C^3$ and $0\leq k \leq N-1$. Then
\begin{equation}\label{e:diffusion migdal}
    \mathcal{D}_k^M \Psi[u, C_0, \ldots, C_{N-1}] = (\nabla \times \omega) (C_k) \Psi  + R^k_{\rm diff}\, ,
\end{equation}
where $R^k_{\rm diff}$ satisfies the estimate
\begin{equation}\label{e:diffusion-2.1}
    \begin{split}
      |R^k_{\rm diff}| &\le \const \|\omega \|_{C^2}( 1 + \|\omega\|_{C^2}(1 + L_{k-1,k+1})^2 )(|\triangle C_k| + |\triangle C_{k-1}|)  
      \\& \quad 
      + \const \frac{\gamma}{\nu} \|\omega\|_{C^2}^2|\triangle C_k|( 1 + |\triangle C_{k-1}| + \|\omega \|_{C^2})(|\triangle C_k| + |\triangle C_{k-1}|) 
    \end{split}
\end{equation}
where $L_{p,q}$ is defined in \eqref{eq:L}. 
\end{lemma}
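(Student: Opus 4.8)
The plan is to run the proof of Lemma~\ref{l:diffusion} essentially verbatim, feeding in the Migdal expansion of Lemma~\ref{l:exponential-computation-2} in place of Lemma~\ref{l:exponential-computation-1}. Concretely, I would start from
\[
\Omega_k^M\Psi = \langle\omega\rangle_k(C_k)\,\Psi - R_{\rm ad}^k\,\Psi - \frac{i\gamma}{\nu}\,\Psi\Big(\sum_{j=k+1}^{N-1}\nabla_{C_j}\Gamma\Big)\times\nabla_{C_k}\Gamma,
\]
apply $\mathcal{D}_k^M=2\nabla_{C_k}\times(\,\cdot\,)$, and split the outcome into the same three groups as in \eqref{eq:split4}. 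The first group $2\nabla_{C_k}\times[\langle\omega\rangle_k(C_k)\Psi]$ produces the main term $(\nabla\times\omega)(C_k)\Psi$ together with the discrepancy $\big(2\nabla_{C_k}\times\langle\omega\rangle_k(C_k)-(\nabla\times\omega)(C_k)\big)\Psi$, which is bounded by $\|D^2\omega\|_{L^\infty}(|\triangle C_k|+|\triangle C_{k-1}|)$ exactly as before; the second group $-2\nabla_{C_k}\times(R_{\rm ad}^k\Psi)$ is estimated word for word using \eqref{eq:nablaGamma_bound} and Lemma~\ref{lemma:nabla x nabla 2}. These two contributions reproduce, unchanged, the corresponding pieces of the bound \eqref{e:diffusion-2.1}.

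The only structurally new feature is the third group, which involves the long sum $V:=\sum_{j=k+1}^{N-1}\nabla_{C_j}\Gamma$ instead of the single term $\nabla_{C_{k+1}}\Gamma$ appearing in Lemma~\ref{l:diffusion}. Here the decisive remark is a disjointness property of the circulation: since $\nabla_{C_j}\Gamma$ depends only on $C_{j-1},C_j,C_{j+1}$, the mixed Hessian $\nabla_{C_k}\nabla_{C_j}\Gamma$ vanishes for every $j\ge k+2$. Hence, when the outer curl $2\nabla_{C_k}\times$ differentiates the $V$-factor, only the summand $j=k+1$ survives, and that surviving piece is precisely the term already handled in Lemma~\ref{l:diffusion}. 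I would therefore isolate this ``differentiated'' part first and dispose of it verbatim.

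What remains are the terms in which $V$ is \emph{not} differentiated, i.e. those where $2\nabla_{C_k}\times$ falls either on $\Psi$ (producing the factor $\tfrac{i\gamma}{\nu}\nabla_{C_k}\Gamma$) or on $W:=\nabla_{C_k}\Gamma$. Expanding the curl of the triple product, these collect, up to the factor $\Psi$ and harmless constants, into
\[
V\,(\nabla_{C_k}\cdot W)-(V\cdot\nabla_{C_k})W+\frac{i\gamma}{\nu}\big(|W|^2V-(V\cdot W)W\big).
\]
The last bracket equals $W\times(V\times W)$, of size $|W|^2|V|$, hence of order $(|\triangle C_k|+|\triangle C_{k-1}|)^2\|\omega\|_{C^1}^2|V|$; I would bound it by \eqref{eq:nablaGamma_bound} and absorb it into $R_{\rm diff}^k$. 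The first two terms carry only one factor of $\nabla_{C_k}\Gamma$ or of $\nabla_{C_k}\nabla_{C_k}\Gamma$, each of order $(|\triangle C_k|+|\triangle C_{k-1}|)\|\omega\|_{C^2}$ by \eqref{eq:nablaGamma_bound}.

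The step I expect to be the main obstacle is precisely the control of these undifferentiated occurrences of $V$. Unlike the single increment-sized quantity $\nabla_{C_{k+1}}\Gamma$, the full sum is not small: a direct estimate only gives $|V|\lesssim L_{0,N-1}\|\omega\|_{C^1}$, so the two terms $V\,(\nabla_{C_k}\cdot W)=V\,\Delta_{C_k}\Gamma$ and $(V\cdot\nabla_{C_k})W$ a priori contribute a factor of the \emph{global} length $L_{0,N-1}$, which is absent from the purely local bound \eqref{e:diffusion-2.1}. To recover the claimed local estimate one must either exhibit a cancellation between these two terms --- equivalently, show that the curl $\nabla_{C_k}\times(V\times\nabla_{C_k}\Gamma)$ gains an extra power of the edge lengths --- or else accept an explicit sum term in the error, as was done for the velocity operator in Lemma~\ref{lemma:vel_op Migdal}. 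This is the same delicate phenomenon that distinguishes the well-behaved vorticity and diffusion operators from the advection operator of Section~\ref{s:advection}, and settling it is the crux of the argument.
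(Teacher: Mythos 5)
Your decomposition is exactly the one the paper has in mind---its entire ``proof'' of Lemma \ref{l:diffusion migdal} is the sentence ``arguing as in Lemma \ref{l:diffusion}, one obtains the same estimates''---and the obstacle you flag at the end is not a defect of your argument: it is a genuine defect of the statement. Your dichotomy can be settled, in the negative direction: there is no cancellation. Write $W:=\nabla_{C_k}\Gamma$ and $\tilde V:=\sum_{j=k+2}^{N-1}\nabla_{C_j}\Gamma$. Since the first two groups in your splitting are literally identical to those of Lemma \ref{l:diffusion}, the whole discrepancy between the two diffusion operators is
\[
\mathcal{D}^M_k\Psi-\mathcal{D}_k\Psi
=-\frac{2i\gamma}{\nu}\,\nabla_{C_k}\times\bigl[\Psi\,\tilde V\times W\bigr]
=-\frac{2i\gamma}{\nu}\,\Psi\Bigl[\tfrac{i\gamma}{\nu}\,W\times(\tilde V\times W)
+\tilde V\,(\nabla_{C_k}\cdot W)-(\tilde V\cdot\nabla_{C_k})W\Bigr],
\]
where the second equality uses your disjointness remark ($\tilde V$ is independent of $C_k$). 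Every term on the right keeps the factor $\tilde V$ undifferentiated. In components the last two terms read $\bigl[(\Delta_{C_k}\Gamma)\,\delta_{ij}-(\nabla_{C_k}\nabla_{C_k}\Gamma)_{ij}\bigr]\tilde V_j$; this matrix is generically nonzero and not of any special form annihilating $\tilde V$, and the cubic bracket carries a different power of $\gamma/\nu$, so no algebraic identity can produce the extra factor of local edge length that the bound \eqref{e:diffusion-2.1} would require. Hence $R^k_{\rm diff}$ for Migdal's operator contains a contribution of size (local edge length)$\,\times\,|\tilde V|$, and $|\tilde V|$ scales like the length of the far portion of the loop: keeping $C_{k-1},C_k,C_{k+1}$ fixed and letting the rest of the loop grow long inside $\{\omega\neq 0\}$ makes $|R^k_{\rm diff}|$ grow while the right-hand side of \eqref{e:diffusion-2.1} stays bounded. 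So the purely local estimate \eqref{e:diffusion-2.1} cannot hold as stated; also note that your own treatment is slightly inconsistent on this point, since the cubic term $W\times(\tilde V\times W)$, which you propose to ``absorb'', carries the same factor $\tilde V$ and suffers from the same problem.

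The correct resolution is your second alternative, and it is the one the paper itself adopts for Migdal's \emph{velocity} operator: exactly as in Lemma \ref{lemma:vel_op Migdal}, the error bound for $\mathcal{D}^M_k$ must contain a global term, e.g.\ of the form
\[
\const\,\frac{\gamma}{\nu}\,\|\omega\|_{C^2}^2\,(|\triangle C_k|+|\triangle C_{k-1}|)\,(1+|\triangle C_{k-1}|)
\Bigl(1+\tfrac{\gamma}{\nu}\|\omega\|_{C^1}(|\triangle C_k|+|\triangle C_{k-1}|)\Bigr)
\sum_{j= k+2}^{N-1}|C_{j+1}-C_{j-1}|\,(1+|\triangle C_{j-1}|)\, .
\]
With this corrected statement everything downstream survives: in Theorem \ref{t:loop-equation-discretized-3} the loops satisfy \eqref{eq:cond M/N}, so the global sum is bounded by $\const\, M$, the extra error is still $O(N^{-1})$ with a constant depending on $M$, and \eqref{e:loop-error-3} is unaffected. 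In short, your proof strategy is the intended one and your diagnosis of where it breaks is correct; what is missing is only the recognition that the break is fatal for the lemma as literally stated, and that the fix is to enlarge the error bound rather than to search for a hidden cancellation.
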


\subsection{Migdal's Advection Operator} 
\label{s:advection}

Using Migdal's discretization, we are not able to establish the analogue of Theorem~\ref{t:loop-equation-discretized}, 
since the advection operator $\Omega_{k+3}^M \times U_k^M$ generates error terms that we are currently unable to control.  
Equivalently, we cannot prove a counterpart of Lemma~\ref{l:advect-operator}.  
The strongest statement we can show at present is a weaker version, in which the additional error terms are only bounded by order $1/\log(N)$: 
\begin{equation}
    (\Omega_{k+3}^M \times U_k^M)\,\Psi[C_0, \ldots, C_{N-1}]
    = (\omega \times u)(C_k)\,\Psi + R^k_{\rm adv} + R_{\rm bad},
\end{equation}
where the remainder $R_{\rm bad}$ satisfies
\[
    |R_{\rm bad}| \le \frac{C}{\log(N)}.
\]

Moreover, in Migdal’s discretization there is no clear benefit in shifting the indices of the advection operator: 
since $\Omega_{k+3}^M$ acts on all variables $C_1,\ldots,C_{N-1}$, most of the cancellations used in the proof of Lemma~\ref{l:advect-operator} are lost.  
In his work, Migdal instead employs the operator $\Omega_k^M \times U_k^M$, and claims that the Navier--Stokes equations implies that
\begin{equation}
\left(\partial_t  
- \frac{i\gamma}{\nu}\sum_{k=0}^{N-1} \triangle C_k \cdot 
   \bigl(\Omega_k^M \times U_k^M - \nu \mathcal{D}_k^M\bigr)\right) 
\Psi = R',
\end{equation}
for some small error term $R'$ (compare with the formulation in Theorem~\ref{t:loop-equation-discretized}).  
At present, however, we are not able to verify this claim.

\section{The discretized loop equation}

In this section we finally derive a discretized version of Migdal's loop equation. We split the section in three parts. In the first one we give a precise statement about the validity of the discretized version in ``physical space''. In the second part we see what happens when we apply the Fourier transform and we pass to the discretized momentum. In the final section we compare our conclusions to Migdal's and more specifically to the equations leading to his ``Euler ensemble''.

\medskip
For simplicity, throughout this section we restrict to loops $C \in \mathcal{L}_N \R^3$ satisfying
\begin{equation}\label{eq:cond M/N}
    \max_k |C_{k+1}-C_k| \le \frac{M}{N},
\end{equation}
for some parameter $M \ge 1$.

\subsection{Discretized Equation in Physical Variables} 
The main conclusion can be summarized in the following statement. 

\begin{theorem}\label{t:loop-equation-discretized}
Fix an integer $N \ge 2$ and parameters $M \ge 1$, $\alpha \in (0,1)$. 
Set $\ell = N^\alpha$ and $\rho = N$ in the definition of the velocity operator \eqref{eq:vel_op}. 
Assume that 
\[
u \in C^1(\R^3 \times [0,T]) \cap C\big([0,T], H^2 \cap C^3\big)
\]
is a classical solution of the incompressible Navier-Stokes equations \eqref{NS}. 
Let $C \in \mathcal{L}_N \R^3$ be a polygonal loop satisfying \eqref{eq:cond M/N}, and define
\begin{equation}
\Psi[u,C,t] 
= \exp\!\left\{\frac{i\gamma}{\nu} \int_0^1 u(C(\theta),t) \cdot C'(\theta)\, d\theta\right\}.
\end{equation}
Then
\begin{equation}\label{eq:Psi1}
\left(\partial_t  
- \frac{i\gamma}{\nu}\sum_{k=0}^{N-1} \triangle C_k \cdot \big(\Omega_{k+3} \times U_k - \nu \mathcal{D}_k\big)\right) 
\Psi
= R_{\mathrm{loop}},
\end{equation}
where the remainder satisfies
\begin{equation}\label{e:loop-error}
|R_{\mathrm{loop}}| 
\le \const(M,\nu,\gamma, \alpha, \|u\|_{C^3}, \|u\|_{H^2})\,\Big(N^{\alpha-1}\log N + N^{-3\alpha/2}\Big).
\end{equation}
\end{theorem}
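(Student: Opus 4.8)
The plan is to differentiate $\Psi$ in time, rewrite $\partial_t u$ through the vorticity formulation of Navier--Stokes, and then recognize the resulting spatial quantities as the leading terms of the discretized operators $\mathcal{D}_k$ and $\Omega_{k+3}\times U_k$, collecting everything that is left over into $R_{\mathrm{loop}}$. Since $u\in C^1$ in time we have $\partial_t\Psi = \frac{i\gamma}{\nu}\Psi\,\partial_t\Gamma[u,C,t]$ with $\partial_t\Gamma = \int_0^1\partial_t u(C(\theta),t)\cdot C'(\theta)\,d\theta$. Inserting the identity of Lemma~\ref{l:lemma-vorticity} (equation \eqref{eq:NS2}), the gradient term $\nabla(\tfrac12|u|^2+p)$ contributes $\int_0^1\frac{d}{d\theta}\big(\tfrac12|u|^2+p\big)(C(\theta),t)\,d\theta = 0$ because $C$ is closed; this cancellation is \emph{exact} and requires no discretization. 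One is then left with the line integral of the diffusion field $\nabla\times\omega$ and the advection field $\omega\times u$, which splits over the $N$ edges as $\partial_t\Gamma = \sum_{k=0}^{N-1}\int_0^1 F(C_k + s\,\triangle C_k,t)\cdot\triangle C_k\,ds$, with $F$ the appropriate combination of $\nu\nabla\times\omega$ and $\omega\times u$.

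Next I would replace each edge integral by its value at the left endpoint $C_k$, i.e. $\int_0^1 F(C_k+s\triangle C_k)\cdot\triangle C_k\,ds = F(C_k)\cdot\triangle C_k + O(\|DF\|_{L^\infty}|\triangle C_k|^2)$. Under \eqref{eq:cond M/N} each $|\triangle C_k|\le M/N$ and there are $N$ edges, so this rectangle-rule error accumulates to $O(N^{-1})$, controlled by $\|u\|_{C^3}$. The surviving ``ideal'' discrete sum involves the pointwise values $(\nabla\times\omega)(C_k)$ and $(\omega\times u)(C_k)$ contracted against $\triangle C_k$, which are exactly the leading terms produced by $\nu\mathcal{D}_k$ (Lemma~\ref{l:diffusion}) and $\Omega_{k+3}\times U_k$ (Lemma~\ref{l:advect-operator}); after substituting $\mathcal{D}_k\Psi = (\nabla\times\omega)(C_k)\Psi + R^k_{\rm diff}$ and $(\Omega_{k+3}\times U_k)\Psi = (\omega\times u)(C_k)\Psi + R^k_{\rm adv}$ these assemble (with the sign conventions fixed by the operator definitions) into the combination $\Omega_{k+3}\times U_k - \nu\mathcal{D}_k$ of \eqref{eq:Psi1}. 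The index shift $k\mapsto k+3$ is what makes $U_k$ (which depends only on $C_{k-1},C_k,C_{k+1}$) and $\Omega_{k+3}$ (which differentiates $C_{k+3},C_{k+4}$) act on disjoint vertices; the resulting mismatch $\langle\omega\rangle_{k+3}(C_{k+3})-\omega(C_k)$ is of size $\|D\omega\|_{L^\infty}L_{k-1,k+3}\le C/N$ and is already absorbed into $R^k_{\rm adv}$.

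The remainder is thus $R_{\mathrm{loop}} = \frac{i\gamma}{\nu}\Psi\cdot(\text{rectangle error}) - \frac{i\gamma}{\nu}\sum_k\triangle C_k\cdot(R^k_{\rm adv}-\nu R^k_{\rm diff})$, and the whole game is to estimate it. Since each $|\triangle C_k|\le M/N$, one has $L_{k-1,k+1}\le C/N$, so Lemma~\ref{l:diffusion} gives $|R^k_{\rm diff}|\le C/N$; as $\sum_k|\triangle C_k|\le M$, the diffusion and rectangle contributions are both $O(N^{-1})$ and are dominated by the stated bound. \textbf{The main obstacle is the advection error.} Feeding the scalings of the stated choices (so that $\ell^{-1}=N^{\alpha}$, $\log(1/\rho)=\log N$, $\ell^{3/2}=N^{-3\alpha/2}$) and $L_{k-1,k+3}\le C/N$ into the three groups of terms in Lemma~\ref{l:advect-operator}, they contribute respectively $O(N^{-1})$, $O(\ell^{-1}\rho+\ell^{3/2})=O(N^{\alpha-1}+N^{-3\alpha/2})$, and $O(\ell^{-1}\log(1/\rho)\,L_{k-1,k+3})=O(N^{\alpha-1}\log N)$, giving $|R^k_{\rm adv}|\le \const\,(N^{\alpha-1}\log N + N^{-3\alpha/2})$. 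Summing and using $\sum_k|\triangle C_k|\le M$ preserves this order and yields \eqref{e:loop-error}.

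The delicate point is precisely the competition, inside $R^k_{\rm adv}$, between the $\ell^{-1}$ factors generated by the truncated Biot--Savart operator $\BS_\ell$ (which blow up as $\ell\to 0$) and the smoothing gain $\ell^{3/2}$ coming from the $H^2$-bound in Lemma~\ref{lemma:BSlog}; the exponent $\alpha\in(0,1)$ interpolates between the two, and one must carefully track the logarithm introduced both by $\log(1/\rho)$ and by the normalizing factor $1/\log(1/\rho)$ in the velocity operator $U_k$. The regularity hypotheses $u\in C^1(\R^3\times[0,T])\cap C([0,T],H^2\cap C^3)$ are exactly what is needed: $C^3$ in space (so $\omega\in C^2$) to apply Lemmas~\ref{l:diffusion} and \ref{l:advect-operator}, the $H^2$-norm to control the Biot--Savart error, and $C^1$ in time to differentiate $\Gamma$ and justify $\partial_t\Psi = \frac{i\gamma}{\nu}\Psi\,\partial_t\Gamma$.
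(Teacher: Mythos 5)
Your proposal is correct and follows essentially the same route as the paper's proof: differentiate $\Psi$ in time, use the vorticity formulation so that the pressure term cancels exactly on the closed loop, replace the edge integrals by a Riemann sum with $O(N^{-1})$ error, and then substitute the expansions of Lemmas~\ref{l:diffusion} and \ref{l:advect-operator} with $\ell = N^{-\alpha}$, $\rho = 1/N$, collecting $R_{\rm riem}$ and the weighted sums $\sum_k \triangle C_k \cdot (R^k_{\rm adv} - \nu R^k_{\rm diff})$ into $R_{\rm loop}$. Your accounting of the three error groups in Lemma~\ref{l:advect-operator} (yielding $N^{-1}$, $N^{\alpha-1}+N^{-3\alpha/2}$, and $N^{\alpha-1}\log N$ respectively) reproduces exactly the paper's final estimate.
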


\begin{proof} 
We first use \eqref{e:loop-derivative} to get 
\[
\partial_t \Psi = \frac{i\gamma}{\nu} \Psi \int_0^1 \big[((\omega\times u) - \nu \nabla \times \omega) (C(\theta), t) \big]
\cdot C' (\theta)\, d\theta\, ,
\]
where the pressure term vanishes since its circulation is zero.
We now split the integral as a sum over the segments $[C_k,C_{k+1}]$ and observe that we can replace it therefore with the following Riemann sum (up to an error) 
\[
\partial_t \Psi = \frac{i\gamma}{\nu} \sum_{k=0}^{N-1} (C_{k+1}-C_k) \cdot ((\omega\times u) (C_k) -\nu (\nabla\times \omega) (C_k)) \Psi + R_{{\rm riem}}\, ,
\]
where we can estimate 
\[
|R_{{\rm Riem}}| \leq \const \frac{\gamma}{\nu} M N^{-1} (\|D^2 \omega\|_{L^\infty}+ \|Du\|_{L^\infty}^2 + \|Du\|_{L^\infty} \|D\omega\|_{L^\infty})\, .
\]
We next apply Lemma \ref{l:diffusion} and Lemma \ref{l:advect-operator} with parameters $\rho=1/N$ and $\ell = N^{-\alpha}$
to replace each instance of $(\omega\times u) (C_k) \Psi$ with $(\Omega_{k+3}\times U_k) \Psi$ and each instance of $-\nu (\nabla \times \omega) (C_k) \Psi$ with $-\nu \mathcal{D}_k \Psi$, obtaining
the sought identity \eqref{eq:Psi1}, with
\begin{equation}
    R_{\rm loop} = R_{\rm riem} + \frac{i \gamma}{\nu}\sum_{k=0}^{N-1}(C_{k+1} - C_{k}) \cdot (R^k_{\rm adv} - \nu R^k_{\rm diff}).
\end{equation}
We obtain the estimate:
\begin{equation}
    \begin{split}
        |R_{\rm loop}| 
        &\le |R_{\rm riem}| + \frac{\gamma}{\nu} M N^{-1}\sum_{k=0}^{N-1}(|R^k_{\rm adv}| + \nu |R^k_{\rm diff}|)
        \\&
        \le |R_{\rm riem}|
        + \frac{\gamma}{\nu} M N^{-1}\sum_{k=0}^{N-1}
        \const(\|u\|_{C^3}, \|u \|_{H^2}, \frac{\gamma}{\nu})
        \Big(N^{\alpha-1}\log N + N^{-3\alpha/2} + \nu \frac{M}{N}\Big)
        \\& \le \const(M,\nu,\gamma, \alpha, \|u\|_{C^3}, \|u\|_{H^2})\,\Big(N^{\alpha-1}\log N + N^{-3\alpha/2}\Big).
    \end{split}
\end{equation}
\end{proof}

\subsection{Discretized Equation in Momentum Variables} 
Recall next that the aim is to understand the action of the operators when $\Psi$ is expressed in ``momentum variables'', namely it is of the form
\begin{equation}\label{e:Psi-in-P}
\Psi [P, C, t] = \exp \left\{\frac{i\gamma}{\nu} \sum_{k=0}^{N-1} P_k (t) \cdot \triangle C_k \right\}
= \exp \left\{ - \frac{i\gamma}{\nu} \sum_{k=0}^{N-1} C_k \cdot \triangle P_{k-1}(t)\right\}\, .
\end{equation}
where
\[
\triangle P_k := P_{k+1}-P_k\, .
\]

\begin{remark}
In \eqref{e:Psi-in-P} we make the unrealistic assumption that $\Psi$ is deterministic in the momentum variables, that is, $\beta$ is a delta measure concentrated at $P(t)$. However, this simplification does not affect our argument, since it is pathwise.
\end{remark}

We use Proposition \ref{prop:simple_waves} to get the following

\begin{theorem}\label{t:loop-equation-discretized-2}
Fix an integer $N \ge 2$ and parameters $M, \Lambda \ge 1$, $\alpha \in (0,1)$. 
Set $\ell = N^\alpha$ and $\rho = N$ in the definition of the velocity operator \eqref{eq:vel_op}. 
Assume that 
\begin{equation}\label{e:tecnica}
{\rm Im}\, \triangle P_k = 0 \quad\mbox{and} \quad   \frac{1}{\Lambda} \le |\triangle P_k| \le \Lambda,
   \quad k=0,\ldots, N-1
\end{equation}
Let $C \in \mathcal{L}_N \R^3$ be a polygonal loop satisfying \eqref{eq:cond M/N}.
Then 
\begin{align}
& \left(\partial_t  
- \frac{i\gamma}{\nu}\sum_{k=0}^{N-1} \triangle C_k \cdot \big(\Omega_{k+3} \times U_k - \nu \mathcal{D}_k\big)\right) 
\Psi
\nonumber\\
= & \frac{i \gamma}{\nu} \Psi \sum_{k=0}^{N-1} \triangle C_k \cdot \left(\frac{d}{dt}P_k(t) - E_k(t)\right) + R_{\rm mo}\label{e:altra-formula}
\end{align}
where
\begin{equation}\label{eq:E_k}
\begin{split}
    E_k := \frac{1}{\log(N)}&\frac{i\gamma}{\nu} (\triangle P_{k+3} \times \triangle P_{k+2}) \times \left( \triangle P_k - \frac{\triangle P_{k-1}\, (\triangle P_{k-1}\cdot \triangle P_k)}{|\triangle P_{k-1}|^2}\right)
        \\& + \frac{2\gamma^2}{\nu} (\triangle P_k\, |\triangle P_{k-1}|^2 - \triangle P_{k-1}\, (\triangle P_{k-1}\cdot \triangle P_k)).
\end{split}    
\end{equation}
and the momentum error $R_{\rm mo}$ satisfies the following estimate:
\[
|R_{\rm mo}| \leq \const(m,\gamma,\nu,\Lambda) M N^{-\alpha m}\, ,
\quad
\text{for every $m>0$}\, .
\]
.
\end{theorem}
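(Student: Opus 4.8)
The plan is to evaluate each operator on the left-hand side of \eqref{e:altra-formula} directly on the ``simple wave'' $\Psi$ of \eqref{e:Psi-in-P}, exploiting that in momentum variables the phase $\tfrac{\gamma}{\nu}\sum_j P_j\cdot\triangle C_j$ is \emph{linear} in the vertices. First I would record the elementary chain-rule identities: since $\nabla_{C_k}\big(\sum_j P_j\cdot\triangle C_j\big) = -\triangle P_{k-1}$, one has $\nabla_{C_k}\Psi = -\tfrac{i\gamma}{\nu}\triangle P_{k-1}\Psi$, and hence $\partial_t\Psi = \tfrac{i\gamma}{\nu}\Psi\sum_k\triangle C_k\cdot\tfrac{d}{dt}P_k$. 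The time derivative already supplies the $\tfrac{d}{dt}P_k$ term on the right, so the whole content of the theorem is the identification of the operator terms with $-E_k$.

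Next I would compute the vorticity and diffusion operators exactly. Because the phase is linear, the term $\nabla_{C_{k+1}}\times\nabla_{C_k}\Gamma$ in \eqref{e:chain} vanishes, leaving $\Omega_k\Psi = -\tfrac{i\gamma}{\nu}(\triangle P_k\times\triangle P_{k-1})\Psi =: \tilde\omega_k\Psi$ with $\tilde\omega_k$ a constant ``momentum-space vorticity'' vector; the shifted formula gives $\Omega_{k+3}\Psi = \tilde\omega_{k+3}\Psi$. Feeding this into $\mathcal{D}_k = 2\nabla_{C_k}\times\Omega_k$ and using $\nabla_{C_k}\times(\tilde\omega_k\Psi) = (\nabla_{C_k}\Psi)\times\tilde\omega_k$ together with the triple-product identity yields, again exactly, $\nu\mathcal{D}_k\Psi = -\tfrac{2\gamma^2}{\nu}\big(\triangle P_k\,|\triangle P_{k-1}|^2 - \triangle P_{k-1}\,(\triangle P_{k-1}\cdot\triangle P_k)\big)\Psi$, which is precisely the second line of $E_k$ in \eqref{eq:E_k}.

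The crux is the velocity and advection operators. As a function of $C_k$, the field $\Omega_k\Psi = \tilde\omega_k\Psi$ is a single plane wave $\tilde\omega_k\,e^{-i\frac{\gamma}{\nu}\triangle P_{k-1}\cdot C_k}$ with \emph{real} wave vector $\xi_k := -\tfrac{\gamma}{\nu}\triangle P_{k-1}$ (using ${\rm Im}\,\triangle P_k = 0$ from \eqref{e:tecnica}), and $\tilde\omega_k\perp\xi_k$, so by Proposition~\ref{prop:simple_waves} the regularized Biot--Savart operator acts as a Fourier multiplier, ${\rm BS}_\ell[\Omega_k\Psi] = \tfrac{i\,\xi_k\times\tilde\omega_k}{|\xi_k|^2}\Psi + (\text{error})$, the error arising only from $\widehat{\chi(\ell\,\cdot)/|\cdot|}(\xi_k) - 4\pi/|\xi_k|^2$ and being super-polynomially small in $\ell$ because $\chi$ is smooth and, by \eqref{e:tecnica}, $\tfrac1\Lambda\le |\xi_k|\,\nu/\gamma\le\Lambda$ keeps $\xi_k$ bounded away from $0$ and $\infty$. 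Dividing by $\log(1/\rho) = \log N$ and simplifying the triple product shows $U_k\Psi = W_k\Psi$ with $W_k = -\tfrac{1}{\log N}\big(\triangle P_k - \triangle P_{k-1}(\triangle P_{k-1}\cdot\triangle P_k)/|\triangle P_{k-1}|^2\big)$ up to a super-polynomially small correction. Crucially $W_k$ is a constant vector, independent of every $C_j$, so $\Omega_{k+3}$ passes through it and $(\Omega_{k+3}\times U_k)\Psi = (\Omega_{k+3}\Psi)\times W_k = \tilde\omega_{k+3}\times W_k\,\Psi$, which reproduces exactly the first line of $E_k$.

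Finally I would insert the three contributions into $-\tfrac{i\gamma}{\nu}\sum_k\triangle C_k\cdot(\cdots)$ and collect everything into $\tfrac{i\gamma}{\nu}\Psi\sum_k\triangle C_k\cdot(\tfrac{d}{dt}P_k - E_k)$. The remainder $R_{\rm mo}$ gathers the accumulated Biot--Savart multiplier errors; each is super-polynomially small, carries a factor $|\triangle C_k|$ and the bounded weight $|\tilde\omega_{k+3}|\lesssim\Lambda^2$, and using $\sum_k|\triangle C_k| = |C|\le M$ from \eqref{eq:cond M/N} yields $|R_{\rm mo}|\le\const(m,\gamma,\nu,\Lambda)\,M\,N^{-\alpha m}$ for every $m$. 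The only genuinely nontrivial point is the plane-wave analysis of the \emph{regularized} kernel (Proposition~\ref{prop:simple_waves}): everything else is exact algebra for linear phases, so the main obstacle is controlling the multiplier error of ${\rm BS}_\ell$ uniformly over the admissible wave vectors $\xi_k$.
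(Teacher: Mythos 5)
Your proposal is correct and follows essentially the same route as the paper's proof: exact chain-rule algebra for the linear phase yields $\partial_t\Psi$, $\Omega_k\Psi=-\tfrac{i\gamma}{\nu}(\triangle P_k\times\triangle P_{k-1})\Psi$ and $\nu\mathcal{D}_k\Psi$, while Proposition~\ref{prop:simple_waves} treats $\Omega_k\Psi$ as a plane wave with real wave vector proportional to $\triangle P_{k-1}$ (here \eqref{e:tecnica} enters exactly as you say), giving $U_k\Psi=W_k\Psi$ up to a Schwartz-tail multiplier error. Your observation that $W_k$ is a constant vector through which $\Omega_{k+3}$ passes, and the final summation using $\sum_k|\triangle C_k|\le M$, are precisely the paper's concluding steps.
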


\begin{remark}\label{r:marginali?-2}
A remark here is in order. 
Note that in Theorem \ref{t:loop-equation-discretized-2} we are investigating what happens to a single trajectory $t\mapsto (P_0 (t), \ldots , P_{N-1} (t))$ when we apply the discretized operator which approximates the infinite-dimensional operator appearing in \eqref{e:weak-loop-equation}.    
However, the discretized momentum variables $P_k$ arose when taking a Fourier transform, see Section \ref{subsec:discretizedmomentum} , giving us a ``time-evolving generalized function $\beta_t$'' in $P_k$ as in \eqref{e:vero-beta}. Assuming for the moment that $\beta_t$ has enough regularity to be at least a ``signed measure'', then the computation in Theorem \ref{t:loop-equation-discretized-2} is relevant if we know that, at least up to some further error terms in $N^{-1}$, $\beta_t$ is the time marginal of a measure $\beta$ which is concentrated on a particular family of trajectories $\{P_k (t)\}$, which has enough regularity to be differentiated at least once in the variable $t$. Whether and when we can expect such a property to hold is a very challenging problem, which is the analog of the one raised in Remark \ref{r:marginali?}.  
\end{remark}

\begin{proof}
Fix $0\le k \le N-1$. It suffices to show that
\begin{equation}
    (\Omega_{k+3} \times U_k   - \nu \mathcal{D}_k)\Psi = (E_k + R_{\rm mo}^k) \Psi
\end{equation}
where $|R_{\rm mo}^k| \le \const(m,\gamma,\nu,\Lambda) N^{-\alpha m}$ for every $m\ge 1$.

Using the identity
\begin{equation}
\Psi [P, C, t]
= \exp \left\{ - \frac{i\gamma}{\nu} \sum_{k=0}^{N-1} C_k \cdot \triangle P_{k-1}(t)\right\}\, ,
\end{equation}
we easily obtain the following equalities
\begin{equation}
\begin{split}
        \Omega_k \Psi &= - \frac{i \gamma}{\nu} \triangle P_k \times \triangle P_{k-1} \, \Psi
        \\
        \mathcal{D}_k \Psi &= -\frac{2 \gamma^2}{\nu^2} \triangle P_{k-1} \times (\triangle P_k \times \triangle P_{k-1})\, \Psi
        \\& = - \frac{2\gamma^2}{\nu^2} (\triangle P_k\, |\triangle P_{k-1}|^2 - \triangle P_{k-1}\, (\triangle P_{k-1}\cdot \triangle P_k))\, \Psi
\end{split}
\end{equation}
By  Proposition \ref{prop:simple_waves}, we obtain
\begin{equation}
\begin{split}
   U_k \Psi &= - \frac{1}{\log(N)} \frac{\triangle P_{k-1}}{|\triangle P_{k-1}|^2} \times (\triangle P_k \times \triangle P_{k-1}) (1 + R(N^\alpha \gamma \nu^{-1} \triangle P_{k-1}))\,\Psi  
   \\&
   =  - \frac{1}{\log(N)} \left( \triangle P_k - \frac{\triangle P_{k-1}\, (\triangle P_{k-1}\cdot \triangle P_k)}{|\triangle P_{k-1}|^2}\right)(1 + R(N^\alpha \gamma \nu^{-1} \triangle P_{k-1}))\,\Psi
\end{split} 
\end{equation}
where $R\in \mathcal{S}$, is a fixed Schwartz function.
We have all the ingredients to make the following explicit:
\begin{equation*}
\Omega_{k+3} \times U_k \Psi 
 =
\frac{i\gamma}{\nu\log(N)} (\triangle P_{k+3}  \times \triangle P_{k+2}) \times \left( \triangle P_k - \tfrac{\triangle P_{k-1}\, (\triangle P_{k-1}\cdot \triangle P_k)}{|\triangle P_{k-1}|^2}\right)\Psi 
+ R_{\rm mo}^k \Psi
\end{equation*}
where, $R^k_{\rm mo}$ satisfies the following estimate
\begin{equation}
\begin{split}
    |R^k_{\rm mo}| &\le \frac{2}{\log(N)}\frac{\gamma}{\nu}|\triangle P_{k+3}||\triangle P_{k+1}||\triangle P_k| |R(N^\alpha\gamma\nu^{-1}\triangle P_{k-1})| 
    \\& \le \const(m) \frac{\gamma}{\nu}\frac{\Lambda^3}{\log(N)}\frac{1}{(1 + N^\alpha \gamma \nu^{-1}\Lambda^{-1})^m}
    \\& \le \const(m,\gamma,\nu,\Lambda) N^{-\alpha m},
\end{split}
\end{equation}
for every $m\ge 1$, as $R\in \mathcal{S}$.
\end{proof}

\subsection{Comparison to Migdal's Discretized Momentum Loop Equation}\label{s:discussione}
Following Migdal, the next step is to take the main expression in \eqref{e:altra-formula} and set it to zero for all $C$'s, in order to derive an equation for a of evolving curves $P_k (t)$, $k\in \{0, \ldots , N-1\}$. This leads to the system
\begin{equation}\label{e:sistema}
\frac{d}{dt} P_k (t) - E_k (t) = {\rm const}\, ,
\end{equation}
where ${\rm const}$ is a ``constant vector'', in other words a vector independent of $k$.

In Migdal's theory, however, the corresponding analogous equations take a different form. In order to discuss the main difference we will set the constant $\nu=1$ and retain rather the dependence upon $\gamma$, which clearly highlights two different terms in \eqref{eq:E_k}, one which is linear in $\gamma$ and one which is quadratic. We report it here the formula for the reader's convenience:
\begin{equation}\label{e:nostra}
\begin{split}
    E_k := \frac{1}{\log(N)}& i \gamma (\triangle P_{k+3} \times \triangle P_{k+2}) \times \left( \triangle P_k - \frac{\triangle P_{k-1}\, (\triangle P_{k-1}\cdot \triangle P_k)}{|\triangle P_{k-1}|^2}\right)
        \\& + \gamma^2 (\triangle P_k\, |\triangle P_{k-1}|^2 - \triangle P_{k-1}\, (\triangle P_{k-1}\cdot \triangle P_k))\, .
\end{split}    
\end{equation}
One technical discrepancy which does not affect the final outcome is that in the derivation of the discretized equations we use a ``regularized Biot-Savart'' operator at the the level of $N$, while Migdal uses (formally) the usual Biot-Savart operator. This means that while we get the additional error term $R_{\rm mo}$ in \eqref{e:altra-formula}, in the corresponding computation Migdal gets instead an exact identity.

\medskip

The really relevant difference stems from the use of the two different discretizations explained in the previous sections. Given his choice, Migdal in \cite{Migdal24b} derives the following system 
\begin{equation}\label{e:Sasha-system}
\frac{d}{dt} \frac{P_k (t) + P_{k-1} (t)}{2} - \mathcal{E}_k (t) = 0\, ,
\end{equation}
where the terms $\mathcal{E}$ take the form
\begin{equation}\label{e:Sasha}
\begin{split}
\mathcal{E}_k := &  \frac{i \gamma}{\log N}\Bigg(\frac{\big({\textstyle{\frac{P_k+P_{k-1}}{2}}}\cdot \triangle P_{k-1}\big)^2}{|\triangle P_{k-1}|^2} - \Big(\frac{P_k + P_{k-1}}{2}\Big)\cdot  \Big(\frac{P_k + P_{k-1}}{2}\Big)\Bigg) \triangle P_{k-1}\\
& \quad + \gamma^2 \Big(\big({\textstyle{\frac{P_k+P_{k-1}}{2}}}\cdot \triangle P_{k-1}\big) \triangle P_{k-1} - |\triangle P_{k-1}|^2 {\textstyle{\frac{P_k+P_{k-1}}{2}}}\Big)\, 
\end{split}
\end{equation}
We note that the logarithmic correction in the term which is linear in $\gamma$ is absent in Migdal's earlier works (see e.g. \cite{Migdal23}) and it appears for the first time in \cite{Migdal24b}. This correction was in fact introduced by Migdal after preliminary discussions with the authors of this paper about the effect of the Biot-Savart operator in the far field, which was initially ignored in \cite{Migdal23}.

\begin{remark}\label{r:non-hermite}
We warn the reader that, while we are imposing that $\triangle P_k$ is real, we are not requiring the same property upon $P_k$. Moreover the ``scalar products'' $(P_k+P_{k-1}) \cdot (P_k+P_{k-1})$ and $(P_k+P_{k-1}) \cdot \triangle P_k$ denote the sums of the products of the (complex) components of the corresponding vectors and {\em not} the usual hermitian product between the two complex vectors. In particular
\[
(P_k+P_{k-1}) \cdot (P_k+P_{k-1}) \neq |P_k+P_{k-1}|^2\, .
\]
On the other hand, given \eqref{e:tecnica}, we do have $\triangle P_k \cdot \triangle P_k = |\triangle P_k|^2$. 
\end{remark}

Migdal then finds a particular family of solutions of \eqref{e:Sasha-system} which decouples the time variable and the discrete variable $k$. More precisely, let us postulate the existence of 
a solution to the system \eqref{e:Sasha-system} of the form
\[
P_k (t) = \sqrt{\frac{1}{2(t+t_0)}} \frac{G_k}{\gamma}\, ,
\]
where the $G_k$ are ``constant'' vectors, i.e. they do not depend on time. We then following Migdal again and introduce the new variables
\[
F_k := \frac{G_k + G_{k-1}}{2}
\]
and 
\[
\triangle F_k = G_k - G_{k-1}\, ,
\]
and we also impose, consistently with \eqref{e:tecnica}, that 
\begin{equation}\label{e:reale}
{\rm Im}\, \triangle F_k = 0 \quad\mbox{and}\quad
{\rm Re}\, \triangle F_k \neq 0\, .
\end{equation}
We then find the system of algebraic equations
\begin{equation}\label{e:sistema-giusto}
\left[\frac{1}{ \log(N)} \left( \frac{(F_k \cdot \triangle F_k)^2}{|\triangle F_k|^2} - F_k \cdot F_k\right)
    - i \gamma(F_k \cdot \triangle F_k) \right]
    \triangle F_k
    - i \gamma (1 - |\triangle F_k|^2) F_k  = 0.
\end{equation}
\begin{remark}\label{r:non-hermite-2}
As in Remark \ref{r:non-hermite}, we stress that we are imposing $\triangle F_k\in \mathbb R^3$, but that $F_k$ might have nonzero imaginary part: this is indeed a key point in Migdal's discretized Euler ensemble. Therefore we do not have
\[
F_k \cdot F_k = |F_k|^2\, ,
\]
and we will indeed see below that for the actual Euler ensemble $F_k \cdot F_k$ vanishes even though $F_k \neq 0$!
On the other hand, given \eqref{e:reale}, we still have $\triangle F_k \cdot \triangle F_k = |\triangle F_k|^2$. 
\end{remark}
If we introduce the notation 
\begin{align*}
I_a &:= \Big(\frac{(F_k \cdot \triangle F_k)^2}{|\triangle F_k|^2} - F_k \cdot F_k\Big) \triangle F_k\\
I_b &:= (F_k \cdot \triangle F_k) \triangle F_k\\
I_c &:= (1-|\triangle F_k|^2) F_k\, .
\end{align*}
we can then rewrite \eqref{e:sistema-giusto} as 
\begin{equation}\label{e:giusto-2}
(\log (N))^{-1} I_a - i\gamma I_b - i \gamma I_c = 0\, .
\end{equation}
This final equation must be compared to \cite[Eq. (47)]{Migdal23}, which, using the notation above, takes instead the form
\begin{equation}\label{e:Sasha-system-2}
 I_a - i\gamma I_b - i \gamma I_c = 0\, .
\end{equation}
However, following \cite[Section 3]{Migdal23}, the special set of solutions found by Migdal in  \cite[Section 3]{Migdal23}, which amounts to a particular choice of the vectors $F_k$, solve in fact the three equations
\begin{equation}\label{e:tutti-zero}
I_a = I_b = I_c = 0\, .
\end{equation}
More precisely, Migdal sets $G_k$ to be equal to $iA + f_k$ for suitable {\em real} vectors $A, f_k \in \mathbb R^3$ and imposes the following set of conditions on them (cf. \cite[(125), (126), (127), and (129a)]{Migdal23}):
\begin{align}
|f_k-f_{k-1}|^2 &= 1\label{e:modulo-1}\\
A\cdot f_k &=0 \label{e:ortog}\\
|f_k|^2 &= |f_{k-1}|^2 \label{e:ortog2}\\
4|A|^2 &= |f_k+f_{k+1}|^2\, .\label{e:nascosta}
\end{align} 
From these conditions it follows easily that
\begin{itemize}
\item $I_c =0$ because
\[
|\triangle F_k|^2 = |f_k -f_{k-1}|^2 \stackrel{\eqref{e:modulo-1}}{=} 1\, ;
\]
\item $I_b =0$ because 
\[
F_k \cdot \triangle F_k = i A \cdot f_k - i A \cdot f_{k-1} + |f_k|^2 - |f_{k-1}|^2 \stackrel{\eqref{e:ortog}\&\eqref{e:ortog2}}{=} 0\, ;
\]
\item $I_c =0$ because $F_k \cdot \triangle F_k =0$ and 
\[
F_k \cdot F_k \stackrel{\eqref{e:ortog}}{=} - |A|^2 + \frac{|f_{k+1} + f_k|^2}{4} \stackrel{\eqref{e:nascosta}}{=} 0\, .
\] 
\end{itemize}
Finally, cf. \cite[Theorem 1, Section 13.2]{Migdal23}, solutions to the equations \eqref{e:modulo-1}-\eqref{e:nascosta} are found explicitly. They have, however, a quite elegant geometric interpretation: the vectors $f_k$ turn out to be the vertices of a star polygon with the Schl\"afli symbol $\{q/p\}$, inscribed in a planar circle that is orthogonal to the vector $A$. These computations can be summarized in the following.

\begin{theorem}\label{t:Sasha-finale}
Let $\{P_k\}$ be an element in the discretized Euler ensemble described above and consider the functional $\Psi$ in \eqref{e:Psi-in-P}. Then the discretized Euler ensemble satisfies \eqref{e:Sasha-system}. 
\end{theorem}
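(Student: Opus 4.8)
The plan is to substitute the self-similar ansatz $P_k(t) = \lambda(t)\,G_k/\gamma$, with $\lambda(t) := (2(t+t_0))^{-1/2}$, directly into the system \eqref{e:Sasha-system}, and to exploit the fact that every term occurring there is homogeneous of degree three in the $P$-variables. This homogeneity is precisely what the power $(t+t_0)^{-1/2}$ is engineered to produce: it forces a common time factor $\lambda^3$ to factor out of the whole equation, collapsing the dynamical system onto the stationary algebraic identity \eqref{e:giusto-2}, which has already been verified.

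First I would record the two elementary relations produced by the ansatz. Since $\tfrac{P_k+P_{k-1}}{2} = \tfrac{\lambda}{\gamma}F_k$ and $\triangle P_{k-1} = P_k - P_{k-1} = \tfrac{\lambda}{\gamma}\triangle F_k$ (here the index shift between $\triangle P_{k-1}$ and $\triangle F_k = G_k - G_{k-1}$ must be tracked carefully, and one uses that $\triangle F_k$ is real by \eqref{e:reale}, so that $|\triangle P_{k-1}|^2 = \tfrac{\lambda^2}{\gamma^2}|\triangle F_k|^2$), a direct computation using $\lambda' = -\lambda^3$ gives
\[
\frac{d}{dt}\frac{P_k+P_{k-1}}{2} = \lambda'\,\frac{F_k}{\gamma} = -\frac{\lambda^3}{\gamma}F_k\,.
\]

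Next I would rewrite $\mathcal{E}_k$ from \eqref{e:Sasha} in the $G$-variables. Substituting the two relations above and collecting powers of $\lambda$, the term linear in $\gamma$ becomes $\tfrac{i\lambda^3}{\gamma^2\log N}I_a$, while the term quadratic in $\gamma$ becomes $\tfrac{\lambda^3}{\gamma}\bigl(I_b - |\triangle F_k|^2 F_k\bigr)$; using $|\triangle F_k|^2 F_k = F_k - I_c$ this equals $\tfrac{\lambda^3}{\gamma}\bigl(I_b + I_c - F_k\bigr)$. As stressed in Remarks \ref{r:non-hermite} and \ref{r:non-hermite-2}, all the products $\cdot$ appearing here are the bilinear (not Hermitian) products, so this bookkeeping is consistent with the very definitions of $I_a, I_b, I_c$. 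Altogether
\[
\mathcal{E}_k = \frac{\lambda^3}{\gamma}\left(\frac{i}{\gamma\log N}I_a + I_b + I_c - F_k\right)\,.
\]

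Equating this with the time derivative computed above, the common factor $\lambda^3/\gamma$ cancels and the two copies of $F_k$ cancel as well, leaving $0 = \tfrac{i}{\gamma\log N}I_a + I_b + I_c$; multiplying by $-i\gamma$ recovers precisely \eqref{e:giusto-2}. To conclude I would invoke the computation already carried out in the excerpt: under the conditions \eqref{e:modulo-1}--\eqref{e:nascosta} on the real vectors $A$ and $f_k$ (with $G_k = iA + f_k$) one has $I_a = I_b = I_c = 0$, so \eqref{e:giusto-2} holds trivially and the ansatz solves \eqref{e:Sasha-system} for every $t$. The main obstacle is not a single hard estimate -- the genuinely nontrivial algebra ($I_a = I_b = I_c = 0$) is already established -- but rather the careful bookkeeping of the homogeneity and the index shifts: one must check that every contribution scales exactly as $\lambda^3$ and that the two $F_k$ terms cancel, since it is precisely this cancellation that reduces the time-dependent problem to the stationary algebraic system.
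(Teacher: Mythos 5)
Your proposal is correct and follows essentially the same route as the paper: substitute the self-similar ansatz $P_k(t)=\lambda(t)G_k/\gamma$ with $\lambda'=-\lambda^3$ into \eqref{e:Sasha-system}, factor out the common $\lambda^3/\gamma$ (noting the cancellation of the two $F_k$ terms) to reduce to the stationary algebraic system \eqref{e:sistema-giusto}/\eqref{e:giusto-2}, and then conclude from $I_a=I_b=I_c=0$ under \eqref{e:modulo-1}--\eqref{e:nascosta}. In fact you supply the bookkeeping of the substitution and homogeneity (including the index shift $\triangle P_{k-1}=\tfrac{\lambda}{\gamma}\triangle F_k$) in more detail than the paper, which simply asserts the reduction.
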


We finish this discussion with a mention of the latest work \cite{Migdalnuovo}. In the latter Migdal claims that, using the liquid loop approach, the corresponding discretized equation in the momentum variables reduces to the nullification of the operator 
\begin{equation}\label{e:operatore-liquido}
\Big(\partial_t + i\gamma \sum_{k=1}^{N-2} \mathcal{D}^M_k\Big)\, ,
\end{equation}
up to error terms which vanish in the limit $N\to \infty$. The corresponding equation in the discretized momentum variables for the latter operator is 
\begin{equation}\label{e:Sasha-liquido}
 \frac{d}{dt} \frac{P_k (t) + P_{k-1} (t)}{2} - \mathcal{E}^\ell_k (t) = 0\, ,
\end{equation}
where the terms $\mathcal{E}^\ell_k$ take the form
\begin{equation}\label{e:Sasha-liquido-2}
\mathcal{E}^\ell_k = \gamma^2 \Big(\big({\textstyle{\frac{P_k+P_{k-1}}{2}}}\cdot \triangle P_{k-1}\big) \triangle P_{k-1} - |\triangle P_{k-1}|^2 {\textstyle{\frac{P_k+P_{k-1}}{2}}}\Big)
\end{equation}
In particular from the computations outlined above it follows readily that 

\begin{theorem}\label{t:Sasha-liquido}
The discretized Euler ensemble solves the system \eqref{e:Sasha-liquido}. 
\end{theorem}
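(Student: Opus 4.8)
The plan is to verify \eqref{e:Sasha-liquido} by direct substitution of the self-similar ansatz, exploiting the key structural feature of the liquid loop approach: the equation retains \emph{only} the quadratic-in-$\gamma$ term $\mathcal{E}^\ell_k$ (the contribution of the diffusion operator $\mathcal{D}^M_k$), with no advection term present. Writing $g(t) := (2(t+t_0))^{-1/2}$ and $c(t) := g(t)/\gamma$, the ansatz $P_k(t) = g(t) G_k/\gamma$ gives $\tfrac{P_k+P_{k-1}}{2} = c(t) F_k$ and $\triangle P_{k-1} = c(t)\,\triangle F_k$, where $F_k = \tfrac{G_k+G_{k-1}}{2}$ and $\triangle F_k = G_k - G_{k-1}$ are time-independent vectors.

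First I would compute the left-hand side of \eqref{e:Sasha-liquido}: since $g' = -g^3$ (the defining self-similar scaling), one gets $\tfrac{d}{dt}\tfrac{P_k+P_{k-1}}{2} = -g(t)^3 F_k/\gamma$. Next I would compute $\mathcal{E}^\ell_k$ from \eqref{e:Sasha-liquido-2}. Because $c(t)$ is real and the relevant products are the bilinear (non-Hermitian) ones emphasized in Remark \ref{r:non-hermite-2}, each of the three factors of $P$ in $\mathcal{E}^\ell_k$ contributes a factor $c(t)$, so a common $c(t)^3$ factors out, yielding
\[
\mathcal{E}^\ell_k = \gamma^2 c(t)^3 \big[(F_k\cdot \triangle F_k)\,\triangle F_k - |\triangle F_k|^2 F_k\big] = \frac{g(t)^3}{\gamma}\big[(F_k\cdot \triangle F_k)\,\triangle F_k - |\triangle F_k|^2 F_k\big].
\]
Thus both sides of \eqref{e:Sasha-liquido} scale identically, like $g(t)^3/\gamma$, and the differential equation collapses to the purely algebraic identity $(F_k\cdot \triangle F_k)\,\triangle F_k - |\triangle F_k|^2 F_k = -F_k$.

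Finally I would invoke the algebraic relations already established for the discretized Euler ensemble in the derivation of Theorem \ref{t:Sasha-finale}: from \eqref{e:ortog} and \eqref{e:ortog2} one has $F_k\cdot \triangle F_k = 0$ (i.e.\ $I_b = 0$), while from \eqref{e:modulo-1} one has $|\triangle F_k|^2 = |f_k - f_{k-1}|^2 = 1$ (i.e.\ $I_c = 0$). Substituting these into the algebraic identity gives $0\cdot \triangle F_k - 1\cdot F_k = -F_k$, exactly as required, so \eqref{e:Sasha-liquido} holds for every admissible $k$.

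I do not expect a genuine obstacle here: the statement is a direct verification and is in fact \emph{easier} than Theorem \ref{t:Sasha-finale}, since the liquid approach eliminates the advection operator and hence the term governed by $I_a$; we therefore need only $I_b = I_c = 0$ and never use condition \eqref{e:nascosta} (equivalently the vanishing $F_k\cdot F_k = 0$). The only conceptual content is the matching of the self-similar time scaling $g' = -g^3$ with the cubic homogeneity of $\mathcal{E}^\ell_k$, which is precisely what makes the decaying-turbulence ansatz consistent with the diffusion-only equation.
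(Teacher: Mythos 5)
Your proof is correct and is essentially the paper's own argument: the paper disposes of Theorem \ref{t:Sasha-liquido} by noting it "follows readily" from the verification that $I_b = I_c = 0$ for the Euler ensemble, which is exactly the substitution of the self-similar ansatz and reduction to the algebraic identity $(F_k\cdot \triangle F_k)\,\triangle F_k - |\triangle F_k|^2 F_k = -F_k$ that you carry out explicitly. Your observation that only \eqref{e:modulo-1}--\eqref{e:ortog2} are needed (so $I_a=0$, hence \eqref{e:nascosta}, plays no role) is likewise consistent with the paper's discussion.
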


On the other hand Lemma \ref{l:diffusion migdal} and the same arguments as in the proof of Theorem \ref{t:loop-equation-discretized} yield immediately the following theorem.

\begin{theorem}\label{t:loop-equation-discretized-3}
Fix an integer $N \ge 2$ and a parameter $M\geq 1$. 
Assume that $u \in C^1(\R^3 \times [0,T]) \cap C\big([0,T], C^3)$
is a divergence-free time-dependent vector field,
let $C \in \mathcal{L}_N \R^3$ be a polygonal loop satisfying \eqref{eq:cond M/N}, and define
\begin{equation}
\Psi[u,C,t] 
= \exp\!\left\{\frac{i\gamma}{\nu} \int_0^1 u(C(\theta),t) \cdot C'(\theta)\, d\theta\right\}.
\end{equation}
Then
\begin{equation}\label{eq:Psi-versione-3}
\left(\partial_t  
+ i \gamma \sum_{k=1}^{N-2} \triangle C_k \cdot \mathcal{D}^M_k\right) 
\Psi
= \Psi \int_0^1 \big[\partial_t v + \nu \nabla \times \omega\big] (C (\theta), t) \cdot C' (\theta)\, d\theta + R_{\mathrm{loop}},
\end{equation}
where the remainder satisfies
\begin{equation}\label{e:loop-error-3}
|R_{\mathrm{loop}}| 
\le \const(M,\nu,\gamma, \|u\|_{C^3})\, N^{-1}\, .
\end{equation}
\end{theorem}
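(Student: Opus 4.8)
The plan is to mimic the proof of Theorem~\ref{t:loop-equation-discretized}, the decisive simplification being that no advection operator (and hence no appeal to Lemma~\ref{l:advect-operator}) is needed here: in this ``liquid loop'' formulation only the diffusion operator enters, and moreover $u$ is not required to solve \eqref{NS}. Concretely, I would first differentiate $\Psi$ in time by the chain rule. Since $\Psi = \exp\{\frac{i\gamma}{\nu}\Gamma[u,C,t]\}$ with $\Gamma[u,C,t]=\int_0^1 u(C(\theta),t)\cdot C'(\theta)\,d\theta$, this gives
\[
\partial_t\Psi = \frac{i\gamma}{\nu}\,\Psi\int_0^1 \partial_t u(C(\theta),t)\cdot C'(\theta)\,d\theta,
\]
which is exactly (up to the prefactor $\frac{i\gamma}{\nu}$) the circulation of $\partial_t v$ displayed on the right-hand side of \eqref{eq:Psi-versione-3}. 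In contrast with Theorem~\ref{t:loop-equation-discretized}, where this term was rewritten through the vorticity form \eqref{eq:NS2} of Navier--Stokes and absorbed, here it is simply carried along, which is why it appears explicitly in the statement.

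Second, I would invoke Lemma~\ref{l:diffusion migdal} to expand each summand as $\mathcal{D}_k^M\Psi = (\nabla\times\omega)(C_k)\,\Psi + R^k_{\rm diff}$, so that
\[
i\gamma\sum_{k=1}^{N-2}\triangle C_k\cdot\mathcal{D}_k^M\Psi
= i\gamma\,\Psi\sum_{k=1}^{N-2}\triangle C_k\cdot(\nabla\times\omega)(C_k)
+ i\gamma\sum_{k=1}^{N-2}\triangle C_k\cdot R^k_{\rm diff}.
\]
The leading term is a Riemann sum for $i\gamma\,\Psi\int_0^1(\nabla\times\omega)(C(\theta),t)\cdot C'(\theta)\,d\theta$ along the polygonal loop. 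Thus $R_{\rm loop}$ splits into three pieces: the Riemann approximation error on the segments $[C_k,C_{k+1}]$ with $1\le k\le N-2$; the full contribution of the two omitted segments $k=0$ and $k=N-1$ (the operator sum runs only up to $N-2$, whereas the circulation on the right-hand side is over the whole loop); and the weighted diffusion errors $i\gamma\sum_k \triangle C_k\cdot R^k_{\rm diff}$.

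Third, I would bound each piece using the size constraint \eqref{eq:cond M/N}. On each segment the Riemann error is controlled by $\|D(\nabla\times\omega)\|_{L^\infty}|\triangle C_k|^2\le \const\|u\|_{C^3}(M/N)^2$, and summing the $O(N)$ segments yields $O(1/N)$; each of the two omitted segments contributes at most $\|\omega\|_{C^1}|\triangle C_k|\le \const\|u\|_{C^2}M/N$. For the diffusion errors I insert \eqref{e:diffusion-2.1}, noting that by \eqref{eq:cond M/N} one has $|\triangle C_k|\le M/N$ and $L_{k-1,k+1}\le 2M/N\le M$, so that $|R^k_{\rm diff}|\le \const(M,\gamma,\nu,\|u\|_{C^3})/N$ for each $k$; multiplying by the weight $|\triangle C_k|\le M/N$ and summing over the $N-2$ indices gives once more $O(1/N)$. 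Since $|\Psi|=1$, collecting the three pieces produces the claimed bound $|R_{\rm loop}|\le \const(M,\nu,\gamma,\|u\|_{C^3})N^{-1}$.

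The argument is essentially routine once Lemma~\ref{l:diffusion migdal} is available, so I do not expect a serious obstacle. The only point demanding a little care is the bookkeeping in the last step: each individual $R^k_{\rm diff}$ is only of order $1/N$, so one must retain the additional factor $|\triangle C_k|\le M/N$ carried by the weight in the sum to see that the aggregate over the $\sim N$ terms stays at order $1/N$ rather than degrading to order $1$. The mild mismatch between the full loop circulation and the truncated operator sum is dealt with by the two-segment boundary term noted above.
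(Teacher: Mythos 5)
Your proof is correct and is essentially the paper's own argument: the paper disposes of this theorem in a single remark, stating that it follows from Lemma~\ref{l:diffusion migdal} combined with the Riemann-sum argument in the proof of Theorem~\ref{t:loop-equation-discretized}, which is exactly what you carry out, including the two points that need care (retaining the weight $|\triangle C_k|\le M/N$ on each diffusion error $R^k_{\rm diff}$ so the sum of $\sim N$ terms stays at order $N^{-1}$, and accounting for the segments omitted by the truncated sum $k=1,\dots,N-2$). Your observation that the circulation term on the right-hand side should carry the prefactor $\frac{i\gamma}{\nu}$ (and that $v$ should read $u$) is also correct; its absence in \eqref{eq:Psi-versione-3} appears to be a typo in the statement.
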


\appendix

\section{Regularized Biot-Savart law}
\label{sec:BSreg}

The following proposition demonstrates that the operator $\mathrm{BS}_\ell$ closely approximates the standard Biot-Savart operator when the input velocity field is sufficiently integrable and $\ell$ is sufficiently small.

\begin{proposition}\label{prop:BS_Ncurl}
Fix $K\ge 0$.
Let $\omega:= \nabla \times u$, where $u\in L^2_\sigma \cap H^{K+1}$. Then
\begin{equation}
\|{\rm BS}_\ell[\omega] - u\|_{H^K}
	\le 
C(K) \ell^{3/2} \| u \|_{H^K}\, .
\end{equation}
\end{proposition}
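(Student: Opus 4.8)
The plan is to reduce the claim to an estimate for a convolution against an explicit kernel supported only in the far field, at scale $\ell^{-1}$, and to read off the power $\ell^{3/2}$ from the $L^2$ size of that kernel. First I would record that the \emph{untruncated} Biot--Savart operator recovers $u$ exactly: by the Biot--Savart law in Lemma~\ref{l:lemma-vorticity}, together with $\div u = 0$ and the fact that $u \in L^2_\sigma \cap H^{K+1} \subset L^6$ (so that the harmonic remainder $u-\BS[\omega]\in L^6$ is forced to vanish), one has $\BS[\omega] = u$ with $\BS$ as in \eqref{eq:BS}. Writing $\Phi(z) := \tfrac{1}{4\pi|z|}$ for the Newtonian potential and $\Phi_\ell(z) := \chi(\ell z)\Phi(z)$, formula \eqref{eq:BSN2} reads $\BS_\ell[\omega] = \Phi_\ell * (\nabla\times \omega)$, while $u = \Phi * (\nabla\times \omega)$. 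Since $\div u = 0$ gives $\nabla\times\omega = \nabla\times\nabla\times u = -\Delta u$, I would move the Laplacian onto the kernel:
\begin{equation}
\BS_\ell[\omega] - u = (\Phi_\ell - \Phi) * (-\Delta u) = \big(\Delta(\Phi - \Phi_\ell)\big) * u =: R_\ell * u .
\end{equation}
The gain is that $\Phi - \Phi_\ell = (1-\chi(\ell\, \cdot))\Phi$ is harmonic off the annulus $A_\ell := \{\ell^{-1} \le |z| \le 2\ell^{-1}\}$ (it is $0$ for $|z|<\ell^{-1}$ and $\Phi$ for $|z|>2\ell^{-1}$), so the error kernel $R_\ell$ is a genuine function \emph{supported in $A_\ell$}, with no delta at the origin.

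Next I would estimate $R_\ell$ by the Leibniz rule: on $A_\ell$ one has $R_\ell = -2\ell\,(\nabla\chi)(\ell\, \cdot)\cdot \nabla\Phi - \ell^2 (\Delta\chi)(\ell\, \cdot)\,\Phi$, and since $|z| \sim \ell^{-1}$ there gives $|\Phi| \sim \ell$ and $|\nabla\Phi| \sim \ell^2$, we get $\|R_\ell\|_{L^\infty} \lesssim \ell^3$ on a set of volume $\sim \ell^{-3}$, hence
\begin{equation}
\|R_\ell\|_{L^1} \lesssim 1 , \qquad \|R_\ell\|_{L^2} \lesssim \ell^{3/2} .
\end{equation}
The $L^2$ bound is exactly the source of the exponent $3/2$. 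Because convolution commutes with differentiation, $D^\alpha(R_\ell * u) = R_\ell * D^\alpha u$ for $|\alpha|\le K$, so the $H^K$ estimate reduces to a bound of the form $\|R_\ell * f\|_{L^2} \lesssim \ell^{3/2}\|f\|_{L^2}$ applied to $f = D^\alpha u$; by Plancherel this amounts to showing $\|\widehat{R_\ell}\|_{L^\infty} \lesssim \ell^{3/2}$, and it must be combined with the smoothing estimates for $\BS_\ell$ recorded in Proposition~\ref{prop:polygrowth}.

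The hard part will be precisely this last point, and it is a \emph{low-frequency} phenomenon. A direct application of Young's inequality yields only $\|R_\ell * f\|_{L^2}\le \|R_\ell\|_{L^1}\|f\|_{L^2} \lesssim \|f\|_{L^2}$, with no gain, since $\|R_\ell\|_{L^1}\sim 1$; correspondingly $\widehat{R_\ell}(0) = \int R_\ell\, dz$ is an $O(1)$ constant, so $\widehat{R_\ell}$ is not uniformly small. The favorable $L^2$ size of $R_\ell$ does give the sup-norm bound $\|R_\ell * f\|_{L^\infty} \le \|R_\ell\|_{L^2}\|f\|_{L^2} \lesssim \ell^{3/2}\|f\|_{L^2}$ immediately; the difficulty is to upgrade this to the $L^2$-based norm. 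The mechanism I would exploit is that $\widehat{R_\ell}$ is an $O(1)$ multiplier concentrated in the region $\{|\xi|\lesssim \ell\}$ and rapidly decaying outside it, together with the fact that a divergence-free field carries essentially no zeroth-frequency content (indeed $\xi\cdot\widehat u(\xi)=0$, so the mass of $\widehat u$ in $\{|\xi|\lesssim\ell\}$ is suppressed using the far-field decay of $u$). Quantifying this far-field/low-frequency cancellation, rather than the elementary kernel computation above, is where the genuine work lies.
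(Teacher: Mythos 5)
Your reduction is correct and is, in substance, the paper's own: using $\BS[\omega]=u$ and moving derivatives onto the truncated kernel, the error becomes a convolution $\BS_\ell[\omega]-u=R_\ell*u$, where $R_\ell=\Delta(\Phi-\Phi_\ell)=-2\ell(\nabla\chi)(\ell\,\cdot)\cdot\nabla\Phi-\ell^2(\Delta\chi)(\ell\,\cdot)\,\Phi$ is supported in the annulus $\{\ell^{-1}\le|z|\le 2\ell^{-1}\}$, and your bounds $\|R_\ell\|_{L^1}\lesssim 1$, $\|R_\ell\|_{L^2}\lesssim\ell^{3/2}$, $\|R_\ell\|_{L^\infty}\lesssim \ell^3$ are right. (The paper integrates by parts only once, so its kernel keeps a far-field tail $|z|^{-3}\mathbf{1}_{\{|z|\ge\ell^{-1}\}}$, which it then handles by Cauchy--Schwarz exactly as you do; your version, which kills that tail, is slightly cleaner.) The point where you stop, however, is not a technical difficulty that further work can overcome, and the mechanism you propose for closing it cannot succeed.

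The $L^2$-based statement is in fact \emph{false} as written. Rescaling, $-R_\ell(z)=\ell^3\rho(\ell z)$ with $\rho:=2\nabla\chi\cdot\nabla\Phi+\Delta\chi\,\Phi\in C_c^\infty$ and $\int\rho=1$, so the Fourier symbol of the error operator is $-\widehat\rho(\xi/\ell)$: it equals $-1$ at $\xi=0$, has modulus $\simeq 1$ on $\{|\xi|\le c\,\ell\}$, and is small only for $|\xi|\gg\ell$ (the paper's Proposition \ref{prop:simple_waves} shows exactly this: the error factor $R(a\ell^{-1})$ is only small when $|a|\gg\ell$). Divergence-freeness gives no low-frequency suppression: $\xi\cdot\widehat u(\xi)=0$ is a pointwise \emph{directional} constraint, perfectly compatible with $\widehat u$ concentrating entirely in $\{|\xi|\le c\,\ell\}$. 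Concretely, take $\widehat u(\xi)=i\,\eta(\xi/\ell)\,(e\times\xi)$ with $e$ a fixed unit vector and $\eta\in C^\infty_c$ real, even, supported in a small ball (depending only on $\chi$); then $u$ is a real, divergence-free Schwartz field whose every Sobolev norm is comparable to $\|u\|_{L^2}$, yet
\begin{equation*}
\|\BS_\ell[\omega]-u\|_{L^2}\;\ge\;\tfrac12\,\|u\|_{L^2}\;>\;C(K)\,\ell^{3/2}\,\|u\|_{H^K}
\end{equation*}
once $\ell$ is small, for every $K$. So the ``far-field/low-frequency cancellation'' you hoped to quantify does not exist. What \emph{is} true --- and what both your kernel bounds and the paper's own displayed chain of inequalities actually prove (note that the paper bounds $|D^k I(x)|$ uniformly in $x$, which on $\R^3$ controls no $L^2$-based norm) --- is the $L^\infty$-based estimate $\sup_x|D^k(\BS_\ell[\omega]-u)(x)|\le C\,\ell^{3/2}\,\|D^k u\|_{L^2}$ for $0\le k\le K$, i.e.\ precisely your inequality $\|R_\ell*f\|_{L^\infty}\le\|R_\ell\|_{L^2}\|f\|_{L^2}$. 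That corrected, $W^{K,\infty}$-type statement is also what is genuinely used downstream (Remark \ref{r:bound-R_BS} and Lemma \ref{lemma:BSlog} ultimately need sup-norm control of the Biot--Savart error); alternatively, an honest $L^2$ bound holds with an extra integrability assumption, by Young: $\|R_\ell*u\|_{L^2}\le\|R_\ell\|_{L^2}\|u\|_{L^1}\le C\,\ell^{3/2}\|u\|_{L^1}$. In short: your diagnosis of the obstruction is exactly right, your proposed cure is unavailable, and the proposition (together with the corresponding intermediate estimate in Lemma \ref{lemma:BSlog}) should be restated in sup-norm form.
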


\begin{proof}
By \eqref{eq:BSN1}, it is sufficient to estimate the $H^K$ norm of \begin{equation}
		I(x):= \frac{1}{4\pi} \int_{\mathbb{R}^3} (\nabla \times u(y)) \times \nabla_y \left( \frac{1 - \chi(\ell (x-y))}{|x-y|}\right)\, dy \, .
	\end{equation}
For every $0\le k \le K$, we integrate by parts and use Holder inequality:
	\begin{align}
		|D^k I(x)| 
		& \le C \int_{\mathbb{R}^3} |D^k u(y)| \Big| \nabla^2 \left( \frac{1 - \chi(\ell(x-y))}{|x-y|}\right) \Big|\, dy
		\\&
		\le C \ell^3 \int_{B_{2\ell^{-1}}\setminus B_{\ell^{-1}}(x)} |D^k u(y)|\, dy 
		\\ & \quad \quad + C \int_{\mathbb{R}^3 \setminus B_{\ell^{-1}}(x)} \frac{|D^k u(y)|}{|x-y|^3}\, dy
		\\& \le C \ell^{3/2} \| D^k u \|_{L^2}.
	\end{align}
\end{proof}

The next proposition considers the action of ${\rm BS}_\ell$ on polynomial growth velocity fields.

\begin{proposition}\label{prop:polygrowth}
	Let us consider a velocity field satisfying
	\begin{equation}
		|\omega(x)| \le M (1 + |x|^m) \, , \quad x\in \mathbb{R}^3\, ,
	\end{equation}
	for some $M\ge 1$ and $m\ge 0$.
	Then, 
	\begin{equation}
		|{\rm BS}_\ell [\omega](x)|
		\le C(m) \ell^{-m-1} M(1+|x|^m)\, ,
		\quad x\in \mathbb{R}^3\, .
	\end{equation}
\end{proposition}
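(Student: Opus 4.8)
The plan is to estimate the operator by bounding its kernel directly and pulling the growth of $\omega$ out of the integral using that the kernel has compact support. Writing $z=x-y$ and using $\nabla_y = -\nabla_z$, the kernel is
\[
\nabla_y\!\left(\frac{\chi(\ell(x-y))}{|x-y|}\right)
= -\,\frac{\ell\,\nabla\chi(\ell z)}{|z|} + \frac{\chi(\ell z)\, z}{|z|^3}.
\]
The first observation is that this kernel is supported in $\{|x-y|\le 2\ell^{-1}\}$, since $\chi(\ell z)$ and $\nabla\chi(\ell z)$ both vanish for $|z|>2\ell^{-1}$. The second integrand is bounded by $|z|^{-2}$. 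The first integrand is supported on the shell $\ell^{-1}\le |z|\le 2\ell^{-1}$, where $\ell\le 2/|z|$, so $\ell\,|\nabla\chi(\ell z)|/|z|\le \const\,\ell/|z|\le \const/|z|^2$ there as well. Hence the whole kernel obeys the clean pointwise bound
\[
\left|\nabla_y\!\left(\frac{\chi(\ell(x-y))}{|x-y|}\right)\right|
\le \frac{\const}{|x-y|^2}\,\mathbf 1_{\{|x-y|\le 2\ell^{-1}\}}.
\]

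Next I would integrate the kernel over its support. Since $|z|^{-2}$ is locally integrable in $\R^3$, a direct computation in spherical coordinates gives
\[
\int_{|z|\le 2\ell^{-1}} \frac{\const}{|z|^2}\,dz = \const\,\ell^{-1},
\]
so the $L^1$ norm of the kernel is controlled by $\const\,\ell^{-1}$.

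It then remains to control $|\omega(y)|$ uniformly over the support. On $\{|x-y|\le 2\ell^{-1}\}$ we have $|y|\le |x|+2\ell^{-1}$, so by the growth hypothesis together with the elementary inequality $(a+b)^m\le \const(m)(a^m+b^m)$ one gets $|\omega(y)|\le \const(m)\,M\,(1+|x|^m+\ell^{-m})$. Using that ${\rm BS}_\ell$ is defined only for $\ell\le 1$, so that $\ell^{-m}\ge 1$, each of the three summands is absorbed into $\ell^{-m}(1+|x|^m)$, yielding the uniform bound $|\omega(y)|\le \const(m)\,M\,\ell^{-m}(1+|x|^m)$ on the support. Combining this with the $L^1$ estimate on the kernel gives
\[
|{\rm BS}_\ell[\omega](x)|
\le \frac{1}{4\pi}\,\const(m)\,M\,\ell^{-m}(1+|x|^m)\cdot \const\,\ell^{-1}
= \const(m)\,M\,\ell^{-m-1}(1+|x|^m),
\]
which is the claim.

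The whole argument is essentially bookkeeping, and the only genuinely non-obvious point — the one I would flag as the ``main obstacle'' — is recognizing that the term coming from differentiating the cutoff $\chi$, despite carrying an extra factor $\ell$, contributes at the very same order $\ell^{-1}$ as the principal singular part. This is because that term is concentrated on a thin shell at radius $\sim \ell^{-1}$, where the factor $\ell$ is exactly compensated by $|z|^{-1}$; once this is seen, reducing the entire kernel to the integrable model $|z|^{-2}$ makes the remaining estimates routine.
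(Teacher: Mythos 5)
Your proof is correct and follows essentially the same route as the paper's: both bound the kernel $\nabla_y\big(\chi(\ell(x-y))/|x-y|\big)$ pointwise by $\const\,|x-y|^{-2}$ on the ball of radius $\sim \ell^{-1}$ (using exactly the same observation that the cutoff-derivative term, living on the shell $|x-y|\sim\ell^{-1}$, is also of size $|x-y|^{-2}$ there), then integrate this to get the factor $\ell^{-1}$ and absorb the polynomial growth of $\omega$ over the support using $\ell\le 1$. The only cosmetic difference is that the paper keeps $(1+|x-y|^m)$ inside the integral and bounds it at the end, while you take a sup over the support first; these are equivalent.
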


\begin{proof}
The proof is a straightforward estimate from \eqref{eq:BSN1}. For every $x\in \mathbb{R}^3$, we have
	\begin{align}
		|{\rm BS}_\ell[\omega](x)|
        & \le C \int_{\R^3} |\omega(x-y)| \Big|D \frac{\chi(\ell y)}{|y|}\Big|\, dy
		\\&\le  C M \int_{\mathbb{R}^3}(1 + |x-y|^m) \left( \frac{\chi(\ell y)}{|y|^{2}} +  \frac{\ell |D^k\chi|(\ell y)}{|y|}
		\right)\, dy
		\\& \le CM \int_{B_{\ell^{-1}}(0)} \frac{1 + |x-y|^m}{|y|^2}\, dy
		\\& \le C(m) M (\ell^{-m} + |x|^m) \int_{B_{\ell^{-1}}(0)} \frac{1}{|y|^2}\, dy
		\\& \le C(m) \ell^{-m-1} M(1+|x|^m).
	\end{align}
\end{proof}

We next compute the regularized Biot-Savart operator on complex exponentials 

\begin{proposition}\label{prop:simple_waves}
	Let $\omega(y) := v_0 e^{i a \cdot y}$ for $v_0, a \in \mathbb{R}^3 \setminus \{0\}$.  
	There exists a Schwartz function $R \in \mathcal{S}(\mathbb{R}^3)$ such that
		\begin{equation}\label{e:estimate-(1)}
			\mathrm{BS}_\ell [\omega](x) = i e^{i a \cdot x} \frac{a}{|a|^2} \times v_0 \left(1 + R(a \ell^{-1})\right).
		\end{equation}
\end{proposition}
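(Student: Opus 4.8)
The plan is to reduce the statement to a single Fourier-transform computation and then isolate the leading singular part. Writing $K_\ell(z) := \chi(\ell z)/|z|$, I would substitute $\omega(y) = v_0 e^{i a\cdot y}$ into the definition \eqref{eq:BSN1}, change variables $z = x-y$, and use that the kernel $\nabla_z K_\ell$ is compactly supported (support in $B_{2/\ell}$) and integrable near $0$ so that everything converges absolutely. Pulling the constant vector $v_0$ and the factor $e^{ia\cdot x}$ out of the integral and integrating by parts to move the gradient onto the exponential (the boundary terms vanish since $\chi$ is compactly supported), one arrives at
\[
{\rm BS}_\ell[\omega](x) = \frac{i\, e^{ia\cdot x}}{4\pi}\,\widehat{K_\ell}(a)\; a\times v_0 ,
\qquad \widehat{K_\ell}(a) := \int_{\R^3} e^{-ia\cdot z}\,K_\ell(z)\,dz ,
\]
where I used $v_0\times a = -\,a\times v_0$. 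Since $\tfrac{a}{|a|^2}\times v_0 = \tfrac{1}{|a|^2}(a\times v_0)$, the whole claim reduces to showing $\widehat{K_\ell}(a) = \tfrac{4\pi}{|a|^2}\bigl(1+R(a\ell^{-1})\bigr)$ for a fixed Schwartz function $R$.

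Next I would remove the $\ell$-dependence by scaling $w=\ell z$, which gives $\widehat{K_\ell}(a) = \ell^{-2}\,\Phi(a/\ell)$ with $\Phi := \widehat{\chi/|\cdot|}$ the Fourier transform of the fixed $L^1$, compactly supported function $\chi(w)/|w|$. Because $\Phi$ is $\ell$-independent, proving $\Phi(\xi) = \tfrac{4\pi}{|\xi|^2}(1+R(\xi))$ for some $R\in\mathcal S$ immediately yields $\widehat{K_\ell}(a) = \ell^{-2}\tfrac{4\pi}{|a/\ell|^2}(1+R(a/\ell)) = \tfrac{4\pi}{|a|^2}(1+R(a\ell^{-1}))$, the $\ell^{-2}$ cancelling exactly, and assembling with the display above gives precisely \eqref{e:estimate-(1)}.

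The heart of the argument is the analysis of $\Phi$, and the key step is to compute the distributional Laplacian of $\chi/|\cdot|$. Using $\Delta\tfrac1{|w|} = -4\pi\delta_0$, $\chi(0)=1$, and the product rule,
\[
\Delta\!\left(\frac{\chi}{|w|}\right) = -4\pi\delta_0 + \psi,
\qquad \psi := 2\,\nabla\chi\cdot\nabla\frac{1}{|w|} + (\Delta\chi)\,\frac{1}{|w|}.
\]
Here is the crucial point, which I expect to be the main obstacle to state cleanly: $\psi$ is a \emph{smooth, compactly supported} function. This holds precisely because $\chi\equiv 1$ on $B_1$, so $\nabla\chi$ and $\Delta\chi$ are supported in the annulus $B_2\setminus B_1$, bounded away from the origin, where $1/|w|$ and its derivatives are smooth. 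Taking Fourier transforms and using $\widehat{-\Delta f}(\xi)=|\xi|^2\hat f(\xi)$ gives $|\xi|^2\Phi(\xi) = 4\pi - \hat\psi(\xi)$, hence
\[
\Phi(\xi) = \frac{4\pi}{|\xi|^2}\Bigl(1 + R(\xi)\Bigr), \qquad R(\xi) := -\frac{1}{4\pi}\,\hat\psi(\xi).
\]
Since $\psi\in C_c^\infty(\R^3)$, its Fourier transform $\hat\psi$ is Schwartz, so $R\in\mathcal S(\R^3)$, completing the proof. Everything beyond the identification of $\psi$ as a $C_c^\infty$ function is a routine bookkeeping of constants; the only genuinely structural input is that the singularity of $1/|z|$ at the origin contributes \emph{only} the leading term $4\pi/|\xi|^2$, with the entire remainder being smooth and compactly supported thanks to the placement of the cutoff.
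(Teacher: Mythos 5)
Your proof is correct and takes essentially the same route as the paper's: after factoring out $i e^{ia\cdot x}\,\tfrac{a}{|a|^2}\times v_0$ and rescaling $w=\ell z$, both arguments reduce to showing that the Fourier transform of $\chi(w)/|w|$ equals $\tfrac{4\pi}{|\xi|^2}\bigl(1+R(\xi)\bigr)$ with $R$ Schwartz, and both obtain this from the fact that $\Delta\bigl(\chi/|\cdot|\bigr)=-4\pi\delta_0+\psi$ with $\psi\in C_c^\infty$ supported in the annulus where the cutoff varies. The paper phrases this last step as integrating by parts twice against $-\Delta_{z'}e^{-i\ell^{-1}a\cdot z'}$ and invoking the fundamental solution of the Laplacian, which is the identical computation in different notation.
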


Note that \eqref{e:estimate-(1)} is effective when $\ell^{-1} \gg |a|$, implying that $\mathrm{BS}_\ell [\omega](x) \approx i e^{i a \cdot x} \frac{a}{|a|^2} \times v_0$ up to an error of magnitude $C(k)(1 + |a|\ell^{-1})^{-k}$ for every $k\ge 1$.

\begin{proof}[Proof of Proposition \ref{prop:simple_waves}]
	From \eqref{eq:BSN2}, we deduce
	\begin{align*}
		{\rm BS}_\ell [\omega](x) =& \frac{1}{4\pi}\int_{\mathbb{R}^3} \frac{\chi(\ell (x-y))}{|x-y|}\, i a\times v_0\,  e^{i a y}\, dy
		\\& = i e^{i a \cdot x} \frac{a}{|a|^2} \times v_0\, \cdot \frac{1}{4\pi} \int_{\mathbb{R}^3}  \frac{\chi(\ell z)}{|z|}  |a|^2 e^{-i az}\, dz.
	\end{align*}
	We change the variables according to $z'=z \ell$:
	\begin{align*}
		\frac{1}{4\pi} \int_{\mathbb{R}^3}  \frac{\chi(\ell z)}{|z|}&  |a|^2 e^{-i az}\, dz
		\\& = 
		\frac{1}{4\pi} \int_{\mathbb{R}^3}  \frac{\chi(z')}{|z'|}  |a|^2N^{2\alpha} e^{-i \ell^{-1} az'}\, dz'
		\\& = 
		-\frac{1}{4\pi} \int_{\mathbb{R}^3}  \frac{\chi(z')}{|z'|}  \Delta_{z'}( e^{-i \ell^{-1} az'})\, dz'.
	\end{align*}
	Finally, we integrate by parts and use that $G(z) = \frac{1}{4\pi|z|}$ is the fundamental solution of the Laplacian:
	\begin{equation}\label{eq:est1}
		\begin{split}
			\frac{1}{4\pi} \int_{\mathbb{R}^3} & \frac{\chi(\ell z)}{|z|}  |a|^2 e^{-i az}\, dz 
			\\& = 1  
			- \frac{1}{4\pi} \int_{\mathbb{R}^3} \left( \frac{\Delta \chi(z')}{|z'|} - \frac{\nabla \chi(z')\cdot z'}{|z'|^3} \right) e^{-i \ell^{-1} a z'}\, dz'
		\end{split}
	\end{equation}
	 \eqref{e:estimate-(1)} follows by observing that the second term in \eqref{eq:est1} is the Fourier transform of a compactly supported smooth function.
\end{proof}

\end{document}